\documentclass[letterpaper,11pt]{article}%
\usepackage[margin=1in]{geometry}
\usepackage{color, xcolor, graphicx}
\usepackage[bookmarks, colorlinks=true, plainpages = false, citecolor=
darkblue, linkcolor = chicago-maroon, anchorcolor = red, urlcolor =
darkblue]{hyperref}
\usepackage{url}
\usepackage{amsmath, amsfonts, amsthm, amssymb, bm, bbm, verbatim, dsfont,
mathtools, mathrsfs}
\usepackage{orcidlink}
\usepackage{etoolbox}
\usepackage{titlesec}
\usepackage{almendra}
\usepackage{authblk}
\usepackage{array}
\usepackage{multirow}
\usepackage{multicol}
\usepackage{float}
\usepackage{pgf,tikz}
\usepackage{tikz-cd}
\usepackage{subcaption}
\usepackage[nameinlink]{cleveref}
\usepackage[round]{natbib}
\usepackage[utf8]{inputenc}
\usepackage[T1]{fontenc}
\usepackage{nicefrac}
\usepackage{pdfpages}
\usepackage{lmodern}
\usepackage{tabu}
\usepackage{enumitem}
\usepackage{setspace}
\usepackage{bbold}
\usepackage{amsmath}
\usepackage{amsfonts}
\usepackage{amssymb}
\usepackage{graphicx}%
\providecommand{\U}[1]{\protect\rule{.1in}{.1in}}
\definecolor{chicago-maroon}{RGB}{128,0,0}
\definecolor{darkblue}{rgb}{0.0, 0.0, 0.55}
\providecommand{\keywords}[1]{\textbf{Keywords: } #1}
\usetikzlibrary{arrows,shapes.arrows,shapes.geometric,shapes.multipart,fit,automata,decorations.pathmorphing,positioning}
\tikzset{
		>=stealth',
		true/.style={
		rectangle,
		draw=black, very thick,
		text width=6.5em,
		minimum height=2em,
		text centered,
		fill=gray, opacity = 0.5},
	punkt/.style={
		rectangle,
		rounded corners,
		draw=black, very thick,
		text width=6.5em,
		minimum height=2em,
		text centered},
	est/.style={
		circle,
		draw=black, very thick,
		text centered},
	shade/.style={
		circle,
		draw=black, very thick, fill=gray!50,
		text centered},
	weight/.style={
		circle,
		draw=black, very thick,
		text width=6.5em,
		minimum height=2em,
		text centered},
		pil/.style={
		->,
		thick,
		shorten <=2pt,
		shorten >=2pt,},
	double/.style={
		<->,
		thick,
		shorten <=2pt,
		shorten >=2pt,},
	dash/.style={
		dashed,
		thick,
		shorten <=2pt,
		shorten >=2pt,},
	dashdouble/.style={
		<->,
		dashed,
		thick,
		shorten <=2pt,
		shorten >=2pt,}
}
\makeatletter 
\newcolumntype{C}[1]{>{\centering\arraybackslash}p{#1}}

\def\E{\mathsf{E}}
\def\var{\mathsf{var}}
\def\cov{\mathsf{cov}}
\def\mse{\mathsf{mse}}

\renewcommand{\hat}{\widehat}

\newcommand{\myreferences}{Master.bib}
\theoremstyle{plain}
\newtheorem{theorem}{Theorem}
\newtheorem{lemma}{Lemma}

\newtheorem{corollary}{Corollary}

\theoremstyle{definition}
\newtheorem{definition}{Definition}

\newtheorem{assumption}{Assumption}

\newtheorem{remark}{Remark}

\newtheorem*{remark*}{Remark}
\def\bbP{\mathbb{P}}

\def\sfE{\mathsf{E}}

\def\bbR{\mathbb{R}}
\def\bbU{\mathbb{U}}
\def\calP{\mathcal{P}}
\def\calH{\mathcal{H}}
\def\bias{\mathsf{bias}}

\def\independenT#1#2{\mathrel{\rlap{$#1#2$}\mkern2mu{#1#2}}}
\def\op{\mathrm{op}}
\def\I{\mathrm{I}}
\def\Tr{\mathsf{Tr}}
\def\diff{\mathrm{d}}

\def\eps{\epsilon}

\def\mse{\mathsf{mse}}
\def\sfPi{\mathsf{\Pi}}
\def\SA{\mathrm{SA}}
\def\AL{\mathrm{AL}}

\begin{document}

\title{On the Asymptotic Inadmissibility of Double Machine Learning Estimators Under Structure-Agnostic Models}

\author[1]{Lin Liu\thanks{Email: \href{linliu@sjtu.edu.cn}{linliu@sjtu.edu.cn}.}\orcidlink{0000-0002-9883-7962}}

\author[2]{Rajarshi Mukherjee\thanks{Email: \href{ram521@mail.harvard.edu}{ram521@mail.harvard.edu}.}\orcidlink{0000-0002-5761-8958}}

\author[3]{James M. Robins\thanks{Email: \href{robins@hsph.harvard.edu}{robins@hsph.harvard.edu}. R Mukherjee's research is supported by NSF CAREER Award 8529216-01. The authors are grateful for the programme \emph{Causal inference: From theory to practice and back again} held in the Isaac Newton Institute of Mathematical Sciences, Cambridge, during which this work is completed. The current version of the paper strengthens the pointwise asymptotic (in)admissibility results of the original arXiv version to  uniform.}\orcidlink{0000-0001-6609-209X}}

\affil[1]{Institute of Natural Sciences, MOE-LSC, School of Mathematical Sciences, SJTU-Yale Joint Center for Biostatistics and Data Science, Shanghai Jiao Tong University}

\affil[2]{Department of Biostatistics, Harvard T. H. Chan School of Public Health}

\affil[3]{Department of Epidemiology and Department of Biostatistics, Harvard T. H. Chan School of Public Health}

\date{}
\maketitle

\vspace{-2em}

\begin{abstract}
Structure-agnostic (SA) models introduced by \citet{balakrishnan2026fundamental} aim to reflect the general lack of knowledge of structural assumptions on data-generating laws such as smoothness or sparsity in practice. Roughly speaking, SA models restrict the observed-data generating law to be in some $r_{n}$-neighborhood of (black-box machine learning) estimates, treated as given and fixed, where $r_{n}$ encodes the convergence rates of the estimates to the truth. Under SA models, \citet{balakrishnan2026fundamental} show that the popular Double Machine Learning (DML) estimators for three functionals, the quadratic functional in the Gaussian sequence model, the quadratic density integral functional and the expected conditional covariance, are minimax. However, minimax estimators may be inadmissible. In this paper, we show that, for the first two of the three functionals, the DML estimator is (uniformly) asymptotically inadmissible under the SA model. In particular, we show that these two functionals fall into a class of functionals, which we refer to as the \emph{monotone bias class}. For this class, we exhibit second-order ($U$-statistic) estimators, which asymptotically dominate DML estimators under the SA model. These second-order estimators are empirical higher-order influence function (HOIF) estimators introduced in \citet{liu2017semiparametric}. Furthermore, both the DML estimator and the second-order estimator are minimax for the expected conditional covariance, though neither asymptotically dominates the other. We also compare the SA model with the assumption-lean (AL) model of \citet{liu2020nearly, liu2024assumption} that imposes no assumptions beyond the trivial and empirically untestable hypothesis that the bias of any estimator may be of order $1$. As a consequence, under the AL model, a Wald confidence interval centered at a DML estimator may under-cover. \citet{liu2024assumption} introduced a class of valid tests that can falsify, for functionals in the \emph{monotone bias class}, the hypothesis that a DML-estimator-centered Wald interval covers the truth at its nominal level or greater. However, our tests are not consistent under the AL model, because no consistent tests exist \citep{robins1997toward}. Finally, for any functional with the mixed bias property of \citet{rotnitzky2021characterization}, such as the expected conditional covariance or the average treatment effect \citep{jin2025structure}, the above tests can falsify the hypothesis of \emph{rate-double-robustness}.
\end{abstract}

\keywords{Foundations of Statistics, Higher-Order Influence Functions, Structure-Agnostic Models, Assumption-Lean Inference, Minimaxity, (In)admissibility}

\onehalfspacing
\allowdisplaybreaks
\nopagebreak

\section{Introduction}
\label{sec:intro}

In scientific disciplines such as epidemiology, clinical medicine and economics, one of the most important statistical tasks is to infer from the observed data a low-dimensional, smooth functional $\psi (\theta)$ of the underlying data-generating law $\bbP_{\theta}$ posited to belong to a statistical model denoted by
\begin{align*}
\calP \equiv \calP (\Theta) \coloneqq \left\{ \bbP_{\theta}: \theta \in \Theta \right\}.
\end{align*}
Here we parameterize the data generating law by $\theta \in \Theta$. Without an essential loss of generality, we take $\psi \equiv \psi (\theta) \in \bbR$. Throughout this paper, we let $n$ denote the sample size and use $\psi$ and $\theta$ to denote the true values, which should cause no confusion.

Many examples of $\psi$, such as the average treatment effect under ignorability, are of substantive interest in practice. To avoid model misspecification bias, it is natural to take $\Theta$ to be high- or even infinite-dimensional and estimate $\theta$ nonparametrically by kernels or series in classical statistics. In terms of $\psi$, it has become common practice to construct the so-called Double Machine Learning (DML) estimator $\hat{\psi}_{1, n} \equiv \hat{\psi}_{1, n} (\hat{\theta})$ based on the first-order influence function of $\psi$ \citep{newey1990semiparametric, scharfstein1999adjusting, ai2003efficient, chernozhukov2018double, shi2026and}. Owing to the curse-of-dimensionality, uniformly consistent estimators exist neither for $\theta$ nor for $\psi$ without any additional assumptions on $\Theta$ \citep{stone1980optimal, stone1982optimal, ritov1990achieving, robins1997toward}. For this reason, structural assumptions, traditionally in terms of smoothness or sparsity, are often imposed on $\Theta$ to obtain uniformly consistent estimators of $\psi$ that converge to $\psi$ at parametric rates.


Recently, \citet{balakrishnan2026fundamental} introduced the \emph{structure-agnostic} (SA) models, a new class of submodels of $\calP$ that do not impose traditional structural assumptions on $\Theta$ such as smoothness or sparsity. The SA model, denoted as $\calP_{\SA} (\hat{\theta}, r_{n})$, is parameterized by a pair of indices $(\hat{\theta}, r_{n})$, where $\hat{\theta}$ is an initial estimator of $\theta$ treated as fixed and independent of the randomness of the data, and $r_{n}$ indicates convergence rates that are nonincreasing functions of $n$. Concretely, suppose that $\theta = (\theta_{1}, \cdots, \theta_{J})^{\top}$ and $r_{n} = (r_{n, 1}, \cdots, r_{n, J})^{\top}$ have $J$ components. Then $\calP_{\SA} (\hat{\theta}, r_{n})$ is generally defined as\footnote{In certain problems, we have $\theta_{j_{1}} \equiv \theta_{j_{2}}$ and $r_{n, j_{1}} \equiv r_{n, j_{2}}$. As is standard in the literature, in such a case, we use the same estimator $\hat{\theta}_{j_{1}} \equiv\hat{\theta}_{j_{2}}$ in computing $\hat{\psi}_{1, n}$. We will discuss the implications of this choice in the examples in Sections~\ref{sec:gsm}--~\ref{sec:ecc}; specifically, see Remarks~\ref{rem:cf1} and~\ref{rem:cf2}.}
\begin{equation}
\calP_{\SA} (\hat{\theta}, r_{n}) := \left\{  \bbP_{\theta} \in \calP: \Vert \hat{\theta}_{j} - \theta_{j} \Vert^{2} \leq r_{n, j}, j = 1, \cdots, J \right\}. \label{SA model}
\end{equation}
In other words, $\calP_{\SA} (\hat{\theta}, r_{n})$ contains the subset of all possible $\bbP_{\theta}$'s such that each component $\theta_{j}$ is contained in the corresponding $\sqrt{r_{n, j}}$-$\Vert \cdot \Vert$-neighborhood of a given initial estimator $\hat{\theta}_{j}$. For all the concrete examples in this paper (see Sections~\ref{sec:gsm}--\ref{sec:ecc}), we effectively take $r_{n, j}$ to be some large enough constant $R^{\ast}$ for $j > 2$ so assumptions are only imposed over at most two components of $\theta$.

The SA model $\calP_{\SA} (\hat{\theta}, r_{n})$ has several notable features. First, it does not impose explicit complexity reducing structural assumptions such as smoothness or sparsity\footnote{Even if the true smoothness or sparsity class were known, due to the theory-practice gap \citep{adcock2021gap, xu2022deepmed, chen2024causal}, when $\theta$ is estimated by modern deep neural networks, the SA model is still relevant because the smoothness/sparsity assumption alone fails to determine the properties of $\hat{\theta}$ or $\hat{\psi}_{1,n}$.}.
Second, \citet{balakrishnan2026fundamental} showed that the first-order DML estimator $\hat{\psi}_{1,n}$ of $\psi$ attains optimal convergence rates in the minimax sense under the SA model $\calP_{\SA} (\hat{\theta}, r_{n})$, for several concrete examples of $\psi$, including the quadratic functional in the Gaussian sequence model, the quadratic density integral functional (with the extra condition $r_{n}^{2} \gtrsim n^{-1}$ for these two examples; see Theorem~\ref{thm:high level} for explanation), and the expected conditional covariance. More recent follow-up papers \citep{jin2025structure, bonvini2024doubly, jin2025sharp, gu2026optimally} establish the minimaxity of $\hat{\psi}_{1,n}$ under the SA model for the average treatment effect, the average treatment effect on the treated, and other related parameters. Furthermore, the estimator $\hat{\psi}_{1, n}$ is the same and minimax in $\calP_{\SA} (\hat{\theta}, r_{n})$ for all values of $r_{n}$. Some analysts have used the minimaxity of $\hat{\psi}_{1, n}$ under the SA model $\calP_{\SA} (\hat{\theta}, r_{n})$ as a justification of the increasing use of $\hat{\psi}_{1, n}$ in (bio)statistics and econometrics.

\subsection*{Our Contribution}

Our main technical contribution of this paper is related to the decision-theoretic properties of estimators, tracing back to the classical work of Abraham Wald \citep{wald1941principles, wald1945statistical, wald1947essentially}. Wald is renowned as the inventor of the minimax principle \citep{wald1945statistical}, arguably the most popular theoretical paradigm used to measure the quality of an estimator by modern (bio)statisticians and econometricians \citep{brown1994minimaxity, andrews2021model, adusumilli2026sample} and adopted in \citet{balakrishnan2026fundamental}.

However, certain minimax estimators may be inadmissible. One celebrated example of the difference between minimaxity and (in)admissibility is Stein's paradox, asserting that the maximum likelihood estimator (MLE), although minimax, is everywhere dominated in mean squared error (MSE) for every sample size $n$ by the James-Stein (JS) estimator in the many-normal-means model of dimension at least three \citep{stein1956inadmissibility, james1961estimation, brown1971admissible}. Hence, the MLE is inadmissible. In this paper, we will show that, under $\calP_{\SA} (\hat{\theta}, r_{n})$, there exists a class of parameters $\psi$, which we refer to as the \emph{monotone bias class}, for which higher-order influence function (HOIF) estimators \citep{robins2008higher, robins2016technical, liu2017semiparametric} dominate the first-order DML estimator $\hat{\psi}_{1, n}$ in the large-$n$ limit whenever $\prod_{j = 1}^{J} r_{n, j} \gtrsim n^{-1}$ (see Theorem~\ref{thm:high level} for the precise statement). That is, we show that in the $\calP_{\SA} (\hat{\theta}, r_{n})$ model, the mimimax DML estimator is asymptotically inadmissible (in a uniform sense), when estimating a parameter in the \emph{monotone bias class}. To make the above claims precise, we next define: \emph{(uniformly) asymptotic (in)admissibility} and the \emph{monotone bias class} of functionals. We shall see that two out of the three functionals studied in \citet{balakrishnan2026fundamental} are in the \emph{monotone bias class}.

\begin{definition}[(Uniform) asymptotic (in)admissibility]\label{def:admissible} 
An estimator sequence $\{\psi_{n}\}$ of $\psi (\theta)$ indexed by $n$ is said to be (uniformly) \emph{asymptotically inadmissible} in scaled MSE loss under model $\calP_{\SA} (\hat{\theta}, r_{n})$ if there exists another estimator sequence $\{\psi_{n}'\}$ of $\psi (\theta)$ also indexed by $n$ such that the following hold. For every $\eps > 0$, there exists a sufficiently large natural number $N$, which may depend on $\eps$, such that as long as $n \geq N$,
\begin{equation}
\label{clauses}
\sup_{\bbP_{\theta} \in \calP_{\SA} (\hat{\theta}, r_{n})} \Delta_{n} (\psi_{n}', \psi_{n}; \theta) \leq \eps \text{ and } \inf_{\bbP_{\theta} \in \calP_{\SA} (\hat{\theta}, r_{n})} \Delta_{n} (\psi_{n}', \psi_{n}; \theta) < 0,
\end{equation}
where $\Delta_{n} (\psi_{n}', \psi_{n}; \theta) := \frac{\mse (\psi_{n}^{\prime}) - \mse (\psi_{n})}{\mse (\psi_{n})}$, $\mse (\cdot) \equiv \mse_{\theta} (\cdot) \coloneqq \E_{\theta} (\cdot - \psi (\theta))^{2}$. If otherwise, we say that $\{\psi_{n}\}$ is \emph{(uniformly) asymptotically admissible}.
\end{definition}

In the above definition, the difference in MSEs is scaled. Because the MSE of a reasonable estimator should decay to zero under the large $n$ limit, obtaining nontrivial results requires an appropriate scaling. Here, we choose the MSE of $\psi_{n}$ as a natural scaling factor. We refer the interested readers to Appendix~\ref{app:scale} for a more elaborate discussion on the choice of the denominator. Throughout the paper, we give a rigorous $\eps$-$\delta$ treatment to all technical results. In the text, however, we still use the convenient asymptotic notation such as $\gtrsim, \lesssim, \gg, \ll, o (\cdot), O (\cdot)$.


The following definition of the \emph{monotone bias class} is different from, but as explained below, is essentially equivalent to that in our previous work \citep{liu2020nearly}.

\begin{definition}
\label{def:mbc} 
Let $\bias (\hat{\psi}_{1, n})$ and $\var (\hat{\psi}_{1, n})$ be, respectively, the bias and variance of the first-order DML estimator $\hat{\psi}_{1, n}$ of $\psi$. $\psi$ is said to be in the \emph{monotone bias class} if there exists a sufficiently large natural number $N$ such that for every $\bbP_{\theta} \in \calP_{\SA} (\hat{\theta}, r_{n})$, as long as $n \geq N$, the following hold.
\begin{enumerate}
[label = (\arabic*)]
\item $\vert \bias (\hat{\psi}_{1, n}) \vert \leq C \prod_{j = 1}^{J} r_{n, j}$ for some universal constant $C > 0$ independent of $(\bbP_{\theta}, \hat{\theta}, r_{n})$, there always exists another estimator, denoted by $\hat{\psi}_{2, n}$, such that $\frac{|\bias (\hat{\psi}_{2, n})|}{|\bias (\hat{\psi}_{1, n})|} - 1 \leq 0$, and the inequality becomes strict at some law in $\calP_{\SA} (\hat{\theta}, r_{n})$;

\item The variances of $\hat{\psi}_{1, n}$ and $\hat{\psi}_{2, n}$ satisfy the following condition: there exists constants $v > 0$ and $C' > 0$ and a sequence $k$ with $k / n \leq C' \eps$ such that $\var (\hat{\psi}_{1, n}) = v / n$ and
\begin{equation}
\var (\hat{\psi}_{2, n}) - \var (\hat{\psi}_{1, n}) \leq C' \Big( \frac{\sqrt{|\bias (\hat{\psi}_{1, n})|}}{n} + \frac{k}{n^{2}} \Big). \label{var bound}
\end{equation}
\end{enumerate}
\end{definition}

Before moving forward, we briefly comment on the statistical implication of the above two definitions. The discussion here is in a similar spirit to Remark~1.7 of \citet{liu2020nearly}. Unlike textbook definition of (in)admissibility, we resort to asymptopia in Definition~\ref{def:admissible}. We mention in passing that \citet{read1972asymptotic} adopted a pointwise definition of asymptotic inadmissibility, whereas our definition is about uniform asymptotics. Definition~\ref{def:mbc} offers additional structures to the type of functionals, for which we can make sharp mathematical statement. Combined with Definition~\ref{def:admissible}, in the most pessimistic scenario (when $\hat{\psi}_{2, n}$ fails to reduce any bias), if one is willing to pay at most a small price $\eps > 0$ (depending on sample size $n$) due to the potentially slightly inflated variance of $\hat{\psi}_{2, n}$, one can gain a lot in certain optimistic cases in $\calP_{\SA} (\hat{\theta}, r_{n})$, in which $\hat{\psi}_{2, n}$ can have a much reduced bias. We are now ready to state the following general theorem regarding the asymptotic inadmissibility of the DML estimator $\hat{\psi}_{1, n}$, making the above argument rigorous. A proof is given following a few remarks afterwards. \setcounter{theorem}{-1}

\begin{theorem}
\label{thm:high level} 
For $\psi$ belonging to the \emph{monotone bias class}, under the SA model $\calP_{\SA} (\hat{\theta}, r_{n})$, an estimator $\hat{\psi}_{2, n}$ different from $\hat{\psi}_{1, n}$ satisfies the following property. For every $\eps > 0$, there exists a sufficiently large natural number $N$ that may depend on $\eps$, such that as long as $n \geq N$, (i) $\sup_{\bbP_{\theta} \in \calP_{\SA} (\hat{\theta}, r_{n})} \Delta_{n} (\hat{\psi}_{2, n}, \hat{\psi}_{1, n}; \theta) \leq \eps$ for any $r_{n}$, and (ii) $\inf_{\bbP_{\theta} \in \calP_{\SA} (\hat{\theta}, r_{n})} \Delta_{n} (\hat{\psi}_{2, n}, \hat{\psi}_{1, n}; \theta) < 0$ if $\prod_{j = 1}^{J} r_{n, j} \geq C / n$ for some absolute constant $C > 0$. Hence, in view of Definition~\ref{def:admissible}, $\hat{\psi}_{1, n}$ is asymptotically inadmissible in this case.
\end{theorem}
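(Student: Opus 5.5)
We take $\hat{\psi}_{2, n}$ to be the estimator supplied by clause (1) of Definition~\ref{def:mbc} and work with the bias--variance decomposition $\mse = \bias^{2} + \var$. Write $b_{j} := \bias (\hat{\psi}_{j, n})$ and $V_{j} := \var (\hat{\psi}_{j, n})$ for $j = 1, 2$. Then
\begin{equation*}
\Delta_{n} (\hat{\psi}_{2, n}, \hat{\psi}_{1, n}; \theta) = \frac{(b_{2}^{2} - b_{1}^{2}) + (V_{2} - V_{1})}{b_{1}^{2} + v / n}.
\end{equation*}
Clause (1) gives $|b_{2}| \leq |b_{1}|$ at every law in $\calP_{\SA} (\hat{\theta}, r_{n})$, so the first term in the numerator is always nonpositive. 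All the work therefore goes into controlling the variance term against the denominator.

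\textbf{Part (i).} Drop the nonpositive bias term and apply \eqref{var bound}:
\begin{equation*}
\Delta_{n} \leq \frac{C' \left( \sqrt{|b_{1}|} / n + k / n^{2} \right)}{b_{1}^{2} + v / n}.
\end{equation*}
The $k$-term is at most $C' (k / n^{2}) / (v / n) = C' k / (v n) \leq C'^{2} \eps / v$, using $k / n \leq C' \eps$. For the $\sqrt{|b_{1}|}$-term, bound the denominator below by $v / n$. This gives at most $C' \sqrt{|b_{1}|} / v \leq C' \sqrt{C \prod_{j} r_{n, j}} / v$.

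I expect this last bound to be the main obstacle, since it needs $|b_{1}| \to 0$ uniformly. If the rates $r_{n, j}$ decay, it holds for $n \geq N(\eps)$. If they do not, I would instead use the other side of the denominator. For $|b_{1}|$ not small, the denominator is at least $b_{1}^{2}$, so the ratio is $O\big( |b_{1}|^{-3/2} / n \big) \to 0$. For $|b_{1}| \leq \delta$, the ratio is $\leq C' \sqrt{\delta} / v$. Choosing $\delta$ to depend on $\eps$ and then $N$ large handles both cases uniformly over the model. Since every constant is universal, i.e.\ independent of $\bbP_{\theta}$ by Definition~\ref{def:mbc}, the supremum is at most a constant multiple of $\eps$; rescaling $\eps$ gives (i).

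\textbf{Part (ii).} Pick a law $\bbP_{\theta^{\ast}}$ in the model at which the bias inequality in clause (1) is strict. Ideally this is a law where $|b_{2}|$ is much smaller than $|b_{1}|$, say $|b_{2}| \leq |b_{1}| / 2$, and where $|b_{1}| \asymp \prod_{j} r_{n, j}$. The monotone bias examples in later sections provide such laws: take $\theta$ at the boundary of the SA neighbourhood, aligned with the span of the basis where the HOIF correction removes bias. Under the hypothesis $\prod_{j} r_{n, j} \geq C / n$, the bias reduction satisfies $b_{1}^{2} - b_{2}^{2} \gtrsim b_{1}^{2} \gtrsim 1 / n^{2}$ at minimum. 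The variance penalty is at most $C' \big( \sqrt{|b_{1}|} / n + k / n^{2} \big)$.

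To make the numerator negative, compare the gain $b_{1}^{2}$ against $\sqrt{|b_{1}|} / n$. This requires $|b_{1}|^{3/2} \gg 1 / n$, which is delicate if $|b_{1}|$ is only of order $n^{-1}$. The plan is to exploit the freedom in the law: within the SA ball, take $|b_{1}|$ as large as the model allows, of order $\prod_{j} r_{n, j}$. Choose the constant $C$ in the hypothesis large relative to $C'$, and choose $k$ (e.g.\ $k = o(n)$) so that the $k / n^{2}$ term is negligible. Where the inequality on orders is tight, I would use the exact variance expressions from the $U$-statistic form of $\hat{\psi}_{2, n}$. Those expressions show that the variance increment is actually of order $k / n^{2}$ plus $|b_{1}| / n$-type terms, which are dominated by the bias gain.

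Then $\Delta_{n} < 0$ at $\theta^{\ast}$, so the infimum is negative. Combined with (i), Definition~\ref{def:admissible} yields the asymptotic inadmissibility of $\hat{\psi}_{1, n}$.
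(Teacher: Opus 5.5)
Your Part (i) is correct. The case split on $|b_{1}|$ is an explicit version of $\sup_{x > 0} \sqrt{x} / (n x^{2} + v) = \frac{3}{4 v} (\frac{v}{3 n})^{1/4} \to 0$, which the paper uses implicitly.

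The gap is in Part (ii), and it is one of scale. You give the witness law $|b_{1}| \asymp \prod_{j} r_{n, j}$, so the hypothesis yields only $|b_{1}| \gtrsim C / n$ and a gain $b_{1}^{2} - b_{2}^{2} \lesssim C^{2} / n^{2}$. The only variance control you have, \eqref{var bound}, then allows an increment of size $C' \sqrt{C}\, n^{-3/2} + C' k / n^{2}$. This beats $C^{2} / n^{2}$ for large $n$ however big the fixed $C$ is. Moreover, $C$ is handed to you by the hypothesis, not chosen by you. Retreating to the exact $U$-statistic variances leaves the abstract setting of the theorem and still fails, for two reasons:
\begin{itemize}
\item The increment is not merely $k / n^{2}$ plus $|b_{1}| / n$-type terms. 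Lemma~\ref{lem:quad gsm 2}, for instance, contains a cross term proportional to $n^{-1} \langle \sfPi_{k} \hat{\theta}, \sfPi_{k} (\hat{\theta} - \theta) \rangle$, of order $\Vert \hat{\theta} \Vert_{2} \sqrt{|b_{1}|} / n$.
\item Even if you kill that term by orienting $\theta - \hat{\theta}$, the $4 k / n^{2}$ term exceeds $C^{2} / n^{2}$ once $k \to \infty$.
\end{itemize}

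What is missing is that the DML bias at the witness law scales like $\sqrt{\prod_{j} r_{n, j}}$, not like $\prod_{j} r_{n, j}$. Definition~\ref{def:mbc}(1) is written with $\prod_{j} r_{n, j}$, but the scaling the proof and every example actually use is the square-root one. Remark~\ref{rem:high} states $|\bias (\hat{\psi}_{1, n})|^{2} \lesssim \prod_{j} r_{n, j}$, and in Section~\ref{sec:gsm}, $|b_{1}| = \Vert \hat{\theta} - \theta \Vert_{2}^{2}$ reaches $r_{n}$ while $\prod_{j} r_{n, j} = r_{n}^{2}$. The paper's proof takes a law with $b_{2} = 0$ and $|b_{1}| \geq \sqrt{C / n}$, i.e.\ $n b_{1}^{2} \geq C$. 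An example is $\theta = \hat{\theta} + r_{n}^{1/2} \upsilon$ with $\Vert \upsilon \Vert_{2} = 1$ supported on the first $k$ coordinates. Then \eqref{var bound} alone suffices:
\begin{equation*}
\Delta_{n} \leq \frac{C' \sqrt{|b_{1}|}}{n b_{1}^{2} + v} + \frac{C' k / n + v}{C + v} - 1 .
\end{equation*}
The first term is $O (n^{-1/4})$ uniformly in $b_{1}$. The remainder equals $-(C - C' k / n) / (C + v)$, which is negative and bounded away from $0$ once $C' k / n \leq C / 2$. Hence $\Delta_{n} < 0$ at that law for all large $n$ and every fixed $C > 0$, with no example-specific variance formulas needed. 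Note that the bare strict inequality in Definition~\ref{def:mbc}(1) would not suffice here, since it guarantees no quantitative bias reduction.
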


It should also be noted that Theorem~\ref{thm:high level} does not discuss minimaxity of either estimator. As we shall see, for two examples in the \emph{monotone bias class} -- the quadratic functional in the Gaussian sequence model in Section~\ref{sec:gsm} and the quadratic density integral functional in Section~\ref{sec:quad}, the minimaxity of $\hat{\psi}_{1, n}$ or $\hat{\psi}_{2, n}$ requires an additional condition $\sqrt{\prod_{j = 1}^{J} r_{n, j}} \gtrsim n^{-1}$. As explained in \citet{balakrishnan2026fundamental}, when $\sqrt{\prod_{j = 1}^{J} r_{n, j}} \ll n^{-1}$, the so-called plug-in estimators dominate both $\hat{\psi}_{1, n}$ and $\hat{\psi}_{2, n}$ in these two examples, because the plug-in estimators have zero variance and squared bias $\sqrt{\prod_{j = 1}^{J} r_{n, j}} \ll n^{-1}$, while both $\hat{\psi}_{1, n}$ and $\hat{\psi}_{2, n}$ have variances of order $n^{-1}$. However, as also noted by \citet{balakrishnan2026fundamental}, $\sqrt{\prod_{j = 1}^{J} r_{n, j}} \ll n^{-1}$ generally does not hold if the sample size used to estimate $\hat{\theta}$ is of the same order as $n$. Therefore, there is essentially no loss of generality if we exclude the case where $\sqrt{\prod_{j = 1}^{J} r_{n, j}} \ll n^{-1}$ holds, as we do in Sections~\ref{sec:gsm} and \ref{sec:quad}, in which case both $\hat{\psi}_{1, n}$ and $\hat{\psi}_{2, n}$ are minimax rate-optimal.

\begin{remark}
\label{rem:high}
In all the examples covered in this paper and in \citet{balakrishnan2026fundamental}, $|\bias (\hat{\psi}_{1, n})|^{2}$ is upper bounded by $\prod_{j = 1}^{J} r_{n, j}$ times a constant. In view of Theorem~\ref{thm:high level}, when $\psi$ is in the \emph{monotone bias class}, $\hat{\psi}_{1, n}$ is \emph{asymptotically inadmissible} and is uniformly asymptotically dominated by $\hat{\psi}_{2, n}$ when $|\bias (\hat{\psi}_{1, n})| \gtrsim n^{-1 / 2}$. When $|\bias (\hat{\psi}_{1, n})| \ll n^{-1/2}$, $\hat{\psi}_{2, n}$ is asymptotically never worse than $\hat{\psi}_{1, n}$ and both are minimax. In particular, when DML estimators $\hat{\psi}_{1, n}$ are deployed in practice, it is often implicitly assumed that $\prod_{j = 1}^{J} r_{n, j} \ll n^{-1}$ holds, a condition often referred to as the \emph{rate-double-robustness} when $J = 2$. Under this condition, neither estimator dominates the other; further, $|\bias (\hat{\psi}_{1, n})| \ll n^{-1 / 2}$ automatically holds, and thus both $\hat{\psi}_{1, n}$ and $\hat{\psi}_{2, n}$ can be used to construct an asymptotically valid Wald confidence interval (CI) of length $O (n^{-1 / 2})$, which is common practice, although other non-Wald CI constructions are also under rapid development \citep{wasserman2020universal, zheng2025perturbed}.
\end{remark}

Since in practice $r_{n}$ is unknown and the possibility that it is of order $1$ cannot be empirically excluded \citep{robins1997toward, ritov2014bayesian}, we consider the following assumption-lean model.
\begin{definition}
\label{def:al} 
Given a constant $R^{\ast} > 0$, we define the model $\calP_{\AL} (\hat{\theta}) \equiv \calP_{\SA} (\hat{\theta}, r_{n} = (R^{\ast}, \cdots, R^{\ast}))$ as the assumption-lean model \citep{liu2024assumption}.
\end{definition}

Under the assumption-lean model, for parameters in the \emph{monotone bias class}, it follows from Theorem~\ref{thm:high level} that $\hat{\psi}_{2, n}$ dominates $\hat{\psi}_{1, n}$ asymptotically, because $\prod_{j = 1}^{J} r_{n, j} = 1 \gg n^{-1}$, and thus $\hat{\psi}_{1, n}$ is asymptotically inadmissible.

In fact, as discussed in Section~\ref{sec:sa} below, we can say more. Given a parameter $\psi$ in the \emph{monotone bias class}, we can construct an asymptotically level-$\alpha$ falsification test of the null hypothesis $\calH_{0}$: $\bias (\hat{\psi}_{1, n}) \ll n^{-1/2}$ that, when $\calH_{0}$ is rejected, provides empirical evidence for the alternative hypothesis $|\bias (\hat{\psi}_{1, n})| \gtrsim n^{-1/2}$ and thus also empirical evidence that the Wald CI centered on $\hat{\psi}_{1, n}$ under-covers even in large samples \citep{liu2020nearly}. However, by the aforementioned results of \citet{robins1997toward} and \citet{ritov2014bayesian}, any such test must be inconsistent under the assumption-lean model $\calP_{\AL} (\hat{\theta})$. Hence, failure to reject does not provide evidence for or against the null hypothesis $\calH_{0}$ even asymptotically. We return to this issue in the concluding section of the paper (Section~\ref{sec:sa}).

\begin{remark}
\label{rem:intuition}
It will be clear in later sections that all examples studied by \citet{balakrishnan2026fundamental} are in the \emph{monotone bias class} defined in Definition~\ref{def:mbc}, except for the expected conditional covariance (see Section~\ref{sec:ecc}). The two parts of conditions in Definition~\ref{def:mbc} need further elaboration. Part (1) states that as long as $n$ surpasses a universal threshold $N$, the bias of $\hat{\psi}_{2, n}$ is never greater than but sometimes strictly smaller than that of $\hat{\psi}_{1, n}$. Part (2) says that the difference between the variances of $\hat{\psi}_{1, n}$ and $\hat{\psi}_{2, n}$ is essentially of order $O (n^{-1})$ with the hidden constant scaled with the root bias of $\hat{\psi}_{1, n}$. The original definition of the \emph{monotone bias class} in \citet{liu2020nearly} contains only part (1) but part (2) was implicit. As described later, the alternative estimator $\hat{\psi}_{2, n}$ is a second-order $U$-statistic (heretofore referred to as the second-order estimators for simplicity), constructed via the theory of HOIFs \citep{robins2008higher, robins2016technical, liu2017semiparametric}. Such second-order estimators satisfy both Parts (1) and (2). The difference between the variances of $\hat{\psi}_{2, n}$ and $\hat{\psi}_{1, n}$ can be bounded as follows, as is the case in all our examples in later sections:
\begin{align*}
\var (\hat{\psi}_{2, n}) - \var (\hat{\psi}_{1, n}) \lesssim \frac{k}{n^{2}} + \frac{|\bias (\hat{\psi}_{1, n})| - |\bias (\hat{\psi}_{2, n})|}{n} + \frac{v^{1 / 2} \{|\bias (\hat{\psi}_{1, n})| - |\bias (\hat{\psi}_{2, n})|\}^{1 / 2}}{n},
\end{align*}
where $k$ is a tuning parameter chosen so that $k = o (n)$. It is then not difficult to verify that Part (2) of Definition~\ref{def:mbc} holds. In terms of Part (1), the second-order estimator $\hat{\psi}_{2, n}$ can be viewed as debiasing $\hat{\psi}_{1, n}$ by estimating a part of $\bias (\hat{\psi}_{1, n})$; also see comments after Lemma~\ref{lem:quad gsm 2}, \ref{lem:quad2}, and \ref{lem:ecc2}, and Theorem~\ref{thm:ecv}. For functionals outside the \emph{monotone bias class} such as the expected conditional covariance functional covered in Section~\ref{sec:ecc}, $\hat{\psi}_{2, n}$ is still rate-optimal but may have a bias exceed that of $\hat{\psi}_{1, n}$ under certain laws in $\calP_{\SA} (\hat{\theta}, r_{n})$ .
\end{remark}

With these ingredients, the proof of Theorem~\ref{thm:high level} is almost immediate by elementary calculations, so we record the proof here.

\begin{proof}[Proof of Theorem~\ref{thm:high level}]
Take $N$ to be the large enough natural number in Definition~\ref{def:mbc}, from which the result follows directly. To see this, we first decompose the scaled MSE difference as
\begin{align*}
\frac{\mse (\hat{\psi}_{2, n}) - \mse (\hat{\psi}_{1, n})}{\mse (\hat{\psi}_{1, n})} = - \frac{\bias (\hat{\psi}_{1, n})^{2} - \bias^{2} (\hat{\psi}_{2, n})}{\bias (\hat{\psi}_{1, n})^{2} + \var (\hat{\psi}_{1, n})} + \frac{\var (\hat{\psi}_{2, n}) - \var (\hat{\psi}_{1, n})}{\bias (\hat{\psi}_{1, n})^{2} + \var (\hat{\psi}_{1, n})}.
\end{align*}
Let $n \geq N$. Then we always have $\bias (\hat{\psi}_{1, n})^{2} - \bias^{2} (\hat{\psi}_{2, n}) \geq 0$ by Definition~\ref{def:mbc}(1). Next, by Definition~\ref{def:mbc}(2),
\begin{align*}
& \ \frac{\mse (\hat{\psi}_{2, n}) - \mse (\hat{\psi}_{1, n})}{\bias (\hat{\psi}_{1, n})^{2} + \var (\hat{\psi}_{1, n})} \leq \frac{\sqrt{\bias (\hat{\psi}_{1, n})} + \frac{k}{n}}{n \bias (\hat{\psi}_{1, n})^{2} + v} \leq \eps.
\end{align*}

When $\prod_{j = 1}^{J} r_{n, j} \geq C / n$ for some constant $C > 0$, there must exist a law such that $\bias (\hat{\psi}_{1, n}) \geq \sqrt{C / n}$ and $\bias (\hat{\psi}_{2, n}) = 0$. Then we have
\begin{align*}
\frac{\mse (\hat{\psi}_{2, n}) - \mse (\hat{\psi}_{1, n})}{\bias (\hat{\psi}_{1, n})^{2} + \var (\hat{\psi}_{1, n})} & \leq \frac{\sqrt{\bias (\hat{\psi}_{1, n})} + \frac{k}{n} - n \bias (\hat{\psi}_{1, n})^{2}}{n \bias (\hat{\psi}_{1, n})^{2} + v} = \frac{\sqrt{\bias (\hat{\psi}_{1, n})} + \frac{k}{n} + v}{n \bias (\hat{\psi}_{1, n})^{2} + v} - 1 \\
& \leq \frac{\sqrt{\bias (\hat{\psi}_{1, n})}}{n \bias (\hat{\psi}_{1, n})^{2} + v} + \frac{\frac{k}{n} + v}{C + v} - 1 < 0,
\end{align*}
which completes the proof.
\end{proof}

\section*{Notation and Organization}

Throughout this paper, we always let $C > 0$ denote a sufficiently large constant independent of the sample size $n$. For any functions mentioned in the paper, they are understood to be squared integrable with respect to the Lebesgue measure. $\bbU_{n, m} [\cdot]$ denotes a $m$-th order $U$-statistic operator. Given a collection of $k$ different functions $\bar{f}_{k} = (f_{1}, \cdots, f_{k})^{\top}$, we let $\mathsf{\Pi}_{\bbP} (\cdot \mid \bar{f}_{k})$ denote the operator of $L^{2} (\bbP)$-projection onto the linear span of $\bar{f}_{k}$ and $\Vert \cdot \Vert_{2, \bbP}$ denote the $L^{2} (\bbP)$-norm. If $\bbP$ is the Lebesgue measure, we omit the subscript and write $\mathsf{\Pi} (\cdot \mid \bar{f}_{k})$ and $\Vert \cdot \Vert_{2}$ for short. $\Vert\cdot\Vert_{2}$ also denotes the $\ell^{2}$-norm of a vector. We denote the population Gram matrix of $\bar{f}_{k}$ under the distribution $\bbP$ as $\Sigma_{\bbP, \bar{f}_{k}} \coloneqq \E [\bar{f}_{k} (X) \bar{f}_{k} (X)^{\top}]$. When it is clear from the context, we omit the dependence in the subscript on $\bbP$ or $\bar{f}_{k}$ or both.

The remainder of this paper makes Theorem~\ref{thm:high level} concrete. Sections~\ref{sec:gsm}--\ref{sec:ecc} cover four examples of $\psi$, one of which does not belong to the \emph{monotone bias class}. Theorem~\ref{thm:high level} will then be specialized for the three examples in the \emph{monotone bias class}. Section~\ref{sec:sa} concludes the paper by making some additional comments on the relevance of the SA model to practitioners who are more interested in uncertainty quantification or statistical inference. Proofs are deferred to the Appendix.

\section{Quadratic Functional in the Gaussian Sequence Model}
\label{sec:gsm}

As in \citet{balakrishnan2026fundamental}, we observe data drawn from the infinite Gaussian sequence model:
\begin{align}
\label{gsm}
Y_{i} = \theta_{i} + \varepsilon_{i}, i = 1, 2, \cdots
\end{align}
where $\{\varepsilon_{i}, i = 1, 2, \cdots\} \overset{\mathrm{i.i.d.}}{\sim} \mathrm{N} (0, n^{-1})$. Let $\theta := \{\theta_{i}, i = 1, 2, \cdots\}$ and $Y := \{Y_{i}, i = 1, 2, \cdots\}$. We are interested in learning about the quadratic functional
\begin{align}
\label{gsm:quad}
Q (\theta) := \Vert \theta \Vert_{2}^{2} \equiv \sum_{i = 1}^{\infty} \theta_{i}^{2}.
\end{align}

The SA model corresponding to $\psi(\theta)$ is defined by \citet{balakrishnan2026fundamental} as:
\begin{align}
\label{quad gsm model}
\calP_{\SA} ((\hat{\theta}, \hat{\theta}), (r_{n}, r_{n})) \coloneqq \left\{ \bbP_{\theta}: \Vert \hat{\theta} - \theta \Vert_{2}^{2} \leq r_{n} \right\} ,
\end{align}
where $\hat{\theta}$ is some initial estimator of $\theta$. It is noteworthy that we deliberately write $\hat{\theta}$ and $r_{n}$ twice in the notation $\calP_{\SA} ((\hat{\theta}, \hat{\theta}), (r_{n}, r_{n}))$ to emphasize that we take $J = 2$ and $\prod_{j = 1}^{J} r_{n, j} = r_{n}^{2}$ in this case. In the sequel, however, we write $\calP_{\SA} (\hat{\theta}, r_{n})$ instead in the text to simplify the notation. We adopt a similar convention for the quadratic density integral functional in Section~\ref{sec:quad} and the expected conditional variance in Section~\ref{sec:ecc}.

\citet{balakrishnan2026fundamental} obtained the following results.

\begin{lemma}
\label{lem:quad gsm} 
The following hold. Suppose that there exists a universal constant $C > 0$ such that $r_{n} \geq C / n$,
\begin{align*}
\mathfrak{R}_{n} (Q; \calP_{\SA} (\hat{\theta}, r_{n})) \coloneqq \inf_{\hat{Q}} \sup_{\bbP_{\theta} \in\calP_{\SA} (\hat{\theta}, r_{n})} \E_{\theta} [(\hat{Q} - Q (\theta))^{2}] \geq C \Big( r_{n}^{2} + \frac{\Vert \hat{\theta} \Vert_{2}^{2}}{n} \Big).
\end{align*}
This lower bound is attained by the first-order estimator $\hat{Q}_{1, n}$, defined as
\begin{align*}
\hat{Q}_{1, n} \coloneqq 2 \langle Y, \hat{\theta} \rangle - \Vert\hat{\theta} \Vert_{2}^{2}.
\end{align*}
The bias, variance, and mean squared error (MSE) of $\hat{Q}_{1, n}$ have the following forms:
\begin{align*}
\bias (\hat{Q}_{1, n}) = - \Vert\hat{\theta} - \theta\Vert_{2}^{2}, \var (\hat{Q}_{1, n}) = \frac{4}{n} \Vert \hat{\theta} \Vert_{2}^{2}, \text{ and } \mse (\hat{Q}_{1, n}) = \Vert \hat{\theta} - \theta \Vert_{2}^{4} + \frac{4}{n} \Vert \hat{\theta} \Vert_{2}^{2}.
\end{align*}
\end{lemma}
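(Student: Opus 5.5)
The lemma has two parts: exact bias/variance/MSE formulas for $\hat{Q}_{1, n}$ and attainment, and a minimax lower bound. The formulas are a direct computation, so I do them first. Write $Y = \theta + \varepsilon$ with $\varepsilon_{i} \overset{\mathrm{i.i.d.}}{\sim} \mathrm{N}(0, n^{-1})$. Then
\begin{align*}
\hat{Q}_{1, n} - Q(\theta) = 2 \langle \theta, \hat{\theta} \rangle - \Vert \hat{\theta} \Vert_{2}^{2} - \Vert \theta \Vert_{2}^{2} + 2 \langle \varepsilon, \hat{\theta} \rangle = - \Vert \hat{\theta} - \theta \Vert_{2}^{2} + 2 \langle \varepsilon, \hat{\theta} \rangle .
\end{align*}
Because $\hat{\theta}$ is treated as fixed and square summable, $\langle \varepsilon, \hat{\theta} \rangle$ is a well-defined $\mathrm{N}(0, \Vert \hat{\theta} \Vert_{2}^{2}/n)$ variable; the series converges in $L^{2}$. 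This gives the stated bias $-\Vert \hat{\theta} - \theta \Vert_{2}^{2}$ and variance $4\Vert\hat{\theta}\Vert_{2}^{2}/n$, and the MSE is their sum. Taking the supremum over the SA model $\Vert \hat{\theta} - \theta\Vert_{2}^{2} \le r_{n}$ yields $\sup \mse(\hat{Q}_{1,n}) \le r_{n}^{2} + 4\Vert\hat{\theta}\Vert_{2}^{2}/n$. Hence $\hat{Q}_{1, n}$ attains the claimed lower bound up to constants.

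For the lower bound, I treat the two terms separately and take the maximum, since $\max(a,b) \ge (a+b)/2$.

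The variance term $\Vert\hat{\theta}\Vert_{2}^{2}/n$ comes from a two-point argument (Le Cam). Take $\theta_{0} = \hat{\theta}$ and $\theta_{1} = (1+\delta)\hat{\theta}$ with $\delta = c/(\sqrt{n}\,\Vert\hat{\theta}\Vert_{2})$.
\begin{itemize}
\item Membership in the model: $\Vert \theta_{1} - \hat{\theta} \Vert_{2}^{2} = c^{2}/n \le r_{n}$ when $c^{2} \le C$, using the assumption $r_{n} \ge C/n$.
\item Indistinguishability: the KL divergence between the two Gaussian sequence laws is $n\Vert\theta_{1}-\theta_{0}\Vert_{2}^{2}/2 = c^{2}/2$, which is bounded.
\item Separation: $|Q(\theta_{1}) - Q(\theta_{0})| = (2\delta + \delta^{2})\Vert\hat{\theta}\Vert_{2}^{2} \gtrsim \Vert\hat{\theta}\Vert_{2}/\sqrt{n}$.
\end{itemize}
Le Cam's lemma then gives a risk of at least $c'\Vert\hat{\theta}\Vert_{2}^{2}/n$. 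If $\Vert\hat{\theta}\Vert_{2} = 0$, this term is trivial.

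The $r_{n}^{2}$ term is the main obstacle. It needs a fuzzy-hypotheses (mixture versus mixture) argument, in the spirit of Ingster's lower bound for quadratic functionals. The signal-detection approach alone gives only $r_{n}^{2}$ when $r_{n}\lesssim 1/\sqrt{n}$, so the high-dimensional structure of $\ell^{2}$ must be exploited; the structure-agnostic setting lets us use arbitrarily many coordinates. The construction I plan is as follows.
\begin{itemize}
\item Choose $m$ coordinates outside the (approximate) support of $\hat{\theta}$, or orthogonal directions, with $m \to \infty$ to be tuned.
\item Under $H_{0}$, set $\theta = \hat{\theta} + \sqrt{r_{n}}\, u$, where $u$ is uniform on the sphere in the span of those $m$ directions. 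Every such $\theta$ lies in the model and has $Q = \Vert\hat{\theta}\Vert_{2}^{2} + r_{n}$.
\item Under $H_{1}$, set $\theta = \hat{\theta} + \sqrt{r_{n}/2}\, u$, so that $Q = \Vert\hat{\theta}\Vert_{2}^{2} + r_{n}/2$. The two hypotheses are therefore separated by $r_{n}/2$ in $Q$.
\end{itemize}
It remains to bound the $\chi^{2}$ (or total variation) distance between the two mixtures. For $\mathrm{N}(\cdot, n^{-1}I_{m})$ mixtures over spheres of radii $\rho_{0}, \rho_{1}$, I will bound it via the Gaussian-prior surrogate, namely coordinates i.i.d. $\mathrm{N}(0,\rho^{2}/m)$. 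With that prior the mixtures are $\mathrm{N}(0,(n^{-1}+\rho^{2}/m)I_{m})$, and their $\chi^{2}$ distance is of order $m(n\rho^{2}/m)^{2} = n^{2}r_{n}^{2}/m$. This is $o(1)$ once $m \gg n^{2}r_{n}^{2}$, which is permissible because the dimension is infinite.

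The Gaussian prior only places $\theta$ in the model with high probability, not with certainty. I will handle this by a standard truncation argument: concentration of $\Vert u\Vert_{2}^{2}$ around its mean, and conditioning on the good event, which changes the total variation by $o(1)$ and shifts $Q$ by a negligible amount. With these pieces, the fuzzy-hypotheses version of Le Cam (Tsybakov, Thm.~2.15) gives risk $\gtrsim r_{n}^{2}$, completing the lower bound.
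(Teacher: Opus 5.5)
Your proof is correct in substance and self-contained. The paper gives no argument of its own: it imports the lower bound from Theorem~1 (Part~1) and the upper bound from Theorem~2 (Part~1) of \citet{balakrishnan2026fundamental}. The bias/variance identities are the routine computation you give, and the variance is redone at the start of the proof of Lemma~\ref{lem:quad gsm 2}.

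What your route adds is a clear division of labour between the two terms. The two-point move $\theta_1=(1+\delta)\hat{\theta}$ is the only place where $r_n\geq C/n$ enters: it is needed to fit an $n^{-1/2}$-sized perturbation inside the model. If you take the perturbation of size $\min\{\sqrt{r_n},c/\sqrt{n}\}$ instead, you get $\Vert\hat{\theta}\Vert_2^2\min\{r_n,1/n\}$ in general. This is consistent with the paper's remark that the plug-in estimator wins when $r_n\ll 1/n$. By contrast, the $r_n^2$ term needs no condition on $r_n$ at all, because the perturbation can be spread over $m\to\infty$ directions orthogonal to $\hat{\theta}$.

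One step needs recalibrating. With your radii $\rho_0^2=r_n$ and $\rho_1^2=r_n/2$, the Gaussian surrogate for $H_0$ gives $\Vert\theta-\hat{\theta}\Vert_2^2=r_n\chi^2_m/m$. This exceeds $r_n$ with probability tending to $1/2$, so conditioning on membership in the model can cost about $1/2$ in total variation, not $o(1)$ as you claim. This is harmless: shrink the radii (e.g.\ $\rho_0^2=r_n/2$, $\rho_1^2=r_n/4$, conditioning on $\vert\Vert\theta-\hat{\theta}\Vert_2^2/\rho_j^2-1\vert\leq\epsilon$), or note that the fuzzy-hypotheses bound only needs total variation at most some $\alpha<1$, so $1/2+o(1)$ still suffices.

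You can also drop the surrogate and the truncation altogether. Test the point mass at $\hat{\theta}$ against $\theta=\hat{\theta}+\sqrt{r_n/m}\sum_{j\leq m}\lambda_j v_j$, with i.i.d.\ Rademacher $\lambda_j$ and orthonormal $v_j\perp\hat{\theta}$. Every such $\theta$ lies in the model with $Q(\theta)=\Vert\hat{\theta}\Vert_2^2+r_n$ exactly. Moreover, $1+\chi^2=\cosh(nr_n/m)^m\leq\exp\{n^2r_n^2/(2m)\}\to 1$ as $m\to\infty$, and Le Cam's two-point (point versus mixture) bound then gives risk $\gtrsim r_n^2$ directly.
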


The lower and upper bounds can be found in Theorem~1, Part~1 and Theorem~2, Part~1 of \citet{balakrishnan2026fundamental}, respectively. These results, taken together, prove the rate optimality of $\hat{Q}_{1, n}$ in the minimax sense.

To show the asymptotic inadmissibility of the minimax estimator $\hat{Q}_{1, n}$, we need to exhibit a different estimator that improves upon $\hat{Q}_{1, n}$. To this end, we adopt the following second-order estimator appeared in \cite{robins2006adaptive}:
\begin{align*}
\hat{Q}_{2, n} (k) & := \sum_{i = 1}^{k} Y_{i, 1} Y_{i, 2} + \sum_{i = k+1}^{\infty} (2 Y_{i}\hat{\theta}_{i} - \hat{\theta}_{i}^{2}) \\
& \equiv \hat{Q}_{1, n} + \sum_{i = 1}^{k} (Y_{i, 1} Y_{i, 2} - 2 Y_{i} \hat{\theta}_{i} + \hat{\theta}_{i}^{2}),
\end{align*}
where $Y_{i, 1} \coloneqq Y_{i} + \Phi^{-1} (U_{i}) / \sqrt{n}$, $Y_{i,2} \coloneqq Y_{i} - \Phi^{-1} (U_{i}) / \sqrt{n}$, $\Phi$ is the standard normal cumulative distribution function and $U_{i}$'s are independent uniform random variables over $[0,1]$. Here $Y_{i, 1} \mathpalette{\protect\independenT}{\perp} Y_{i, 2}$. The difference between $\hat{Q}_{2, n}(k)$ and $\hat{Q}_{1, n}$ is an unbiased estimator of $\Vert \mathsf{\Pi}_{k} (\theta - \hat{\theta}) \Vert_{2}^{2}$, where $\mathsf{\Pi}_{k} (\cdot)$ denotes the projection onto the first $k$ coordinates of the input infinite-dimensional vector, with $\mathsf{\Pi}_{k}^{\perp} (\cdot)$ naturally meaning the projection onto the $(k+1)$-th coordinate and onward.

The following lemma characterizes the bias, variance, and mean squared error of $\hat{Q}_{2, n} (k)$. A proof can be found in Appendix~\ref{app:lem:quad gsm 2}.

\begin{lemma}
\label{lem:quad gsm 2} 
The bias and variance of $\hat{Q}_{2, n} (k)$ under $\bbP_{\theta} \in \calP_{\SA} (\hat{\theta}, r_{n})$ are characterized as follows:
\begin{align*}
\bias (\hat{Q}_{2, n} (k)) & = - \Vert \mathsf{\Pi}_{k}^{\perp} (\hat{\theta} - \theta) \Vert_{2}^{2} \lesssim r_{n}, \\
\var (\hat{Q}_{2, n} (k)) & = \frac{4}{n} \Vert \hat{\theta} \Vert_{2}^{2} + \frac{4 k}{n^{2}} + \frac{4}{n} \Vert \mathsf{\Pi}_{k} (\hat{\theta} - \theta) \Vert_{2}^{2} - \frac{4}{n} \langle \mathsf{\Pi}_{k} \hat{\theta}, \mathsf{\Pi}_{k} (\hat{\theta} - \theta) \rangle \lesssim \frac{1}{n},
\end{align*}
where the inequalities hold for $\bbP_{\theta} \in \calP_{\SA} (\hat{\theta}, r_{n})$.
\end{lemma}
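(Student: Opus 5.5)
The plan is a direct moment computation that exploits independence across coordinates, in two steps: first the splitting device, then the first two moments of each summand.

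\textbf{Step 1: the splitting.} Write $Z_{i} \coloneqq \Phi^{-1} (U_{i}) / \sqrt{n}$. Then $Z_{i} \sim \mathrm{N} (0, n^{-1})$, independently of $Y$ and across $i$. Hence $(Y_{i, 1}, Y_{i, 2}) = (Y_{i} + Z_{i}, Y_{i} - Z_{i})$ is bivariate Gaussian with
\begin{align*}
\mathrm{cov} (Y_{i, 1}, Y_{i, 2}) = \var (Y_{i}) - \var (Z_{i}) = 0 .
\end{align*}
So $Y_{i, 1}$ and $Y_{i, 2}$ are independent, each distributed as $\mathrm{N} (\theta_{i}, 2 / n)$. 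Moreover, the summands of $\hat{Q}_{2, n} (k)$ are mutually independent across $i$. Consequently the bias and the variance both decompose coordinatewise.

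\textbf{Step 2a: the bias.} For $i \leq k$, independence gives $\E [Y_{i, 1} Y_{i, 2}] = \theta_{i}^{2}$, so these coordinates contribute no bias. For $i > k$,
\begin{align*}
\E [2 Y_{i} \hat{\theta}_{i} - \hat{\theta}_{i}^{2}] = 2 \theta_{i} \hat{\theta}_{i} - \hat{\theta}_{i}^{2} = \theta_{i}^{2} - (\hat{\theta}_{i} - \theta_{i})^{2} .
\end{align*}
Summing over $i > k$ gives $\bias (\hat{Q}_{2, n} (k)) = - \Vert \mathsf{\Pi}_{k}^{\perp} (\hat{\theta} - \theta) \Vert_{2}^{2}$. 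Its absolute value is at most $\Vert \hat{\theta} - \theta \Vert_{2}^{2} \leq r_{n}$ on $\calP_{\SA} (\hat{\theta}, r_{n})$. In particular $|\bias (\hat{Q}_{2, n} (k))| \leq |\bias (\hat{Q}_{1, n})|$ by Lemma~\ref{lem:quad gsm}, which is Part (1) of Definition~\ref{def:mbc}.

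\textbf{Step 2b: the variance.} For $i \leq k$, independence of the two factors gives
\begin{align*}
\var (Y_{i, 1} Y_{i, 2}) = (\theta_{i}^{2} + 2/n)^{2} - \theta_{i}^{4} = \frac{4 \theta_{i}^{2}}{n} + \frac{4}{n^{2}} .
\end{align*}
For $i > k$, $\var (2 Y_{i} \hat{\theta}_{i}) = 4 \hat{\theta}_{i}^{2} / n$. Summing,
\begin{align*}
\var (\hat{Q}_{2, n} (k)) = \frac{4}{n} \Vert \mathsf{\Pi}_{k} \theta \Vert_{2}^{2} + \frac{4 k}{n^{2}} + \frac{4}{n} \Vert \mathsf{\Pi}_{k}^{\perp} \hat{\theta} \Vert_{2}^{2} .
\end{align*}
To reach the stated form, expand $\theta_{i}^{2} = \hat{\theta}_{i}^{2} + (\hat{\theta}_{i} - \theta_{i})^{2} - 2 \hat{\theta}_{i} (\hat{\theta}_{i} - \theta_{i})$ on the first $k$ coordinates. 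This yields $\frac{4}{n} \Vert \hat{\theta} \Vert_{2}^{2} + \frac{4k}{n^{2}} + \frac{4}{n} \Vert \mathsf{\Pi}_{k} (\hat{\theta} - \theta) \Vert_{2}^{2}$ plus a cross term in $\frac{1}{n} \langle \mathsf{\Pi}_{k} \hat{\theta}, \mathsf{\Pi}_{k} (\hat{\theta} - \theta) \rangle$.

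\textbf{Main obstacle: the cross-term constant.} The expansion above gives coefficient $-8/n$ on the cross term, not the $-4/n$ written in the statement. I would recheck this step carefully; if the discrepancy persists, I would record the $-8/n$ version. Either way it does not affect the order bound or the use of Definition~\ref{def:mbc}(2).

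\textbf{Step 3: the order bound $\lesssim 1/n$.} This follows once $\Vert \hat{\theta} \Vert_{2}$ is bounded, $r_{n} \lesssim 1$, and $k \lesssim n$. The cross term is controlled by Cauchy--Schwarz:
\begin{align*}
|\langle \mathsf{\Pi}_{k} \hat{\theta}, \mathsf{\Pi}_{k} (\hat{\theta} - \theta) \rangle| \leq \Vert \hat{\theta} \Vert_{2} \sqrt{r_{n}} .
\end{align*}
The same bound, together with the fact that $\Vert \mathsf{\Pi}_{k} (\hat{\theta} - \theta) \Vert_{2}^{2}$ equals the bias reduction $|\bias (\hat{Q}_{1, n})| - |\bias (\hat{Q}_{2, n} (k))|$, produces the variance-difference bound of Remark~\ref{rem:intuition}.
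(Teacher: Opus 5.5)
Your proposal is correct, and the discrepancy you flagged is genuine. Commit to the $-8/n$ version rather than hedging. The identity
\[
\var (\hat{Q}_{2, n} (k)) = \frac{4}{n} \Vert \mathsf{\Pi}_{k} \theta \Vert_{2}^{2} + \frac{4 k}{n^{2}} + \frac{4}{n} \Vert \mathsf{\Pi}_{k}^{\perp} \hat{\theta} \Vert_{2}^{2} = \frac{4}{n} \Vert \hat{\theta} \Vert_{2}^{2} + \frac{4 k}{n^{2}} + \frac{4}{n} \Vert \mathsf{\Pi}_{k} (\hat{\theta} - \theta) \Vert_{2}^{2} - \frac{8}{n} \langle \mathsf{\Pi}_{k} \hat{\theta}, \mathsf{\Pi}_{k} (\hat{\theta} - \theta) \rangle
\]
is right, and the stated coefficient $-4/n$ is wrong. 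A quick check: take $k \geq 1$, $\theta = 0$ and $\hat{\theta} = c\, e_{1}$. The estimator reduces to $\sum_{i \leq k} (Y_{i}^{2} - Z_{i}^{2})$, whose variance is exactly $4k/n^{2}$. The stated formula instead gives $4c^{2}/n + 4k/n^{2}$.

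Your route also differs from the paper's. The paper writes $\hat{Q}_{2, n} (k) = \hat{Q}_{1, n} + (\hat{Q}_{2, n} (k) - \hat{Q}_{1, n})$ and computes three pieces:
\begin{itemize}
\item $\var (\hat{Q}_{1, n}) = 4 \Vert \hat{\theta} \Vert_{2}^{2} / n$, which is correct;
\item $\var (\hat{Q}_{2, n} (k) - \hat{Q}_{1, n}) = 4k/n^{2} + 4 \Vert \mathsf{\Pi}_{k} (\hat{\theta} - \theta) \Vert_{2}^{2} / n$, which is also correct;
\item the cross covariance, where the error occurs.
\end{itemize}
In the cross covariance it writes $2 \cov (\langle Y, \hat{\theta} \rangle, \cdot)$, dropping the factor $2$ in $\hat{Q}_{1, n} = 2 \langle Y, \hat{\theta} \rangle - \Vert \hat{\theta} \Vert_{2}^{2}$. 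The correct cross term is $\frac{8}{n} \sum_{j \leq k} \hat{\theta}_{j} (\theta_{j} - \hat{\theta}_{j})$, which reproduces your answer.

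The paper's decomposition has one virtue: it exhibits the variance inflation $\var (\hat{Q}_{2, n} (k)) - \var (\hat{Q}_{1, n})$ directly in the form used in Definition~\ref{def:mbc}(2). But the cross covariance is exactly where it goes wrong. Your coordinatewise computation never needs that term and gives a form that is easy to sanity-check.

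As you note, nothing downstream changes. By Cauchy--Schwarz the cross term is at most $\frac{8}{n} \Vert \hat{\theta} \Vert_{2} \Vert \mathsf{\Pi}_{k} (\hat{\theta} - \theta) \Vert_{2}$. So the $\lesssim 1/n$ bound, Definition~\ref{def:mbc}(2), and the variance-difference bound in Remark~\ref{rem:intuition} all hold with adjusted constants.
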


We note that $\Vert \mathsf{\Pi}_{k} (\hat{\theta} - \theta) \Vert_{2}^{2} \equiv \bias (\hat{Q}_{1, n}) - \bias (\hat{Q}_{2, n} (k))$ so $\hat{Q}_{2, n} (k)$ corrects the bias of $\hat{Q}_{1, n}$ by estimating a lower bound of $\bias (\hat{Q}_{1, n}) \lesssim r_{n}$. By Lemma~\ref{lem:quad gsm 2}, $Q (\theta)$ belongs to the \emph{monotone bias class}. Comparing $\mse (\hat{Q}_{2, n} (k))$ and $\mse (\hat{Q}_{1, n})$ in the asymptotic sense, we obtain the first main statistical result of this paper. There always exists a distribution in $\calP_{\SA} (\hat{\theta}, r_{n})$ such that Definition~\ref{def:mbc}(1) holds. To see this, consider the case where $\theta = \hat{\theta} + r_{n}^{1 / 2} \upsilon$, where $\Vert \upsilon \Vert_{2} = 1$ and the coordinates of $\upsilon$ from $k + 1$ onward are all zeros. With this choice, $\bias (\hat{Q}_{2, n} (\bar{\phi}_{k})) = 0$. The rest of the proof can be found in Appendix~\ref{app:quad gsm}.

\begin{theorem}
\label{thm:quad gsm} 
Choosing $k = o (n)$. Under Model $\calP_{\SA} (\hat{\theta}, r_{n})$ with $r_{n} \geq C / n$ for some absolute constant $C > 0$, $\hat{Q}_{2, n} (k)$ is \emph{asymptotically minimax}. The same conclusions as in Theorem~\ref{thm:high level} hold by replacing $\psi (\theta)$ as $Q (\theta)$, $\hat{\psi}_{1, n}$ as $\hat{Q}_{1, n}$, and $\hat{\psi}_{2, n}$ as $\hat{Q}_{2, n} (k)$. The same conclusions also hold when we replace the SA model $\calP_{\SA} (\hat{\theta}, r_{n})$ with the assumption-lean model $\calP_{\AL} (\hat{\theta})$ and drop the assumptions on $r_{n}$.
\end{theorem}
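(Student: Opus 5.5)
Once the exact bias and variance formulas of Lemma~\ref{lem:quad gsm} and Lemma~\ref{lem:quad gsm 2} are in hand, the argument reduces to comparing explicit MSE expressions. Write $\delta := \hat{\theta} - \theta$, $a := \Vert \mathsf{\Pi}_{k} \delta \Vert_{2}^{2}$ and $b := \Vert \mathsf{\Pi}_{k}^{\perp} \delta \Vert_{2}^{2}$, so that $a + b = \Vert \delta \Vert_{2}^{2} \leq r_{n}$. Then
\begin{align*}
\mse (\hat{Q}_{2, n} (k)) - \mse (\hat{Q}_{1, n}) = b^{2} - (a + b)^{2} + \frac{4k}{n^{2}} + \frac{4a}{n} - \frac{4}{n} \langle \mathsf{\Pi}_{k} \hat{\theta}, \mathsf{\Pi}_{k} \delta \rangle ,
\end{align*}
and the denominator is $\mse (\hat{Q}_{1, n}) = (a+b)^{2} + 4 \Vert \hat{\theta} \Vert_{2}^{2} / n$. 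Here the bias term $b^{2} - (a+b)^{2} = -a^{2} - 2ab$ is nonpositive, which gives Definition~\ref{def:mbc}(1).

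For the variance increment in Definition~\ref{def:mbc}(2), I would apply Cauchy--Schwarz: $|\langle \mathsf{\Pi}_{k} \hat{\theta}, \mathsf{\Pi}_{k} \delta \rangle| \leq \Vert \hat{\theta} \Vert_{2} \sqrt{a}$. This reproduces the bound displayed in Remark~\ref{rem:intuition} with $v = 4 \Vert \hat{\theta} \Vert_{2}^{2}$. I would treat $\Vert \hat{\theta} \Vert_{2}$ as a fixed positive constant, since $\hat{\theta}$ is fixed.

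Part (i) of the theorem then follows from
\begin{align*}
\Delta_{n} \leq \frac{4k/n + 4a + 4 \Vert \hat{\theta} \Vert_{2} \sqrt{a} - n a^{2}}{n (a+b)^{2} + 4 \Vert \hat{\theta} \Vert_{2}^{2}} ,
\end{align*}
where the term $-2nab$ has been dropped because it is nonpositive. The term $4k/n$ is $o(1)$ since $k = o(n)$. The remaining expression $4a + 4 \Vert \hat{\theta} \Vert_{2} \sqrt{a} - n a^{2}$ is what I expect to be the main obstacle, because it must be bounded uniformly over the model rather than pointwise. For $a \leq r_{n}$ I would maximize $g(a) = 4 \Vert \hat{\theta} \Vert_{2} \sqrt{a} - n a^{2}$ over $a \geq 0$. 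The maximizer is of order $n^{-2/3}$, and the maximum is of order $n^{-1/3}$, hence $o(1)$. Likewise $4a \leq 4 r_{n}$, which is $o(1)$ in the SA regime, where I will assume $r_{n} \to 0$ as is implicit in the model. In the AL case, where $r_{n}$ is only bounded by $R^{\ast}$, I would instead combine $4a \leq 4(a+b)$ with the $n a^{2}$ term: when $a \gg 1/\sqrt{n}$ the negative term $-n a^{2}$ dominates $4a$, and otherwise $4a = o(1)$. A case split at $a = n^{-1/2}$ should make this uniform. Dividing by a denominator bounded below by $4 \Vert \hat{\theta} \Vert_{2}^{2} > 0$ then gives $\sup \Delta_{n} \leq \eps$ for $n \geq N(\eps)$.

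For part (ii), I would use the witness law mentioned before the theorem: $\theta = \hat{\theta} + r_{n}^{1/2} \upsilon$ with $\Vert \upsilon \Vert_{2} = 1$ and $\upsilon$ supported on the first $k$ coordinates, so that $a = r_{n}$ and $b = 0$. To neutralize the cross term, I would choose the sign of $\upsilon$ so that $\langle \mathsf{\Pi}_{k} \hat{\theta}, \mathsf{\Pi}_{k} \delta \rangle \geq 0$. The numerator is then at most $4k/n + 4 r_{n} - n r_{n}^{2}$. Since $r_{n} \geq C / n$, we have $n r_{n}^{2} \geq C r_{n}$, which I want to exceed $4 r_{n} + 4k/n$. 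This may require taking $C$ large, or else $r_{n} \gg n^{-1}$ together with $k / n \ll n r_{n}^{2}$, and I would tune $k$ accordingly, e.g. $k = o(n^{2} r_{n}^{2}) \wedge o(n)$. In the AL model $r_{n} = R^{\ast}$, so $n r_{n}^{2} \to \infty$ and the inequality is immediate. Strict negativity gives $\inf \Delta_{n} < 0$.

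Finally, asymptotic minimaxity holds because the sup of the MSE is at most $r_{n}^{2} + O(1/n) + O(k/n^{2})$. This matches the lower bound $C ( r_{n}^{2} + \Vert \hat{\theta} \Vert_{2}^{2} / n )$ in Lemma~\ref{lem:quad gsm} up to constants, given $k = o(n)$.
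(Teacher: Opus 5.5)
Your argument is the paper's argument made explicit. The paper's proof notes that Lemma~\ref{lem:quad gsm 2} verifies Definition~\ref{def:mbc} and then invokes the generic computation in the proof of Theorem~\ref{thm:high level}. It uses the same witness law $\theta = \hat{\theta} + r_{n}^{1/2}\upsilon$ with $\upsilon$ supported on the first $k$ coordinates, and reads minimaxity off the bias and variance orders. In part (i) you absorb the $\sqrt{a}$ cross term into the bias gain $-na^{2}$ and bound the denominator below by $4\Vert\hat{\theta}\Vert_{2}^{2}$. The paper instead absorbs $\sqrt{|\bias(\hat{\psi}_{1,n})|}$ into the denominator $n\,\bias(\hat{\psi}_{1,n})^{2} + v$. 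Both work.

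Two small cleanups in part (i). You need not assume $r_{n} \to 0$. Also, your AL case split reuses the $-na^{2}$ you already spent on $g(a)$. Splitting it in halves fixes both: $\sup_{a \geq 0}(4a - \tfrac{n}{2}a^{2}) = 8/n$ and $\sup_{a \geq 0}(4\Vert\hat{\theta}\Vert_{2}\sqrt{a} - \tfrac{n}{2}a^{2}) = O(n^{-1/3})$, which covers the SA and AL models at once.

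Incidentally, the cross term in Lemma~\ref{lem:quad gsm 2} should carry $8/n$, not $4/n$. The appendix computes $\cov(\langle Y, \hat{\theta}\rangle, \cdot)$ instead of $\cov(2\langle Y, \hat{\theta}\rangle, \cdot)$; directly, $\var(\hat{Q}_{2,n}(k)) = \frac{4k}{n^{2}} + \frac{4}{n}\Vert\mathsf{\Pi}_{k}\theta\Vert_{2}^{2} + \frac{4}{n}\Vert\mathsf{\Pi}_{k}^{\perp}\hat{\theta}\Vert_{2}^{2}$. This only changes your Cauchy--Schwarz constant, and your sign choice in (ii) still makes that term nonpositive.

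The real gap is the hedge in part (ii), which comes from using the wrong threshold. The condition $r_{n} \geq C/n$ in the statement is the minimaxity condition inherited from Lemma~\ref{lem:quad gsm}. Part (ii) is imported from Theorem~\ref{thm:high level}, whose hypothesis is $\prod_{j} r_{n,j} \geq C/n$. Here $J = 2$ with both rates equal to $r_{n}$, so this reads $r_{n}^{2} \geq C/n$. That is what the paper's proof actually uses: a law with $|\bias(\hat{Q}_{1,n})| = r_{n} \geq \sqrt{C/n}$ and $\bias(\hat{Q}_{2,n}(k)) = 0$.

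Under that hypothesis your own display closes with nothing beyond $k = o(n)$. Indeed $nr_{n}^{2} \geq C$ and $nr_{n} \geq \sqrt{Cn} \to \infty$, so $4k/n + 4r_{n} - nr_{n}^{2} \leq 4k/n - C(1 - 4/\sqrt{Cn}) < 0$ for large $n$.

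Your proposed repair cannot work in the regime that worried you. If $r_{n} \asymp 1/n$, then $n^{2}r_{n}^{2} = O(1)$, so $k = o(n^{2}r_{n}^{2})$ is impossible for $k \geq 1$. In fact (ii) genuinely fails there when $k \gg \sqrt{n}$. Uniformly over the model, the bias gain $a^{2} + 2ab$ is at most $r_{n}^{2} = O(n^{-2})$ and the cross-term variance gain is $O(n^{-3/2})$, while the variance penalty is $4k/n^{2}$; hence $\inf \Delta_{n} > 0$. So drop the tuning of $k$ and prove (ii) under $r_{n}^{2} \geq C/n$, which also holds trivially in $\calP_{\AL}(\hat{\theta})$.
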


Echoing the comment right after Theorem~\ref{thm:high level}, for the minimaxity of $\hat{Q}_{1, n}$ or $\hat{Q}_{2, n} (k)$, we need $r_{n} \gtrsim n^{-1}$. When $r_{n} \ll n^{-1}$, the so-called plug-in estimator $\hat{Q}_{\mathrm{pi}} \coloneqq \Vert\hat{\theta} \Vert^{2}$ has zero variance and squared bias of order $r_{n}$, thus dominating both $\hat{Q}_{1, n}$ and $\hat{Q}_{2, n} (k)$ when $\Vert\hat{\theta} \Vert^{2}$ is of order 1. However, $r_{n} \ll n^{-1}$, or equivalently $\Vert \hat{\theta} - \theta \Vert\ll n^{-1 / 2}$, is generally difficult to hold if the sample used to compute $\hat{\theta}$ is of size similar to $n$, as such a condition says that we can estimate the possibly infinite-dimensional $\theta$ at a rate much faster than the parametric rate. A similar discussion also applies to the quadratic density integral functional to be discussed next.

\section{Quadratic Density Integral Functional}
\label{sec:quad}

The second example is about estimating the quadratic density integral functional of the probability density function $f$ of $X$ based on $n$ i.i.d. observations $\{X_{i} \in [0, 1]^{d}\}_{i = 1}^{n} \sim f$:
\begin{equation}
\label{quad}
\psi(f) \coloneqq \int f (x)^{2} \diff x.
\end{equation}

The SA model corresponding to $\psi(f)$ is defined by \citet{balakrishnan2026fundamental} as:
\begin{align}
\label{quad model}
\calP_{\SA} ((\hat{f}, \hat{f}), (r_{n}, r_{n})) \coloneqq \left\{ \bbP_{f}: \Vert \hat{f} - f \Vert_{2}^{2} \leq r_{n}, \int f (x) \diff x = 1, f \geq 0, \Vert \hat{f} \Vert_{\infty} \leq C, \Vert f \Vert_{\infty} \leq C \right\}  ,
\end{align}
where $\hat{f}$ is some initial estimator of $f$ computed from a separate independent sample treated as fixed. Similar to the case in Section~\ref{sec:gsm}, we take $J = 2$ and $\prod_{j = 1}^{J} r_{n, j} = r_{n}^{2}$, and write $\calP_{\SA} (\hat{f}, r_{n})$ instead.

\begin{remark}
\label{rem:cf1} 
As mentioned in footnote 1, we use the same estimator $\hat{f} \equiv \hat{f}_{1} \equiv \hat{f}_{2}$ to compute $\hat{\psi}_{1, n}$, which is the standard DML estimator commonly employed in the literature \citep{chernozhukov2018double} but excludes more refined estimators with $f$ estimated by separate samples studied in \citet{newey2018cross, mcgrath2026nuisance, mcclean2026double}.
\end{remark}

The following lemma, paraphrasing the results of \citet{balakrishnan2026fundamental}, summarizes the lower and upper bounds of the error rate of estimating $\psi (f)$ under $\calP_{\SA} (\hat{f}, r_{n})$.

\begin{lemma}
\label{lem:quad} 
The following hold. Suppose that there exists a universal constant $C > 0$ such that $r_{n} \geq C / n$,
\begin{align*}
\mathfrak{R}_{n} (\psi; \calP_{\SA} (\hat{f}, r_{n})) \coloneqq \inf_{\hat{\psi}} \sup_{\bbP_{f} \in \calP_{\SA} (\hat{f}, r_{n})} \E_{f} [(\hat{\psi} - \psi(f))^{2}] \geq C \Big\{ r_{n}^{2} + \frac{1}{n} \Big( \Vert \hat{f} \Vert_{3}^{3} - \Vert \hat{f} \Vert_{2}^{4} \Big) \Big\}.
\end{align*}
This lower bound is attained by the first-order estimator $\hat{\psi}_{1, n}$, defined as
\begin{align*}
\hat{\psi}_{1, n} \coloneqq \frac{2}{n} \sum_{i = 1}^{n} \hat{f} (X_{i}) - \int \hat{f} (x)^{2} \diff x.
\end{align*}
The bias, variance and MSE of $\hat{\psi}_{1, n}$ have the following forms:
\begin{align*}
\bias (\hat{\psi}_{1, n}) & = - \int (\hat{f} (x) - f (x))^{2} \diff x \equiv - \Vert \hat{f} - f \Vert_{2}^{2}, \\
\var (\hat{\psi}_{1, n}) & = \frac{4}{n} \var (\hat{f} (X)) \equiv \frac{4}{n} \Big\{ \int \hat{f} (x)^{2} f (x) \diff x - \Big( \int \hat{f} (x) f (x) \diff x \Big)^{2} \Big\}, \text{ and} \\
\mse (\hat{\psi}_{1, n}) & = \Vert \hat{f} - f \Vert_{2}^{4} + \frac{4}{n} \var (\hat{f} (X)).
\end{align*}
\end{lemma}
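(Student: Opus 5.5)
The lemma splits into two parts: the exact bias/variance/MSE identities for $\hat{\psi}_{1, n}$, which are direct calculations, and the minimax lower bound together with its attainment. I would do the calculations first and then treat the lower bound, citing Theorem~1 (Part~2) of \citet{balakrishnan2026fundamental} where they apply in this setting.

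\textbf{Bias, variance, MSE.} Since $\hat f$ is computed from an independent sample and treated as fixed, $\E_f[\hat f(X_i)]=\int \hat f f$. Hence
\begin{align*}
\E_f \hat{\psi}_{1,n} - \psi(f) = 2\int \hat f f - \int \hat f^2 - \int f^2 = -\int(\hat f - f)^2 .
\end{align*}
The term $\int \hat f^2$ is deterministic. The $X_i$ are i.i.d., so $\var(\hat\psi_{1,n}) = \frac{4}{n}\var_f(\hat f(X))$, which expands to the stated formula. The MSE is the squared bias plus the variance.

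\textbf{Upper bound.} On $\calP_{\SA}(\hat f,r_n)$ the squared bias is at most $r_n^2$. It remains to compare $\var_f(\hat f(X))$ with $\Vert\hat f\Vert_3^3-\Vert\hat f\Vert_2^4$, which is $\var_{\hat f}(\hat f(X))$. Write $f=\hat f+\delta$ with $\Vert\delta\Vert_2\le\sqrt{r_n}$. Then
\begin{align*}
\int\hat f^2 f-\int\hat f^3 &= \int \hat f^2\delta \le C^2 \Vert \delta\Vert_1 \le C^2\Vert\delta\Vert_2,\\
\Big(\int \hat f f\Big)^2 - \Vert \hat f\Vert_2^4 &= O(\Vert\delta\Vert_2),
\end{align*}
using the sup-norm bounds and the fact that $[0,1]^d$ has unit volume. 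So $\var_f(\hat f(X))\le \var_{\hat f}(\hat f(X)) + O(\sqrt{r_n})$, and the extra term contributes $O(\sqrt{r_n}/n)$.

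This extra term is where care is needed: it is not automatically dominated by $r_n^2 + \frac{1}{n}(\Vert\hat f\Vert_3^3-\Vert\hat f\Vert_2^4)$. I would split into two cases.
\begin{itemize}
\item If $r_n\ge n^{-2/3}$, then $\sqrt{r_n}/n\le r_n^2$.
\item If $r_n< n^{-2/3}$, then $\sqrt{r_n}/n\le n^{-4/3}=o(1/n)$. This is negligible provided $\var_{\hat f}(\hat f)$ is bounded below, and is otherwise absorbed into $r_n^2$ once $r_n\gtrsim n^{-1}$, since $n^{-4/3}\lesssim n^{-2}\cdot n^{2/3}$. This is the step I would check most carefully.
\end{itemize}
The fallback is to state attainment up to constants in the form used in \citet{balakrishnan2026fundamental}.

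\textbf{Lower bound (main obstacle).} I would take the maximum of two constructions.

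For the $r_n^2$ term I would use a fuzzy-hypotheses (Le Cam mixture) argument. Compare the null $f_0=\hat f$ with a mixture of perturbations $f_\lambda=\hat f+\sqrt{r_n/k}\sum_{j\le k}\lambda_j\phi_j$. Here $\lambda_j$ are Rademacher signs, and the $\phi_j$ are orthonormal bumps with disjoint supports and mean zero, so that $\int f_\lambda=1$. Each $f_\lambda$ satisfies $\Vert f_\lambda-\hat f\Vert_2^2=r_n$, and the separation is $\psi(f_\lambda)-\psi(\hat f)=r_n+2\langle\hat f,\delta_\lambda\rangle$. To make the cross term vanish, I would choose the $\phi_j$ orthogonal to $\hat f$ as well. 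The bound then follows once $\chi^2(\bar P^{\otimes n}_\lambda,P_0^{\otimes n})=O(1)$. Second-moment calculations give $\chi^2$ of order $\exp(n^2 r_n^2/k)-1$, so I would choose $k\gtrsim n^2r_n^2$. Positivity and the sup-norm bounds require $\sqrt{r_n/k}\,\Vert\phi_j\Vert_\infty\lesssim 1$, where $\Vert\phi_j\Vert_\infty\sim\sqrt{k}$ for bumps of volume $1/k$. This forces $r_n\lesssim 1$, which holds.

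For the $\frac1n\var_{\hat f}(\hat f)$ term I would use a two-point argument along the direction $\hat f-\int\hat f^2$. Perturb $f_t=\hat f(1+t(\hat f-\Vert\hat f\Vert_2^2))$ with $t\asymp n^{-1/2}$. This stays in the model because $t^2\var_{\hat f}(\hat f)\lesssim 1/n\le r_n/C$, which is where $r_n\ge C/n$ is used. The perturbation changes $\psi$ by about $2t\,\var_{\hat f}(\hat f)$, while the two hypotheses remain at bounded KL divergence.

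Combining the two constructions via $\max\ge$ half the sum gives the lower bound.
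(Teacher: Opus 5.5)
\textbf{How your route relates to the paper's.} Your bias, variance and MSE computations are the ones the paper intends; it states them without proof. For the minimax part, the paper gives no argument of its own. It quotes both the lower bound and the attainment claim from Theorem~1 (Part~2) and Theorem~2 (Part~2) of \citet{balakrishnan2026fundamental}.

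Your explicit constructions are a self-contained alternative, and they work after two repairs.
\begin{enumerate}
\item In the fuzzy-hypotheses step, place the bumps inside $\{\hat f \geq 1/2\}$. This set has measure at least $1/(2C)$ because $\int \hat f = 1$ and $\hat f \leq C$. Otherwise you have neither positivity of $f_\lambda$ nor a bound on $\int \phi_j^2/\hat f$ in the $\chi^2$ computation.
\item In the two-point step, $t \asymp n^{-1/2}$ only gives $\{\var_{\hat f}(\hat f(X))\}^2/n$. You need $t \asymp \{n\var_{\hat f}(\hat f(X))\}^{-1/2}$. This choice is admissible when $n\var_{\hat f}(\hat f(X)) \gtrsim 1$; in the complementary case the term is dominated by $r_n^2$ anyway.
\end{enumerate}

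\textbf{The genuine gap: attainment.} Your claim that for $r_n < n^{-2/3}$ the extra $\sqrt{r_n}/n$ is ``absorbed into $r_n^2$ once $r_n \gtrsim n^{-1}$'' is false. The relation $n^{-2}\cdot n^{2/3} = n^{-4/3}$ is only an identity. For $r_n \asymp 1/n$ you have $r_n^2 \asymp n^{-2} \ll n^{-3/2} \asymp \sqrt{r_n}/n$.

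No sharper argument can fix this under the model as written, which imposes only upper sup-norm bounds. Take $d=1$ and $\hat f = 2\cdot 1_{[0,1/2]}$, so that $\Vert \hat f\Vert_3^3 - \Vert \hat f\Vert_2^4 = 4-4 = 0$. Let $f = (2-b)1_{[0,1/2]} + b\,1_{(1/2,1]}$ with $b = \sqrt{r_n}$. Then $\Vert f - \hat f\Vert_2^2 = r_n$, but $\var_f(\hat f(X)) = 4(1-b/2)(b/2) \asymp \sqrt{r_n}$. Hence $\sup \mse(\hat\psi_{1,n}) \gtrsim \sqrt{r_n}/n$, while the stated bound is $C r_n^2$, which is of smaller order whenever $r_n \ll n^{-2/3}$. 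A two-point comparison of $f_b$ and $f_{b+\sqrt{b/n}}$ shows the minimax risk here is itself of order $\sqrt{r_n}/n$. So it is the stated rate that is loose, not the estimator.

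Your ``fallback'' is therefore necessary: attainment needs an extra hypothesis. A natural one is $\hat f \geq c > 0$. Under it, put $m = \Vert\hat f\Vert_2^2$ and $V = \var_{\hat f}(\hat f(X))$, and use
\[
\var_f(\hat f(X)) \leq \E_f(\hat f(X) - m)^2 = V + \int (\hat f - m)^2 (f - \hat f).
\]
Since $(\hat f - m)^4 \leq C^2(\hat f - m)^2 \leq (C^2/c)(\hat f - m)^2\hat f$, Cauchy--Schwarz bounds the last term by $C'\sqrt{V r_n} \leq C'(V + r_n)/2$. Finally, $r_n/n \lesssim r_n^2$ precisely because $r_n \geq C/n$.

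You bounded $\int \hat f^2(f-\hat f)$ and the squared-mean term separately. That discards this centering and loses the factor $\sqrt{V}$, which is what makes the crude $O(\sqrt{r_n}/n)$ term appear.
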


The lower and upper bounds can be found in Theorem~1, Part~2 and Theorem~2, Part~2 of \citet{balakrishnan2026fundamental}, respectively. These results, taken together, prove the optimality of $\hat{\psi}_{1, n}$ in the minimax sense.

To show the asymptotic inadmissibility of the minimax estimator $\hat{\psi}_{1, n}$, when $r_{n}^{2} \gtrsim n^{-1}$ or equivalently $r_{n}^{1 / 2} \gtrsim n^{-1 / 4}$, we exhibit a different estimator that improves on $\hat{\psi}_{1, n}$. To this end, let $\bar{\phi}_{k} \coloneqq (\phi_{1}, \cdots, \phi_{k})^{\top}$ be a $k$-dimensional orthonormal basis with respect to the Lebesgue measure \citep{chen2007large}. We then construct the following second-order $U$-statistic estimator:
\begin{align*}
\hat{\psi}_{2, n} (\bar{\phi}_{k}) & := \hat{\psi}_{1, n} + \bbU_{n, 2} \left[ \left( \bar{\phi}_{k} (X_{1}) - \int \bar{\phi}_{k} (x) \hat{f} (x) \diff x \right)^{\top} \left( \bar{\phi}_{k} (X_{2}) - \int \bar{\phi}_{k} (x) \hat{f} (x) \diff x \right) \right] \\
& \equiv \bbU_{n, 1} [2 (\hat{f} (X) - \mathsf{\Pi} [\hat{f} \mid \bar{\phi}_{k}] (X))] + \bbU_{n, 2} [\bar{\phi}_{k} (X_{1})^{\top} \bar{\phi}_{k} (X_{2})] - \int \left( \hat{f} (x)^{2} - \mathsf{\Pi}[\hat{f} \mid \bar{\phi}_{k}] (x)^{2} \right) \diff x.
\end{align*}

\begin{remark}
\label{rem:quad2} 
Expert readers shall realize that $\hat{\psi}_{2, n} (\bar{\phi}_{k})$ debiases $\hat{\psi}_{1, n}$ by subtracting an unbiased estimator of a part of its bias, based on HOIFs. The part of the bias of $\hat{\psi}_{1, n}$ to be estimated is determined by the choice of $\bar{\phi}_{k}$. We mention in passing that the falsification test of \citet{liu2020nearly} mentioned earlier is essentially based on the statistic $\hat{\psi}_{2, n} (\bar{\phi}_{k}) - \hat{\psi}_{1, n}$. Similar tests or estimators have also been considered in instrumental variable or proximal causal inference settings \citep{breunig2024adaptive, liu2024assumption}.
\end{remark}

Let $\eta := \int \bar{\phi}_{k} (x) f (x) \diff x$ and $\hat{\eta} := \int \bar{\phi}_{k} (x) \hat{f} (x) \diff x$. We also make the following assumption on $\Sigma$.

\begin{assumption}
\label{as:Sigma quad} 
$\Sigma$ is assumed to have bounded spectra.
\end{assumption}

We now state the following lemma. The proof can be found in Appendix~\ref{app:lem:quad2}.

\begin{lemma}
\label{lem:quad2} 
The bias and variance of $\hat{\psi}_{2, n} (\bar{\phi}_{k})$ under $\bbP_{f} \in \calP_{\SA} (\hat{f}, r_{n})$ are characterized as follows:
\begin{align*}
\bias (\hat{\psi}_{2, n} (\bar{\phi}_{k})) = & - \int (\hat{f} (x) - f (x))^{2} \diff x + \int \mathsf{\Pi}[\hat{f} - f \mid \bar{\phi}_{k}] (x)^{2} \diff x \equiv - \Vert \hat{f} - f \Vert_{2}^{2} + \Vert \mathsf{\Pi} [\hat{f} - f \mid \bar{\phi}_{k}] \Vert_{2}^{2} \\
\equiv & - \Vert\hat{f} - f \Vert_{2}^{2} + \Vert \hat{\eta} - \eta \Vert_{2}^{2} \equiv - \Vert \mathsf{\Pi}^{\perp} [\hat{f} - f \mid \bar{\phi}_{k}] \Vert_{2}^{2} \lesssim r_{n},\\
\var (\hat{\psi}_{2, n} (\bar{\phi}_{k})) = & \ \frac{4}{n} \var [\hat{f} (X)] + \frac{8}{n} \left( \int\bar{\phi}_{k} (x) f (x) \hat{f} (x) \diff x - \hat{\eta} \right)^{\top} (\hat{\eta} - \eta) \\
& + \frac{2}{n (n - 1)} \left\{ \begin{array}[c]{c} 
\Tr (\Sigma^{2}) - 4 \hat{\eta}^{\top} \Sigma \eta + 2 \hat{\eta}^{\top} \Sigma \hat{\eta} + 2 \hat{\eta}^{\top} \hat{\eta} \cdot \eta^{\top} \eta \\
+ \, 2 (\hat{\eta}^{\top} \eta)^{2} - 4 \hat{\eta}^{\top} \hat{\eta} \cdot \hat{\eta}^{\top} \eta+ (\hat{\eta}^{\top} \hat{\eta})^{2}
\end{array} \right\} \\
\leq & \ \frac{4}{n} \var [\hat{f} (X)] + \frac{C}{n} \Vert \mathsf{\Pi} [\hat{f} - f \mid \bar{\phi}_{k}] \Vert_{2} + \frac{C k}{n^{2}} \lesssim \frac{1}{n},
\end{align*}
where the inequalities hold for $\bbP_{f} \in \calP_{\SA} (\hat{f}, r_{n})$ under Assumption~\ref{as:Sigma quad}.
\end{lemma}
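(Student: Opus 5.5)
Write the correction term as $D \coloneqq \bbU_{n,2}[h(X_1)^{\top} h(X_2)]$ with $h(x) \coloneqq \bar{\phi}_{k}(x) - \hat{\eta}$. Then $\hat{\psi}_{2,n}(\bar{\phi}_{k}) = \hat{\psi}_{1,n} + D$, and both the bias and the variance come from moments of a first-order linear term and a degenerate-plus-linear second-order $U$-statistic. The bias is handled first. Since $X_1 \perp X_2$, we have $\E[D] = (\E h(X))^{\top}(\E h(X)) = (\eta - \hat{\eta})^{\top}(\eta - \hat{\eta}) = \Vert \hat{\eta} - \eta\Vert_2^2$. Because $\bar{\phi}_{k}$ is orthonormal in Lebesgue measure, $\hat{\eta} - \eta$ is exactly the coefficient vector of $\mathsf{\Pi}[\hat{f} - f \mid \bar{\phi}_{k}]$, so $\Vert\hat{\eta} - \eta\Vert_2^2 = \Vert \mathsf{\Pi}[\hat{f} - f\mid\bar{\phi}_{k}]\Vert_2^2$. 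Adding the bias of $\hat{\psi}_{1,n}$ from Lemma~\ref{lem:quad} and using Pythagoras gives $-\Vert\mathsf{\Pi}^{\perp}[\hat{f}-f\mid\bar{\phi}_{k}]\Vert_2^2$, which is bounded by $\Vert \hat{f}-f\Vert_2^2 \le r_n$.

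For the variance, I would take the Hoeffding decomposition of $D$. Set $\mu \coloneqq \eta - \hat{\eta}$ and $g(x) \coloneqq h(x) - \mu = \bar{\phi}_{k}(x) - \eta$, which is centered under $f$. Then
\begin{align*}
h(X_1)^{\top} h(X_2) - \Vert\mu\Vert_2^2 = \mu^{\top}(g(X_1) + g(X_2)) + g(X_1)^{\top} g(X_2),
\end{align*}
so $D - \E D = \frac{2}{n}\sum_i \mu^{\top} g(X_i) + \bbU_{n,2}[g(X_1)^{\top} g(X_2)]$, and the two pieces are uncorrelated. The total linear part of $\hat{\psi}_{2,n}$ is $\frac{2}{n}\sum_i\{\hat{f}(X_i) + \mu^{\top} g(X_i)\}$. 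Its variance is
\begin{align*}
\frac{4}{n}\big\{\var \hat{f}(X) + 2\cov(\hat{f}(X), \mu^{\top}\bar{\phi}_{k}(X)) + \mu^{\top}\cov(\bar{\phi}_{k}(X))\mu\big\}.
\end{align*}
The cross term equals $\frac{8}{n}\big(\int \bar{\phi}_{k} f\hat{f} - \eta\int\hat{f}f\big)^{\top}\mu$. I would then rewrite it together with the quadratic term into the displayed form $\frac{8}{n}\big(\int\bar{\phi}_{k}f\hat{f} - \hat{\eta}\big)^{\top}(\hat{\eta}-\eta)$ plus $O(1/n^2)$-type pieces that get absorbed into the braces. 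The degenerate part has variance $\frac{2}{n(n-1)}\E[(g(X_1)^{\top}g(X_2))^2] = \frac{2}{n(n-1)}\Tr(\mathrm{Cov}(\bar{\phi}_{k})^2)$. Expanding $\mathrm{Cov} = \Sigma - \eta\eta^{\top}$, and also re-expressing whatever remains of the $\mu$-quadratic terms in terms of $\hat{\eta}$ and $\eta$, yields the brace expression. I would match the algebra term by term, treating the exact constant bookkeeping as routine but tedious. This identity matching is the step I expect to be the main obstacle: getting the displayed exact formula requires care about which $\mu$-dependent pieces are grouped into the $1/n$ term and which into the $1/n^2$ term.

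Finally, the inequality. For the $1/n$ cross term, note that $\int \bar{\phi}_{k} f\hat{f}$ is the coefficient vector of $\mathsf{\Pi}[f\hat{f}\mid\bar{\phi}_{k}]$, so its $\ell_2$ norm is at most $\Vert f\hat{f}\Vert_2 \le C^2$. Similarly $\Vert\hat{\eta}\Vert_2 \le \Vert\hat{f}\Vert_2 \le C$ and $\Vert\eta\Vert_2 \le C$. Cauchy–Schwarz then bounds the term by $\frac{C}{n}\Vert\hat{\eta}-\eta\Vert_2 = \frac{C}{n}\Vert\mathsf{\Pi}[\hat{f}-f\mid\bar{\phi}_{k}]\Vert_2$. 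For the brace, Assumption~\ref{as:Sigma quad} gives $\Tr(\Sigma^2) \le k\Vert\Sigma\Vert_{\op}^2 \lesssim k$, and every other term is a bounded polynomial in $\Vert\eta\Vert_2$, $\Vert\hat{\eta}\Vert_2$ and $\Vert\Sigma\Vert_{\op}$, hence $O(1) \lesssim k$. So the brace contributes at most $Ck/n^2$. With $k = o(n)$, the boundedness of $\hat{f}$ and $f$, and $\Vert\hat{f} - f\Vert_2 \le 2C$, all terms are $\lesssim 1/n$.
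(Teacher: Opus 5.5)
Your bias computation and your Hoeffding decomposition are correct. The gap is the step you flag as the main obstacle: turning the linear-part variance into the displayed cross term ``plus $O(1/n^{2})$-type pieces absorbed into the braces.'' No bookkeeping can do this, because the displayed equality is false. With $\mu = \eta - \hat{\eta}$, your own decomposition gives exactly
\[
\var (\hat{\psi}_{2, n} (\bar{\phi}_{k})) = \frac{4}{n} \var \big[ \hat{f} (X) + \mu^{\top} \bar{\phi}_{k} (X) \big] + \frac{2}{n (n - 1)} \Tr \big\{ (\Sigma - \eta \eta^{\top})^{2} \big\}.
\]
Its cross term is $\frac{8}{n} (\int \bar{\phi}_{k} f \hat{f} - \eta \int \hat{f} f)^{\top} (\eta - \hat{\eta})$, and it also contains $\frac{4}{n} \mu^{\top} (\Sigma - \eta \eta^{\top}) \mu$. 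These differ from the display by terms of order $\Vert \hat{\eta} - \eta \Vert_{2} / n$ and $\Vert \hat{\eta} - \eta \Vert_{2}^{2} / n$, not $O(1/n^{2})$. Moreover, the brace is a fixed, $n$-free number, namely $\E[(h (X_{1})^{\top} h (X_{2}))^{2}] = \Tr (M^{2})$ with $M = \Sigma - \eta \eta^{\top} + \mu \mu^{\top}$, so nothing can be absorbed into it.

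A concrete check: take $d = k = 1$, $\phi_{1} (x) = \sqrt{3} (2 x - 1)$, $\hat{f} \equiv 1$ and $f (x) = 1 + c (2 x - 1)$ with $0 < |c| < 1$; this lies in $\calP_{\SA} (\hat{f}, r_{n})$ whenever $r_{n} \geq c^{2} / 3$. Then $\hat{\eta} = 0$, $\eta = c / \sqrt{3}$, $\Sigma = 1$ and $\hat{\psi}_{1, n} \equiv 1$, so the covariance must vanish. Yet the displayed cross term is $- \frac{8 c^{2}}{3 n}$, and the displayed right-hand side $- \frac{8 c^{2}}{3 n} + \frac{2}{n (n - 1)}$ is negative for large $n$.

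The paper's appendix does not reach the display either. Its variance of the $U$-statistic keeps a $\frac{4 n - 8}{n (n - 1)} (\hat{\eta} - \eta)^{\top} (\Sigma - \hat{\eta} \eta^{\top} - \eta \hat{\eta}^{\top} + \hat{\eta} \hat{\eta}^{\top}) (\hat{\eta} - \eta)$ term that the lemma drops. It uses uncentered moments without the $- \frac{2 (2 n - 3)}{n (n - 1)} \Vert \hat{\eta} - \eta \Vert_{2}^{4}$ correction, whose analogue does appear in Lemma~\ref{lem:ecc2}. Its covariance writes $\hat{\eta}$ where $\eta \int \hat{f} f$ belongs, with the sign of $\hat{\eta} - \eta$ flipped relative to the lemma.

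The fix is to state the identity above as the exact variance and derive the inequality from it. Your Bessel/Cauchy--Schwarz argument handles the cross term, now also using $\Vert \eta \Vert_{2} \int \hat{f} f \leq C^{2}$. Your inequality paragraph never bounds the quadratic term, because you assumed it had been absorbed. It satisfies $\frac{4}{n} \mu^{\top} (\Sigma - \eta \eta^{\top}) \mu \leq \frac{4}{n} \Vert \Sigma \Vert_{\op} \Vert \mu \Vert_{2}^{2} \leq \frac{C}{n} \Vert \mu \Vert_{2}$, since $\Vert \mu \Vert_{2} \leq \Vert \hat{f} - f \Vert_{2} \leq 2 C$. The degenerate part satisfies $\Tr \{ (\Sigma - \eta \eta^{\top})^{2} \} \leq k \Vert \Sigma \Vert_{\op}^{2}$, because $0 \preceq \Sigma - \eta \eta^{\top} \preceq \Sigma$. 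Together these give exactly the lemma's upper bound $\frac{4}{n} \var [\hat{f} (X)] + \frac{C}{n} \Vert \mathsf{\Pi} [\hat{f} - f \mid \bar{\phi}_{k}] \Vert_{2} + \frac{C k}{n^{2}}$. That bound is what feeds Definition~\ref{def:mbc}(2), so the part of the lemma used downstream survives even though the displayed equality does not. Your Hoeffding organization is also cleaner than the paper's enumeration of uncentered moments, because it keeps the centering explicit.
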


We note that $\Vert\mathsf{\Pi} [\hat{f} - f \mid \bar{\phi}_{k}] \Vert_{2}^{2} \equiv \bias (\hat{\psi}_{1, n}) - \bias (\hat{\psi}_{2, n} (\bar{\phi}_{k}))$ so $\hat{\psi}_{2, n} (\bar{\phi}_{k})$ corrects the bias of $\hat{\psi}_{1, n}$ by estimating a lower bound of $\bias (\hat{\psi}_{1, n}) \lesssim r_{n}$. By Lemma~\ref{lem:quad gsm}, $\psi (f)$ belongs to the \emph{monotone bias class}. The following theorem therefore instantiates Theorem~\ref{thm:high level} for the quadratic density integral functional $\psi (f)$. There always exists a distribution in $\calP_{\SA} (\hat{f}, r_{n})$ such that Definition~\ref{def:mbc}(1) holds. To see this, consider the case where $f = \hat{f} + r_{n}^{1 / 2} \beta^{\top} \bar{\phi}_{k}$ with $\Vert \beta \Vert_{2} = 1$, for which $\bias (\hat{\psi}_{2, n} (\bar{\phi}_{k})) = 0$. The rest of the proof can be found in Appendix~\ref{app:quad}.

\begin{theorem}
\label{thm:quad} 
Choosing $k = o (n)$. Under model $\calP_{\SA} (\hat{f}, r_{n})$ and Assumption~\ref{as:Sigma quad}, $\hat{\psi}_{2, n} (\bar{\phi}_{k})$ is \emph{asymptotically minimax}. The same conclusions as in Theorem~\ref{thm:high level} hold by replacing $\psi (\theta)$ as $\psi (f)$, $\hat{\psi}_{1, n}$ as $\hat{\psi}_{1, n}$, and $\hat{\psi}_{2, n}$ as $\hat{\psi}_{2, n} (\bar{\phi}_{k})$. The same conclusions also hold when we replace the SA model $\calP_{\SA} (\hat{f}, r_{n})$ with the assumption-lean model $\calP_{\AL} (\hat{f})$ and drop the assumptions on $r_{n}$.
\end{theorem}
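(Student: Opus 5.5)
The plan is to verify the two conditions of Definition~\ref{def:mbc} for $\psi(f)$, with $\hat{\psi}_{2,n}(\bar{\phi}_{k})$ as the competitor, and then invoke Theorem~\ref{thm:high level}. Asymptotic minimaxity will be handled separately, by comparing the worst-case MSE of $\hat{\psi}_{2,n}(\bar{\phi}_{k})$ with the lower bound in Lemma~\ref{lem:quad}.

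\emph{Bias monotonicity.} By Lemma~\ref{lem:quad2}, $|\bias(\hat{\psi}_{2,n}(\bar{\phi}_{k}))| = \Vert \mathsf{\Pi}^{\perp}[\hat{f}-f\mid\bar{\phi}_{k}]\Vert_{2}^{2}$. By Pythagoras this is at most $\Vert\hat{f}-f\Vert_{2}^{2} = |\bias(\hat{\psi}_{1,n})| \leq r_{n}$. So Definition~\ref{def:mbc}(1) holds with $\prod_{j} r_{n,j}$ read as $r_{n}$ (the bias bound $C r_n$).

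For strictness, take the law $f = \hat{f} + r_{n}^{1/2}\beta^{\top}\bar{\phi}_{k}$ with $\Vert\beta\Vert_{2}=1$. Its bias for $\hat{\psi}_{2,n}$ is zero, while $|\bias(\hat{\psi}_{1,n})| = r_{n}$. I need this $f$ to lie in $\calP_{\SA}(\hat{f},r_{n})$: it must integrate to one, be nonnegative, and be bounded. I would enforce this by choosing $\beta$ with $\beta^{\top}\int\bar{\phi}_{k} = 0$. This is possible for $k\geq 2$, or when $\phi_{1}\equiv 1$ by taking $\beta_{1}=0$. I would also take $\hat{f}$ bounded below (or pick a direction supported where $\hat{f}$ is bounded away from zero) and use $\Vert\bar{\phi}_{k}\Vert_{\infty}\lesssim\sqrt{k}$. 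Positivity then holds if $r_{n}^{1/2}\sqrt{k}$ is small, or, in the assumption-lean case, by shrinking $\beta$ to a fixed small radius. This admissibility check is the most delicate point, and I expect it to be the main obstacle if $\hat{f}$ can vanish.

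\emph{Variance condition.} Lemma~\ref{lem:quad2} gives
\[
\var(\hat{\psi}_{2,n}) - \var(\hat{\psi}_{1,n}) \leq \frac{C}{n}\Vert \mathsf{\Pi}[\hat{f}-f\mid\bar{\phi}_{k}]\Vert_{2} + \frac{Ck}{n^{2}}.
\]
The projection norm is at most $\Vert\hat{f}-f\Vert_{2} = |\bias(\hat{\psi}_{1,n})|^{1/2}$, which is exactly \eqref{var bound} with $k/n\to 0$, since $k=o(n)$.

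Applying Theorem~\ref{thm:high level} also needs $\var(\hat{\psi}_{1,n}) = v/n$ with $v = 4\var(\hat{f}(X))$ bounded away from zero uniformly over the model. I would add this as a mild nondegeneracy requirement, or, to avoid relying on it, bound the denominator of $\Delta_{n}$ below by $n^{-1}v + \bias^{2}$ directly. Then $\sqrt{|\bias|}/(n\bias^{2}+v)\to$ small fails only if $v$ degenerates. Given these, part (i) of Theorem~\ref{thm:high level} follows for every $r_{n}$. Part (ii) follows under $r_{n}^{2}\geq C/n$ at the law constructed above, and the assumption-lean version is the case $r_{n}=R^{\ast}$.

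\emph{Minimaxity.} Since $\mse(\hat{\psi}_{2,n})\leq r_{n}^{2} + 4\var(\hat{f}(X))/n + C r_{n}^{1/2}/n + Ck/n^{2}$, the last two terms are $o(n^{-1})$, using $r_n\lesssim 1$ and $k=o(n)$. Comparing with the lower bound $C\{r_{n}^{2}+n^{-1}(\Vert\hat{f}\Vert_{3}^{3}-\Vert\hat{f}\Vert_{2}^{4})\}$ of Lemma~\ref{lem:quad} then gives rate-minimaxity. This uses the fact that $\var_{f}(\hat{f}(X))$ and $\Vert\hat{f}\Vert_{3}^{3}-\Vert\hat{f}\Vert_{2}^{4}$ differ by $O(r_{n}^{1/2})$ via $\Vert f-\hat{f}\Vert_{2}\leq r_{n}^{1/2}$ and boundedness. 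It also uses that $\hat{\psi}_{2,n}$'s MSE is asymptotically no larger than that of the minimax $\hat{\psi}_{1,n}$, which is part (i).
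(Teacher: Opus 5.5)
Your proposal is correct and follows the paper's route. You verify Definition~\ref{def:mbc} from Lemma~\ref{lem:quad2}: Pythagoras handles the bias, and $\Vert \mathsf{\Pi}[\hat f - f \mid \bar\phi_k]\Vert_2 \le \Vert \hat f - f\Vert_2$ handles the variance condition. You then use the same perturbation $f = \hat f + r_n^{1/2}\beta^\top\bar\phi_k$ for strictness, invoke Theorem~\ref{thm:high level}, and read minimaxity off the bias and variance orders, exactly as the paper does.

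Your extra checks address points the paper leaves implicit; they are refinements, not a different route. One is that the perturbed $f$ is a bounded nonnegative density in the model; choosing a sparse $\beta$ is cleaner there than relying on $r_n^{1/2}\sqrt{k}$ being small. The other is that $v = 4\var_f(\hat f(X))$ stays bounded away from zero, which genuinely fails, e.g., for $\hat f \equiv 1$.
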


\section{Expected Conditional Covariance}
\label{sec:ecc}

All the functionals that we have analyzed so far fall within the \emph{monotone bias class}. In this section, we turn to the Expected Conditional Covariance (ECC) functional, defined as
\begin{align*}
\psi (a, b) \coloneqq \E [\cov (A, Y \mid X)] \equiv \E [(A - a (X)) (Y - b (X))],
\end{align*}
where $X \in [0, 1]^{d}$ denotes the baseline covariates, $A, Y \in \bbR$ are two types of responses, $a (\cdot) \coloneqq \E (A \mid X = \cdot)$ and $b (\cdot) \coloneqq \E (Y \mid X = \cdot)$. The ECC functional, as extensively discussed in \citet{liu2020nearly}, is not in the \emph{monotone bias class}. Therefore, not surprisingly, we can no longer conclude the asymptotic inadmissibility of the first-order DML estimator $\hat{\psi}_{1, n}$ for $\psi (a, b)$. Specifically, based on $n$ i.i.d. observations $\{X_{i}, A_{i}, Y_{i}\}_{i = 1}^{n}$, $\hat{\psi}_{1, n}$ reads as:
\begin{align*}
\hat{\psi}_{1, n} \coloneqq \frac{1}{n} \sum_{i = 1}^{n} (A_{i} - \hat{a} (X_{i})) (Y_{i} - \hat{b} (X_{i})).
\end{align*}

As usual, before presenting our new results, we first summarize the statistical properties and minimaxity of $\hat{\psi}_{1, n}$ obtained in \citet{balakrishnan2026fundamental} under the SA model defined by \citet{balakrishnan2026fundamental} for $\psi(a, b)$:
\begin{equation}
\label{ecc}
\calP_{\SA} ((\hat{a}, \hat{b}), (r_{n}, s_{n})) \coloneqq \left\{ \bbP_{a, b}: \Vert a - \hat{a} \Vert_{2, \bbP}^{2} \lesssim r_{n}, \Vert b - \hat{b} \Vert_{2, \bbP}^{2} \lesssim s_{n} \right\}.
\end{equation}
We let $p$ denote the marginal density of $X$, which, for simplicity, is assumed to be $\mathrm{Unif} ([0, 1]^{d})$.

\begin{lemma}
\label{lem:ecc} 
The following hold. There exists a constant $C > 0$ such that
\begin{align*}
\mathfrak{R}_{n} (\psi; \calP_{\SA} ((\hat{a}, \hat{b}), (r_{n}, s_{n}))) \coloneqq \inf_{\hat{\psi}} \sup_{\bbP_{a, b} \in \calP_{\SA} ((\hat{a}, \hat{b}), (r_{n}, s_{n}))} \E_{a, b} [(\hat{\psi} - \psi(a, b))^{2}] \geq C \Big( r_{n} \cdot s_{n} + \frac{1}{n} \Big).
\end{align*}
This lower bound is attained by the first-order estimator $\hat{\psi}_{1, n}$. The bias, variance, and MSE of $\hat{\psi}_{1, n}$ have the following forms:
\begin{align*}
\bias (\hat{\psi}_{1, n}) & = - \langle a - \hat{a}, b - \hat{b} \rangle_{\bbP}, \\
\var (\hat{\psi}_{1, n}) & = \frac{1}{n} \left\{  \E [(A - \hat{a} (X))^{2} (Y - \hat{b} (X))^{2}] - \E^{2} [(A - \hat{a} (X)) (Y - \hat{b} (X))] \right\}, \text{ and} \\
\mse (\hat{\psi}_{1, n}) & = \langle a - \hat{a}, b - \hat{b} \rangle_{\bbP}^{2} + \frac{1}{n} \left\{ \E [(A - \hat{a} (X))^{2} (Y - \hat{b} (X))^{2}] - \E^{2} [(A - \hat{a} (X)) (Y - \hat{b} (X))] \right\}.
\end{align*}
\end{lemma}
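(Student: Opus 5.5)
The upper bound, i.e.\ the bias, variance and MSE formulas for $\hat{\psi}_{1, n}$, will be derived by direct calculation. The lower bound will be established by a two-point (or fuzzy-hypotheses) argument, in the spirit of \citet{balakrishnan2026fundamental}.

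\emph{Bias.} Since $\hat{a}, \hat{b}$ are fixed, $\E[\hat{\psi}_{1, n}] = \E[(A - \hat{a}(X))(Y - \hat{b}(X))]$. I will write $A - \hat{a} = (A - a) + (a - \hat{a})$ and $Y - \hat{b} = (Y - b) + (b - \hat{b})$, expand the product, and condition on $X$. The cross terms vanish because $\E[A - a(X) \mid X] = \E[Y - b(X) \mid X] = 0$. This leaves $\psi(a, b) + \langle a - \hat{a}, b - \hat{b} \rangle_{\bbP}$. The bias is therefore the mixed product $\langle a - \hat{a}, b - \hat{b}\rangle_{\bbP}$; the sign convention should be matched to the statement. \emph{Variance.} $\hat{\psi}_{1, n}$ is an i.i.d.\ average, so its variance is $n^{-1}\var[(A - \hat{a}(X))(Y - \hat{b}(X))]$, which is the displayed formula. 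The MSE is then bias$^2$ plus variance. By Cauchy--Schwarz the bias satisfies $|\bias| \le \sqrt{r_n s_n}$, so its square is at most $r_n s_n$. Under bounded-moment conditions implicit in the model, the variance is $O(1/n)$. Hence $\mse(\hat{\psi}_{1, n}) \lesssim r_n s_n + 1/n$, which matches the lower bound.

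\emph{Lower bound.} The $1/n$ term follows from a standard two-point (Le Cam) argument. I will perturb the joint law within the SA model, e.g.\ perturb the conditional covariance with $a, b$ fixed at $\hat{a}, \hat{b}$, so that $\psi$ changes by $\asymp n^{-1/2}$ while the KL divergence of the $n$-fold product stays $O(1)$.

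The $r_n s_n$ term is the main obstacle. A naive two-point construction fails: perturbing $a$ and $b$ changes $\psi$ only through the covariance structure, and a single alternative with $a - \hat{a}$ and $b - \hat{b}$ aligned is detectable from the data at rate $\sqrt{n}$. I would follow the mixture-versus-point construction of \citet{balakrishnan2026fundamental}. Partition $[0, 1]^d$ into $m$ bins and place random signs $\lambda_j \in \{\pm 1\}$ on bump functions, with $a = \hat{a} + \sqrt{r_n}\sum_j \lambda_j h_j$ and $b = \hat{b} + \sqrt{s_n}\sum_j \lambda_j h_j$. The same signs are used for both perturbations, so that $\langle a - \hat{a}, b - \hat{b}\rangle_{\bbP} \asymp \sqrt{r_n s_n}$ regardless of the signs, while the first moments of the mixture match those of the null $(\hat{a}, \hat{b})$. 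Under the mixture, the conditional covariance of $(A, Y)$ is adjusted so that $\psi$ differs between null and mixture by $\asymp \sqrt{r_n s_n}$. I would then bound the $\chi^2$ (or Hellinger) distance between the null product and the mixture of products, using the second-moment ($\chi^2$ of mixtures) bound for sign mixtures. Choosing $m \gg n^2$, i.e.\ letting the bins be fine enough, makes this distance vanish, because the structure-agnostic model imposes no smoothness. Verifying that the conditional variances remain valid, e.g.\ with binary or Gaussian $A, Y$, and that the $\chi^2$ computation closes, is the delicate step. For this I would rely on the corresponding theorem in \citet{balakrishnan2026fundamental}.
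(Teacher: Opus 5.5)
Your proposal is correct and follows the route the paper relies on: the bias, variance and MSE formulas are direct i.i.d.\ calculations, and the paper imports the lower bound from \citet{balakrishnan2026fundamental}, whose argument is the point-versus-sign-mixture construction you sketch, on $m \gg n^{2}$ bins with common signs on $a - \hat{a}$ and $b - \hat{b}$. The step you defer is in fact easy: take the null to be the exact $\lambda$-average of the alternatives, keeping the within-bin covariance free of $\lambda$. That null has conditional means $\hat{a}, \hat{b}$, and by the law of total covariance its value of $\psi$ is larger by $\sqrt{r_{n} s_{n}} \sum_{j} \Vert h_{j} \Vert_{2}^{2}$. On the event that no two $X_{i}$ share a bin, the two $n$-sample laws coincide, so their total variation distance is at most $\binom{n}{2} / m$. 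One correction: your own computation gives $\bias (\hat{\psi}_{1, n}) = + \langle a - \hat{a}, b - \hat{b} \rangle_{\bbP}$, which is right. The minus sign in the statement is a typo, not a convention, because the same convention $\E \hat{\psi} - \psi$ is what produces $- \Vert \hat{\theta} - \theta \Vert_{2}^{2}$ in Lemma~\ref{lem:quad gsm}. You should flag the discrepancy rather than match it; it does not affect the MSE or minimax claims.
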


To construct the second-order estimator, we similarly find a $k$-dimensional dictionary $\bar{\phi}_{k}$ and denote $\Sigma \coloneqq \E [\bar{\phi}_{k} (X)^{\otimes2}]$. In practice, one needs to estimate $\Sigma$ from data. We make the following assumptions on $\Sigma$ and its estimator.

\begin{assumption}
\label{as:Sigma} 
$\Sigma$ is assumed to have bounded spectra and there exists an estimator $\hat{\Sigma}$ of $\Sigma$ such that $\hat{\Sigma}$ also has bounded spectra and $\Vert \hat{\Sigma} - \Sigma\Vert_{\op} = o (1)$, where $\Vert \cdot \Vert_{\op}$ denotes the matrix operator norm. Without loss of generality, we take $\Sigma = \Sigma^{-1} = \mathrm{I}$.
\end{assumption}

We then construct the following second-order estimator for $\psi (a, b)$.
\begin{align*}
& \hat{\psi}_{2, n} (\bar{\phi}_{k}; \hat{\Sigma}) \coloneqq \hat{\psi}_{1, n} + \hat{U}_{n, 2} (\bar{\phi}_{k}; \hat{\Sigma}), \text{ where } \\
& \hat{U}_{n, 2} (\bar{\phi}_{k}; \hat{\Sigma}) \coloneqq \bbU_{n, 2} \left[ (A_{1} - \hat{a} (X_{1})) \bar{\phi}_{k} (X_{1})^{\top} \hat{\Sigma}^{-1} \bar{\phi}_{k} (X_{2}) (Y_{2} - \hat{b} (X_{2})) \right].
\end{align*}

We further introduce some short-hand notation for ease of exposition:
\begin{align*}
& \hat{\varepsilon}_{a} \coloneqq A - \hat{a} (X), \hat{\varepsilon}_{b} \coloneqq Y - \hat{b} (X),\\
& \alpha \coloneqq \E [(a (X) - \hat{a} (X)) \bar{\phi}_{k} (X)], \beta \coloneqq [(b (X) - \hat{b} (X)) \bar{\phi}_{k} (X)], \\
& \Sigma_{a, a} \coloneqq \E [(A - \hat{a} (X))^{2} \bar{\phi}_{k} (X) \bar{\phi}_{k} (X)^{\top}], \Sigma_{b, b} \coloneqq \E [(Y - \hat{b} (X))^{2} \bar{\phi}_{k} (X) \bar{\phi}_{k} (X)^{\top}], \\
& \text{and } \Sigma_{a, b} \coloneqq \E [(A - \hat{a} (X)) (Y - \hat{b} (X)) \bar{\phi}_{k} (X) \bar{\phi}_{k} (X)^{\top}].
\end{align*}

We are now ready to state the following lemma. The proof is by direct calculations and can be found in Appendix~\ref{app:lem:ecc2}.

\begin{lemma}
\label{lem:ecc2} 
The bias and variance of $\hat{\psi}_{2, n} (\bar{\phi}_{k}; \hat{\Sigma})$ read as:
\begin{align*}
\bias (\hat{\psi}_{2, n} (\bar{\phi}_{k}; \hat{\Sigma})) & = - \langle \mathsf{\Pi}^{\perp} [\hat{a} - a \mid \bar{\phi}_{k}], \mathsf{\Pi}^{\perp} [\hat{b} - b \mid \bar{\phi}_{k}] \rangle_{\bbP} + \alpha^{\top} (\hat{\Sigma}^{-1} - \mathrm{I}) \beta \lesssim r_{n}^{1 / 2} \cdot s_{n}^{1 / 2}, \\
\var (\hat{\psi}_{2, n} (\bar{\phi}_{k}; \hat{\Sigma})) & = \var (\hat{\psi}_{1, n}) + \var (\hat{U}_{n, 2} (\bar{\phi}_{k}; \hat{\Sigma})) + 2 \cov (\hat{\psi}_{1, n}, \hat{U}_{n, 2} (\bar{\phi}_{k}; \hat{\Sigma})) \lesssim \frac{1}{n},
\end{align*}
where
\begin{align*}
\var (\hat{\psi}_{1, n}) =  & \ \frac{1}{n} \left\{ \E [\hat{\varepsilon}_{a}^{2} \hat{\varepsilon}_{b}^{2}] - \E^{2} [\hat{\varepsilon}_{a} \hat{\varepsilon}_{b}] \right\}, \\
\var (\hat{U}_{n, 2} (\bar{\phi}_{k}; \hat{\Sigma})) = & \ \frac{1}{n (n - 1)} \Tr \left\{ \Sigma_{a, a} \hat{\Sigma}^{-1} \Sigma_{b, b} \hat{\Sigma}^{-1} + (\Sigma_{a, b} \hat{\Sigma}^{-1})^{2} \right\} \\
& + \frac{n - 2}{n (n - 1)} \left( \alpha^{\top} \hat{\Sigma}^{-1} \Sigma_{b, b} \hat{\Sigma}^{-1} \alpha + \beta^{\top} \hat{\Sigma}^{-1} \Sigma_{a, a} \hat{\Sigma}^{-1} \beta+ 2 \alpha^{\top} \hat{\Sigma}^{-1} \Sigma_{a, b} \hat{\Sigma}^{-1} \beta \right) \\
& - \frac{2 (2 n - 3)}{n (n - 1)} (\alpha^{\top} \hat{\Sigma}^{-1} \beta)^{2}, \\
\leq & \ \frac{C k}{n^{2}} + \frac{C}{n} \left\{ \Vert \mathsf{\Pi} [\hat{a} - a \mid \bar{\phi}_{k}] \Vert_{2, \bbP}^{2} + \Vert \mathsf{\Pi} [\hat{b} - b \mid \bar{\phi}_{k}] \Vert_{2, \bbP}^{2} \right\} \lesssim \frac{1}{n}, \\
2 \mathsf{cov} (\hat{\psi}_{1, n}, \hat{U}_{n, 2} (\bar{\phi}_{k}; \hat{\Sigma})) = & \ \frac{2}{n} \left\{ \E [\hat{\varepsilon}_{a} \hat{\varepsilon}_{b}^{2} \bar{\phi}_{k} (X)^{\top}] \hat{\Sigma}^{-1} \alpha + \E [\hat{\varepsilon}_{a}^{2} \hat{\varepsilon}_{b} \bar{\phi}_{k} (X)^{\top}] \hat{\Sigma}^{-1} \beta - 2 \E [\hat{\varepsilon}_{a} \hat{\varepsilon}_{b}] \alpha^{\top} \hat{\Sigma}^{-1} \beta \right\} \\
\leq & \ \frac{C}{n} \left\{ \Vert \mathsf{\Pi} [\hat{a} - a \mid \bar{\phi}_{k}] \Vert_{2, \bbP} + \Vert \mathsf{\Pi} [\hat{b} - b \mid \bar{\phi}_{k}] \Vert_{2, \bbP} \right\} \lesssim \frac{1}{n}.
\end{align*}
The inequalities hold for $\bbP_{a, b} \in\calP_{\SA} ((\hat{a}, \hat{b}), (r_{n}, s_{n}))$ under Assumption~\ref{as:Sigma}.
\end{lemma}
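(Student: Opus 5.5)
The plan is direct computation, treating $\hat a,\hat b,\hat\Sigma$ as fixed (they come from an independent sample). We write $\hat\psi_{2,n}=\hat\psi_{1,n}+\hat U_{n,2}$ and handle the two pieces separately. All expectations use the identities $\E[\hat\varepsilon_a\mid X]=a(X)-\hat a(X)$ and $\E[\hat\varepsilon_b\mid X]=b(X)-\hat b(X)$.

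\textbf{Bias.} By independence of observations $1$ and $2$,
\[
\E\hat U_{n,2}=\E[\hat\varepsilon_a\bar\phi_k(X)]^\top\hat\Sigma^{-1}\E[\bar\phi_k(X)\hat\varepsilon_b]=\alpha^\top\hat\Sigma^{-1}\beta .
\]
Adding the bias of $\hat\psi_{1,n}$ from Lemma~\ref{lem:ecc} gives
\[
\bias=-\langle a-\hat a,b-\hat b\rangle_{\bbP}+\alpha^\top\beta+\alpha^\top(\hat\Sigma^{-1}-\I)\beta .
\]
Since $\Sigma=\I$, we have $\alpha^\top\beta=\langle\mathsf\Pi[a-\hat a\mid\bar\phi_k],\mathsf\Pi[b-\hat b\mid\bar\phi_k]\rangle_{\bbP}$. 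Splitting each function into its projection and the orthogonal complement, the cross terms vanish, and this yields the stated formula. The bound $\lesssim r_n^{1/2}s_n^{1/2}$ then follows from Cauchy--Schwarz, contraction of projections, $\|\alpha\|_2\le\|a-\hat a\|_{2,\bbP}$, and bounded $\|\hat\Sigma^{-1}-\I\|_{\op}$ (Assumption~\ref{as:Sigma}).

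\textbf{Variance of $\hat U_{n,2}$.} Set $h(O_1,O_2)=\hat\varepsilon_{a,1}\bar\phi_k(X_1)^\top\hat\Sigma^{-1}\bar\phi_k(X_2)\hat\varepsilon_{b,2}$. This kernel is not symmetric, so we use its symmetrization $\bar h=(h(O_1,O_2)+h(O_2,O_1))/2$ and the Hoeffding formula
\[
\var(\bbU_{n,2}\bar h)=\tfrac{4(n-2)}{n(n-1)}\zeta_1+\tfrac{2}{n(n-1)}\zeta_2,
\]
where $\zeta_1=\var(\E[\bar h\mid O_1])$ and $\zeta_2=\var(\bar h)$. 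The projection is
\[
\E[\bar h\mid O_1]=\tfrac12\bigl(\hat\varepsilon_{a,1}\bar\phi_k(X_1)^\top\hat\Sigma^{-1}\beta+\alpha^\top\hat\Sigma^{-1}\bar\phi_k(X_1)\hat\varepsilon_{b,1}\bigr),
\]
so its second moment produces the $\alpha^\top\hat\Sigma^{-1}\Sigma_{b,b}\hat\Sigma^{-1}\alpha$-type quadratic forms. The second moment of $\bar h$ produces
\[
\tfrac12\Tr\bigl(\Sigma_{a,a}\hat\Sigma^{-1}\Sigma_{b,b}\hat\Sigma^{-1}\bigr)+\tfrac12\Tr\bigl((\Sigma_{a,b}\hat\Sigma^{-1})^2\bigr).
\]
Subtracting squared means and collecting the $(\alpha^\top\hat\Sigma^{-1}\beta)^2$ terms gives the coefficient $-2(2n-3)/\{n(n-1)\}$. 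This bookkeeping is the main obstacle. I would carry it out by expanding all $(i,j,i',j')$ index overlaps directly rather than through $\zeta$'s, which serves as a check.

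For the bound, assume bounded conditional second and fourth moments of $\hat\varepsilon_a,\hat\varepsilon_b$ (implicit in the model). Then $\Sigma_{a,a},\Sigma_{b,b},\Sigma_{a,b}\preceq C\Sigma=C\I$, so each trace term is at most $Ck$. For the quadratic forms, $\alpha^\top\hat\Sigma^{-1}\Sigma_{b,b}\hat\Sigma^{-1}\alpha\le C\|\alpha\|_2^2=C\|\mathsf\Pi[\hat a-a\mid\bar\phi_k]\|^2_{2,\bbP}$, and similarly for $\beta$; the cross form is handled by Cauchy--Schwarz. Finally $r_n,s_n\lesssim1$ gives $\lesssim1/n$.

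\textbf{Covariance.} $\cov(\hat\psi_{1,n},\hat U_{n,2})$ only involves pairs sharing an index. Using $\cov(\bbU_{n,1}g,\bbU_{n,2}\bar h)=\tfrac2n\cov(g(O_1),\E[\bar h\mid O_1])$ with $g=\hat\varepsilon_a\hat\varepsilon_b$ and the displayed projection gives
\[
\tfrac1n\bigl\{\E[\hat\varepsilon_a^2\hat\varepsilon_b\bar\phi_k^\top]\hat\Sigma^{-1}\beta+\E[\hat\varepsilon_a\hat\varepsilon_b^2\bar\phi_k^\top]\hat\Sigma^{-1}\alpha-2\E[\hat\varepsilon_a\hat\varepsilon_b]\alpha^\top\hat\Sigma^{-1}\beta\bigr\},
\]
and twice this is the stated expression. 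Each term is bounded by Cauchy--Schwarz: for example, $\|\E[\hat\varepsilon_a\hat\varepsilon_b^2\bar\phi_k]\|_2\le C$ because it is the coefficient vector of the projection of a bounded-$L^2$ function onto an orthonormal dictionary. The $\alpha^\top\hat\Sigma^{-1}\beta$ term is bounded by a product of the two norms, which is at most a sum since both are $\lesssim1$. This yields the $\tfrac Cn(\|\mathsf\Pi[\cdot]\|+\|\mathsf\Pi[\cdot]\|)$ bound.
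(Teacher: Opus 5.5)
Your proposal is correct and follows essentially the paper's argument: a direct moment computation with $\hat a,\hat b,\hat\Sigma$ held fixed. Packaging it through the symmetrized kernel, $\zeta_1,\zeta_2$ and the projection $\E[\bar h\mid O_1]$ is only a tidier bookkeeping of the paper's explicit index-overlap expansion, and it reproduces the paper's results exactly: the coefficient $-\{4(n-2)+2\}/\{n(n-1)\}=-2(2n-3)/\{n(n-1)\}$ and the covariance $\tfrac{2}{n}\cov(g(O_1),\E[\bar h\mid O_1])$.

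You also prove the inequalities, which the paper leaves unproved. Those bounds rest on bounded conditional moments of $\hat\varepsilon_a,\hat\varepsilon_b$, and the final $\lesssim 1/n$ for $\var(\hat U_{n,2})$ additionally needs $k\lesssim n$.
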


It is not difficult to also see that $\hat{\psi}_{2, n} (\bar{\phi}_{k}; \hat{\Sigma})$ corrects the bias of $\hat{\psi}_{1, n}$ by estimating a lower bound of $\bias (\hat{\psi}_{1, n}) \lesssim r_{n}^{1 / 2} \cdot s_{n}^{1 / 2}$.

\begin{remark}
\label{rem:Sigma_hat} 
In \citet{liu2020nearly} and \citet{liu2017semiparametric}, we have shown that when $\hat{\Sigma}$ is the sample Gram matrix estimator $\Vert \hat{\Sigma} - \mathrm{I} \Vert_{\op} = \sqrt{k \log k / n} = o (1)$ when $k = o (n / \log^{2} n)$ when the sample used to compute $\hat{\Sigma}$ is also of size $n$ \citep{tropp2015introduction}. When we know $\Sigma = \mathrm{I}$, $\bias (\hat{\psi}_{2, n} (\bar{\phi}_{k}; \mathrm{I}))$ is reduced to $- \langle \mathsf{\Pi}^{\perp} [\hat{a} - a \mid \bar{\phi}_{k}], \mathsf{\Pi}^{\perp} [\hat{b} - b \mid \bar{\phi}_{k}] \rangle_{\bbP} \lesssim r_{n}^{1 / 2} \cdot s_{n}^{1 / 2}$ because there is no extra bias due to estimating $\Sigma$. However, even if we estimate $\Sigma$ by $\hat{\Sigma}$, the extra bias incurred is of the form
\begin{align*}
\alpha^{\top} (\hat{\Sigma}^{-1} - \mathrm{I}) \beta \lesssim \Vert \mathsf{\Pi} [\hat{a} - a \mid \bar{\phi}_{k}] \Vert_{2, \bbP} \Vert\mathsf{\Pi} [\hat{b} - a \mid \bar{\phi}_{k}] \Vert_{2, \bbP} \Vert\hat{\Sigma}^{-1} - \mathrm{I} \Vert_{\op} = o (r_{n}^{1 / 2} \cdot s_{n}^{1 / 2}).
\end{align*}
Thus as long as we have a consistent estimator of $\Sigma$, the second-order estimator is still asymptotically minimax under the SA model.
\end{remark}

\begin{theorem}
\label{thm:ecc} 
Under Model $\calP_{\SA} ((\hat{a}, \hat{b}), (r_{n}, s_{n}))$, both $\hat{\psi}_{1, n}$ and $\hat{\psi}_{2, n} (\bar{\phi}_{k}; \hat{\Sigma})$ are asymptotically minimax, as long as $\Sigma$ and $\hat{\Sigma}$ satisfy Assumption~\ref{as:Sigma}. The same conclusions hold when we replace the SA model $\calP_{\SA} ((\hat{a}, \hat{b}), (r_{n}, s_{n}))$ with the assumption-lean model $\calP_{\AL} ((\hat{a}, \hat{b}))$.
\end{theorem}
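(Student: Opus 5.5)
The claim is that $\sup_{\bbP_{a,b}\in\calP_{\SA}}\mse \lesssim r_n s_n + 1/n$ holds for each of the two estimators. Combined with the lower bound of Lemma~\ref{lem:ecc}, this gives asymptotic minimaxity. For $\hat{\psi}_{1, n}$ the upper bound is already contained in Lemma~\ref{lem:ecc}. Cauchy--Schwarz gives $|\langle a-\hat a, b-\hat b\rangle_{\bbP}|\le \Vert a-\hat a\Vert_{2,\bbP}\Vert b-\hat b\Vert_{2,\bbP}\lesssim (r_n s_n)^{1/2}$. The variance is $O(1/n)$ uniformly, provided we adopt the standing boundedness convention that $A$, $Y$, $\hat a$ and $\hat b$ have bounded (conditional) fourth moments. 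I will state this convention explicitly, since it is implicit in the $\lesssim 1/n$ bounds of Lemmas~\ref{lem:ecc} and \ref{lem:ecc2}. So the real work is the second-order estimator.

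For $\hat{\psi}_{2, n} (\bar{\phi}_{k}; \hat{\Sigma})$ I will bound squared bias and variance separately using Lemma~\ref{lem:ecc2}, treating $\hat\Sigma$ as computed from an independent sample and hence fixed.

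\emph{Bias.} Projections are $L^2(\bbP)$-contractions, so
\[
|\langle \mathsf{\Pi}^{\perp}[\hat a-a\mid\bar\phi_k],\mathsf{\Pi}^{\perp}[\hat b-b\mid\bar\phi_k]\rangle_{\bbP}|\le \Vert \hat a-a\Vert_{2,\bbP}\Vert\hat b-b\Vert_{2,\bbP}.
\]
Under Assumption~\ref{as:Sigma} with $\Sigma=\mathrm I$, we have $\Vert\alpha\Vert_2=\Vert\mathsf{\Pi}[a-\hat a\mid\bar\phi_k]\Vert_{2,\bbP}$, and similarly for $\beta$. Then
\[
|\alpha^\top(\hat\Sigma^{-1}-\mathrm I)\beta|\le \Vert\alpha\Vert_2\Vert\beta\Vert_2\Vert\hat\Sigma^{-1}-\mathrm I\Vert_{\op}.
\]
The operator-norm factor is $o(1)$, because $\hat\Sigma$ has bounded spectrum and $\Vert\hat\Sigma^{-1}-\mathrm I\Vert_{\op}\le\Vert\hat\Sigma^{-1}\Vert_{\op}\Vert\hat\Sigma-\mathrm I\Vert_{\op}$. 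Together these give $\bias^2\lesssim r_ns_n$ uniformly over the model, which is the content of Remark~\ref{rem:Sigma_hat}.

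\emph{Variance.} I will bound each of the three terms in the decomposition of Lemma~\ref{lem:ecc2}. First, $\var(\hat\psi_{1,n})\lesssim 1/n$ as above. Second, for $\var(\hat U_{n,2})$, the trace term is handled by $\Tr(\Sigma_{a,a}\hat\Sigma^{-1}\Sigma_{b,b}\hat\Sigma^{-1})\le k\Vert\Sigma_{a,a}\Vert_{\op}\Vert\Sigma_{b,b}\Vert_{\op}\Vert\hat\Sigma^{-1}\Vert_{\op}^2$, and similarly for $(\Sigma_{a,b}\hat\Sigma^{-1})^2$. The operator norms of $\Sigma_{a,a},\Sigma_{b,b},\Sigma_{a,b}$ are bounded because $\Sigma_{a,a}\preceq \sup_x\E[\hat\varepsilon_a^2\mid X=x]\,\Sigma$. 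So the trace term is $O(k/n^2)$. The quadratic-form terms are $O(n^{-1}(\Vert\alpha\Vert_2^2+\Vert\beta\Vert_2^2))$, and the last term is nonpositive. Third, the covariance term is handled by Cauchy--Schwarz:
\[
|\E[\hat\varepsilon_a\hat\varepsilon_b^2\bar\phi_k^\top]\hat\Sigma^{-1}\alpha|\le \Vert \E[\hat\varepsilon_a\hat\varepsilon_b^2\mid X]\Vert_{2,\bbP}\,\Vert\hat\Sigma^{-1}\alpha\Vert_2,
\]
where I use that $\bar\phi_k^\top\hat\Sigma^{-1}\alpha$ has $L^2(\bbP)$-norm bounded by a constant times $\Vert\alpha\Vert_2$ since $\Sigma=\mathrm I$. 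This yields $O(n^{-1}(\Vert\alpha\Vert_2+\Vert\beta\Vert_2))$.

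Finally, on the SA model we have $\Vert\alpha\Vert_2\lesssim r_n^{1/2}$ and $\Vert\beta\Vert_2\lesssim s_n^{1/2}$, and these are bounded. Together with $k\le n$, this gives $\var\lesssim 1/n$. Summing, $\mse(\hat\psi_{2,n})\lesssim r_ns_n+1/n$ uniformly, which matches the lower bound of Lemma~\ref{lem:ecc} up to constants.

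For the assumption-lean model $\calP_{\AL}((\hat a,\hat b))$ I will simply set $r_n=s_n=R^\ast$ in every bound above. The lower bound in Lemma~\ref{lem:ecc} holds for arbitrary $(r_n,s_n)$, and all the upper bounds used only $\Vert a-\hat a\Vert_{2,\bbP}^2\le R^\ast$ and $\Vert b-\hat b\Vert_{2,\bbP}^2\le R^\ast$, so both estimators attain the order-one minimax rate. The step I expect to be the main obstacle is making the variance bounds uniform. This requires controlling $\Vert\Sigma_{a,a}\Vert_{\op}$, $\Vert\Sigma_{b,b}\Vert_{\op}$ and the conditional third moments uniformly over the model. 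I would handle it by adding, or reading into the model, uniform bounds on the conditional second and fourth moments of $\hat\varepsilon_a$ and $\hat\varepsilon_b$ given $X$. I would also note that when $k/n\not\to0$ the estimator remains rate-minimax, since $k/n^2\le 1/n$.
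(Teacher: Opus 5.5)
Your proof is correct and follows the same route as the paper. The paper also takes the lower bound, and its attainment by $\hat{\psi}_{1, n}$, from Lemma~\ref{lem:ecc}; it then concludes minimaxity of $\hat{\psi}_{2, n} (\bar{\phi}_{k}; \hat{\Sigma})$ from the bias order $r_{n}^{1/2} s_{n}^{1/2}$ and variance order $1/n$ in Lemma~\ref{lem:ecc2}, with the assumption-lean case obtained by fixing the radii. The difference is that you verify the inequalities asserted in Lemma~\ref{lem:ecc2} yourself (projection contraction, operator-norm and trace bounds, Cauchy--Schwarz on the covariance term), and you state explicitly the uniform conditional moment bounds that the paper's $\lesssim 1/n$ claims tacitly assume.
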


\begin{proof}
The minimaxity of $\hat{\psi}_{2, n} (\bar{\phi}_{k}; \hat{\Sigma})$ can be concluded using the orders of its bias and variance shown in Lemma~\ref{lem:ecc2}.
\end{proof}

\begin{remark}
\label{rem:ate} 
Similar statements to those in Theorem~\ref{thm:ecc} hold for the average treatment effect and the average treatment effect on the treated. For instance, for the treatment specific mean, this can be seen by replacing the notation $a, b, \hat{\varepsilon}_{a}, \hat{\varepsilon}_{b}$ by the following instead:
\begin{align*}
& a (\cdot) = 1 / \E [A \mid X = \cdot], b (\cdot) = \E [Y \mid X = \cdot, A = 1], \hat{\varepsilon}_{a} = A \hat{a} (X) - 1, \hat{\varepsilon}_{b} = A (Y - \hat{b} (X)).
\end{align*}
The dictionary $\bar{\phi}_{k}$ will also be weighted by the treatment indicator $A \bar{\phi}_{k}$. The minimaxity of the first-order DML estimators of these two functionals has been shown in \citet{jin2025structure}.
\end{remark}

\begin{remark}
\label{rem:dr} 
As indicated after Definition~\ref{def:al}, we will discuss in Section~\ref{sec:sa} that the higher-order generalization of the second-order estimators can be used to falsify the null hypothesis $\calH_{0}: \bias (\hat{\psi}_{1, n}) \ll n^{-1/2}$ when $\psi$ belongs to the \emph{monotone bias class}. If $\psi$ is the expected conditional covariance or the treatment specific mean parameter mentioned in Remark~\ref{rem:ate}, $\psi$ belongs to the so-called mixed-bias class \citep{rotnitzky2021characterization} but not the \emph{monotone bias class}. Here, we cannot claim the asymptotic inadmissibility of the DML estimator $\hat{\psi}_{1, n}$ of $\psi$ and similarly we cannot directly falsify $\calH_{0}: \bias (\hat{\psi}_{1, n}) \ll n^{-1 / 2}$. Nonetheless, we can empirically falsify the rate-double-robustness of $\hat{\psi}_{1, n}$ \citep{liu2024assumption}, where rate-double-robustness refers to the assumption $r_{n}^{1 / 2} \cdot s_{n}^{1 / 2} = o(n^{-1/2})$ for both the expected conditional covariance or the treatment specific mean parameter.
\end{remark}

\subsection*{Specializing to the Expected Conditional Variance}

A special case of the ECC functional--the expected conditional variance (abbreviated as the ECV functional) $\psi (a) \equiv \psi (a, a)$, however, belongs to the \emph{monotone bias class}, when $A = Y$ with probability 1. Here, the corresponding DML estimator is $\hat{\psi}_{1, n} \coloneqq n^{-1} \sum_{i = 1}^{n} (A_{i} - \hat{a} (X_{i}))^{2}$ and the corresponding SA model is defined as
\begin{align*}
\calP_{\SA} (\hat{a}, r_{n}) \equiv \calP_{\SA} ((\hat{a}, \hat{a}), (r_{n}, r_{n})) \coloneqq \Big\{ \bbP_{a}: \Vert \hat{a} - a \Vert_{2, \bbP}^{2} \leq r_{n} \Big\}.
\end{align*}

\begin{remark}
\label{rem:cf2} 
Similar to Remark~\ref{rem:cf1}, we use the same estimator $\hat{a} \equiv\hat{a}_{1} \equiv\hat{a}_{2}$ to compute $\hat{\psi}_{1, n}$, again excluding the estimators studied in \citet{newey2018cross, mcgrath2026nuisance, mcclean2026double}.
\end{remark}

Analogously, the second-order estimator for $\psi(a)$ takes the following form:
\begin{align*}
& \hat{\psi}_{2, n} (\bar{\phi}_{k}; \hat{\Sigma}) \coloneqq \hat{\psi}_{1, n} + \hat{U}_{n, 2} (\bar{\phi}_{k}; \hat{\Sigma}), \text{ where } \\
& \hat{U}_{n, 2} (\bar{\phi}_{k}; \hat{\Sigma}) \coloneqq \bbU_{n, 2} [(A_{1} - \hat{a} (X_{1})) \bar{\phi}_{k} (X_{1})^{\top} \hat{\Sigma}^{-1} \bar{\phi}_{k} (X_{2}) (A_{2} - \hat{a} (X_{2}))].
\end{align*}
Lemma~\ref{lem:ecc} and Lemma~\ref{lem:ecc2} immediately imply the two corollaries below for the ECV functional $\psi(a)$.

\begin{corollary}
\label{cor:ecc} 
The following hold. There exists a universal constant $C > 0$ such that
\begin{align*}
\mathfrak{R}_{n} (\psi; \calP_{\SA} (\hat{a}, r_{n})) := \inf_{\hat{\psi}} \sup_{\bbP_{a} \in\calP_{\SA} (\hat{a}, r_{n})} \E_{a} [(\hat{\psi} - \psi(a))^{2}] \geq C \Big( r_{n}^{2} + \frac{1}{n} \Big).
\end{align*}
This lower bound is attained by the first-order estimator $\hat{\psi}_{1, n}$. The bias, variance, and MSE of $\hat{\psi}_{1, n}$ have the following forms:
\begin{align*}
\bias (\hat{\psi}_{1, n}) & = - \Vert a - \hat{a} \Vert_{2, \bbP}^{2} = - \, \Vert\mathsf{\Pi}^{\perp} [\hat{a} - a \mid \bar{\phi}_{k}] \Vert_{2, \bbP}^{2} - \alpha^{\top} \alpha, \\
\var (\hat{\psi}_{1, n}) & = \frac{1}{n} \left\{ \E [\hat{\varepsilon}_{a}^{4}] - \E^{2} [\hat{\varepsilon}_{a}^{2}] \right\}, \text{ and} \\
\mse (\hat{\psi}_{1, n}) & = \Vert a - \hat{a} \Vert_{2, \bbP}^{4} + \frac{1}{n} \left\{  \E [\hat{\varepsilon}_{a}^{4}] - \E^{2} [\hat{\varepsilon}_{a}^{2}] \right\}.
\end{align*}
\end{corollary}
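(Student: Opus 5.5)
The plan is to specialize Lemma~\ref{lem:ecc} to $A = Y$ almost surely, so that $b \equiv a$, $\hat{b} \equiv \hat{a}$, $s_{n} = r_{n}$ and $\hat{\varepsilon}_{b} = \hat{\varepsilon}_{a}$. The corollary splits into three pieces: the upper-bound formulas, the bias decomposition, and the minimax lower bound.

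\textbf{Upper-bound formulas.} Substituting $b = a$ and $\hat{b} = \hat{a}$ into the bias, variance and MSE formulas of Lemma~\ref{lem:ecc} gives
\begin{align*}
\bias (\hat{\psi}_{1, n}) = - \langle a - \hat{a}, a - \hat{a} \rangle_{\bbP} = - \Vert a - \hat{a} \Vert_{2, \bbP}^{2}.
\end{align*}
Similarly, $\var (\hat{\psi}_{1, n}) = n^{-1} \{ \E [\hat{\varepsilon}_{a}^{4}] - \E^{2} [\hat{\varepsilon}_{a}^{2}] \}$, and the MSE is the squared bias plus this variance. These three computations can also be checked directly. The estimator $\hat{\psi}_{1, n}$ is a sample mean of i.i.d. terms $\hat{\varepsilon}_{a, i}^{2}$, and
\begin{align*}
\E [\hat{\varepsilon}_{a}^{2}] = \E [(A - a (X))^{2}] + \Vert a - \hat{a} \Vert_{2, \bbP}^{2}
\end{align*}
by orthogonality of $A - a (X)$ to functions of $X$.

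\textbf{Bias decomposition.} To obtain $- \Vert \mathsf{\Pi}^{\perp} [\hat{a} - a \mid \bar{\phi}_{k}] \Vert_{2, \bbP}^{2} - \alpha^{\top} \alpha$, I use Pythagoras in $L^{2} (\bbP)$:
\begin{align*}
\Vert a - \hat{a} \Vert_{2, \bbP}^{2} = \Vert \mathsf{\Pi} [\hat{a} - a \mid \bar{\phi}_{k}] \Vert_{2, \bbP}^{2} + \Vert \mathsf{\Pi}^{\perp} [\hat{a} - a \mid \bar{\phi}_{k}] \Vert_{2, \bbP}^{2}.
\end{align*}
Under Assumption~\ref{as:Sigma} with $\Sigma = \mathrm{I}$, the projection equals $\alpha^{\top} \bar{\phi}_{k}$, where $\alpha = \E [(a - \hat{a}) \bar{\phi}_{k}]$. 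Its squared norm is therefore $\alpha^{\top} \Sigma \alpha = \alpha^{\top} \alpha$.

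\textbf{Minimax lower bound.} This is the main obstacle. The lower bound in Lemma~\ref{lem:ecc} is taken over the larger ECC model, in which $a$ and $b$ vary independently. A supremum over a submodel can be smaller, so the bound does not automatically transfer to the ECV model. I would rerun the two-part lower-bound argument of \citet{balakrishnan2026fundamental} with the perturbations tied together, $b = a$.

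The $1/n$ term comes from a standard two-point (Le Cam) argument. Take $a = \hat{a}$ and perturb the conditional law of $A$ given $X$, for instance its variance, by $n^{-1/2}$. This changes $\psi (a) = \E [\var (A \mid X)]$ by order $n^{-1/2}$ while keeping the Hellinger distance between the $n$-fold products bounded.

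The $r_{n}^{2}$ term uses a fuzzy-hypotheses construction. Set $a = \hat{a} + \sqrt{r_{n}} \sum_{j} \lambda_{j} h_{j}$ with random signs $\lambda_{j}$ over many bumps $h_{j}$, so $\Vert a - \hat{a} \Vert^{2} = r_{n}$. Adjust the conditional noise variance so that the marginal law of the data is nearly matched to a null in which $a = \hat{a}$. The functional then differs by $\Vert a - \hat{a} \Vert^{2} = r_{n}$, because $\E [\var (A \mid X)]$ shifts by the squared perturbation when the law of $A$ given $X$ is held roughly fixed. The chi-square divergence is controlled by taking the number of bumps large, as in their ECC proof with $s_{n} = r_{n}$.

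I expect the delicate step to be verifying that their mixture construction remains valid under the constraint $b = a$, i.e., that the same prior on $\hat{a} - a$ serves both nuisances. Since their ECC construction can be taken symmetric in $a$ and $b$, I would first check whether it already uses identical perturbations. If so, it restricts directly to the diagonal and the lower bound follows.
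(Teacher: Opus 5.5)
\textbf{Comparison of routes.} The paper's proof is a single line. It sets $A = Y$ (so $b = a$, $\hat{b} = \hat{a}$, $s_{n} = r_{n}$) in Lemma~\ref{lem:ecc} and treats every claim of the corollary as immediate, the minimax lower bound included. Your handling of the variance and MSE formulas, the Pythagorean split of $\Vert a - \hat{a} \Vert_{2, \bbP}^{2}$ with $\Sigma = \mathrm{I}$, and attainment by $\hat{\psi}_{1, n}$ matches that specialization; worst-case risk over a submodel can only decrease.

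On the lower bound you depart from the paper, and you are right to. The ECV model is the diagonal submodel $\{A = Y\}$ of the ECC model, and a lower bound on a model's minimax risk does not pass to a submodel. So the paper's ``immediately'' does not by itself prove that part. Your route gives a genuine proof, but you have not yet pinned down the construction. The ``delicate step'' you worry about can be bypassed in either of two ways.
\begin{itemize}
\item \emph{Direct construction.} Under the null, take $A = \hat{a}(X) + \sigma S$ with $S = \pm 1$ equally likely given $X$. Under $\lambda \in \{\pm 1\}^{k}$, keep the same support but set $\Pr_{\lambda}(S = 1 \mid X) = \frac{1}{2}\{1 + \sigma^{-1} r_{n}^{1/2} \sum_{j} \lambda_{j} h_{j}(X)\}$. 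Here $h_{j}$ is the indicator of the $j$-th of $k$ equal-volume cells and $\sigma^{2} \geq r_{n}$. Then $\Vert a_{\lambda} - \hat{a} \Vert_{2, \bbP}^{2} = r_{n}$, and $\psi = \sigma^{2} - r_{n}$ for every $\lambda$, against $\sigma^{2}$ under the null. The $\chi^{2}$-divergence between the $n$-fold null and the uniform mixture of the $n$-fold alternatives is at most $\exp\{n^{2} r_{n}^{2} / (2 \sigma^{4} k)\} - 1$. This is negligible once $k \gg n^{2} r_{n}^{2}$, which the SA model permits because it imposes no smoothness.
\item \emph{Polarization, with no construction.} Use $\cov(A, Y \mid X) = \frac{1}{4}\{\var(A + Y \mid X) - \var(A - Y \mid X)\}$ and $\Vert (a \pm b) - (\hat{a} \pm \hat{b}) \Vert_{2, \bbP}^{2} \leq 2 (r_{n} + s_{n})$. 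The ECV minimax risk does not depend on the center: replace $A$ by $A - \hat{c}(X)$, noting that the ECV model restricts only the regression function. Hence an ECV estimator with risk $\ll \rho^{2} + 1/n$ at radius $\rho = 4 r_{n}$ would yield an ECC estimator violating Lemma~\ref{lem:ecc} at $s_{n} = r_{n}$.
\end{itemize}

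\textbf{An error in the bias sign.} Your ``direct check'' contradicts the sign you state rather than confirming it. From $\E[\hat{\varepsilon}_{a}^{2}] = \E[(A - a(X))^{2}] + \Vert a - \hat{a} \Vert_{2, \bbP}^{2}$ and $\psi(a) = \E[(A - a(X))^{2}]$, you get $\E[\hat{\psi}_{1, n}] - \psi(a) = + \Vert a - \hat{a} \Vert_{2, \bbP}^{2}$.

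The minus sign is inherited from Lemma~\ref{lem:ecc}, where likewise $\E[\hat{\varepsilon}_{a} \hat{\varepsilon}_{b}] - \psi(a, b) = + \langle a - \hat{a}, b - \hat{b} \rangle_{\bbP}$. It clashes with the convention $\bias = \E \hat{\psi} - \psi$, under which Lemma~\ref{lem:quad gsm} is correct. The slip does not affect the variance, MSE or minimax claims, which involve only the squared bias.

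You should nevertheless write the bias as $+\Vert \mathsf{\Pi}^{\perp}[\hat{a} - a \mid \bar{\phi}_{k}] \Vert_{2, \bbP}^{2} + \alpha^{\top}\alpha$ and flag the discrepancy, instead of asserting that substitution and direct computation agree. The same slip is why $\hat{U}_{n, 2}$ should be subtracted, not added, in the second-order estimator used in Corollary~\ref{cor:ecv2}.
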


\begin{corollary}
\label{cor:ecv2} 
The bias and variance of $\hat{\psi}_{2, n} (\bar{\phi}_{k}; \hat{\Sigma})$ read as:
\begin{align*}
\bias (\hat{\psi}_{2, n} (\bar{\phi}_{k}; \hat{\Sigma})) & = - \Vert\mathsf{\Pi}^{\perp} [\hat{a} - a \mid \bar{\phi}_{k}] \Vert_{2, \bbP}^{2} + \alpha^{\top} (\hat{\Sigma}^{-1} - \mathrm{I}) \alpha\lesssim r_{n},\\
\var (\hat{\psi}_{2, n} (\bar{\phi}_{k}; \hat{\Sigma})) & = \var (\hat{\psi}_{1, n}) + \var (\hat{U}_{n, 2} (\bar{\phi}_{k}; \hat{\Sigma})) + 2 \mathsf{cov} (\hat{\psi}_{1, n}, \hat{U}_{n, 2} (\bar{\phi}_{k}; \hat{\Sigma})) \lesssim\frac{1}{n},
\end{align*}
where
\begin{align*}
\var (\hat{\psi}_{1, n}) =  &  \ \frac{1}{n} \var (\hat{\varepsilon}_{a}^{2}), \\
\var (\hat{U}_{n, 2} (\bar{\phi}_{k}; \hat{\Sigma})) = & \ \frac{2}{n (n - 1)} \mathsf{Tr} \left\{  (\Sigma_{a, a} \hat{\Sigma}^{-1})^{2} \right\}  + \frac{4 n - 8}{n (n - 1)} \left(  \alpha^{\top} \hat{\Sigma}^{-1} \Sigma_{a, a} \hat{\Sigma}^{-1} \alpha\right) \\
& - \frac{4 n - 6}{n (n - 1)} (\alpha^{\top} \hat{\Sigma}^{-1} \alpha)^{2} \\
\leq & \ \frac{C k}{n^{2}} + \frac{C}{n} \Vert\mathsf{\Pi}[\hat{a} - a \mid \bar{\phi}_{k}] \Vert_{2, \bbP}^{2} \lesssim\frac{1}{n}, \\
2 \mathsf{cov} (\hat{\psi}_{1, n}, \hat{U}_{n, 2} (\bar{\phi}_{k}; \hat{\Sigma})) = & \ \frac{4}{n} \left\{  \E [\hat{\varepsilon}_{a}^{3} \bar{\phi}_{k} (X)^{\top}] \hat{\Sigma}^{-1} \alpha- \E [\hat{\varepsilon}_{a}^{2}] \alpha^{\top} \hat{\Sigma}^{-1} \alpha\right\} \\
\leq & \ \frac{C}{n} \Vert\mathsf{\Pi}[\hat{a} - a \mid \bar{\phi}_{k}] \Vert_{2, \bbP} \lesssim\frac{1}{n}.
\end{align*}
The inequalities hold for $\bbP_{a} \in\calP_{\SA} (\hat{a}, r_{n})$ under Assumption~\ref{as:Sigma}.
\end{corollary}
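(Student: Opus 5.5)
The plan is to obtain Corollary~\ref{cor:ecv2} by specializing Lemma~\ref{lem:ecc2} to $A = Y$ almost surely, so that $b \equiv a$. Because the paper uses the same nuisance estimate for both roles (Remark~\ref{rem:cf2}), we also have $\hat{b} \equiv \hat{a}$ and $\hat{\varepsilon}_{b} = \hat{\varepsilon}_{a}$. Consequently $\beta = \alpha$ and $\Sigma_{b,b} = \Sigma_{a,b} = \Sigma_{a,a}$. The second-order statistic $\hat{U}_{n,2}(\bar{\phi}_{k};\hat{\Sigma})$ for the ECV functional is then literally the ECC statistic evaluated at this law. All the identities in Lemma~\ref{lem:ecc2} remain valid, since they hold for any law in the ECC model with $s_{n} = r_{n}$.

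The key steps, in order, are as follows.

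\emph{Bias.} Substituting into the bias formula of Lemma~\ref{lem:ecc2} gives
\[
-\langle \mathsf{\Pi}^{\perp}[\hat{a}-a \mid \bar{\phi}_{k}], \mathsf{\Pi}^{\perp}[\hat{a}-a \mid \bar{\phi}_{k}]\rangle_{\bbP} + \alpha^{\top}(\hat{\Sigma}^{-1}-\mathrm{I})\alpha ,
\]
which is the claimed expression. The bound $\lesssim r_{n}$ follows from $r_{n}^{1/2}s_{n}^{1/2} = r_{n}$, or directly from $\Vert\mathsf{\Pi}^{\perp}(\cdot)\Vert \leq \Vert\cdot\Vert$, $\Vert\alpha\Vert_{2} \leq \Vert a-\hat{a}\Vert_{2,\bbP}$ and the bounded spectra in Assumption~\ref{as:Sigma}.

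\emph{Variance of the first-order term.} This reduces to $n^{-1}\{\E[\hat{\varepsilon}_{a}^{4}] - \E^{2}[\hat{\varepsilon}_{a}^{2}]\} = n^{-1}\var(\hat{\varepsilon}_{a}^{2})$.

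\emph{Variance of $\hat{U}_{n,2}$.} The trace term becomes $\frac{2}{n(n-1)}\Tr\{(\Sigma_{a,a}\hat{\Sigma}^{-1})^{2}\}$. The three quadratic forms merge into $\frac{4(n-2)}{n(n-1)}\alpha^{\top}\hat{\Sigma}^{-1}\Sigma_{a,a}\hat{\Sigma}^{-1}\alpha$. The last term is $-\frac{2(2n-3)}{n(n-1)}(\alpha^{\top}\hat{\Sigma}^{-1}\alpha)^{2}$, and $2(2n-3) = 4n-6$ matches the stated constant.

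\emph{Covariance.} The two cross terms coincide, giving $\frac{4}{n}\{\E[\hat{\varepsilon}_{a}^{3}\bar{\phi}_{k}(X)^{\top}]\hat{\Sigma}^{-1}\alpha - \E[\hat{\varepsilon}_{a}^{2}]\alpha^{\top}\hat{\Sigma}^{-1}\alpha\}$.

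The inequalities are inherited from Lemma~\ref{lem:ecc2} by setting $b = a$. The two norms $\Vert\mathsf{\Pi}[\hat{a}-a\mid\bar{\phi}_{k}]\Vert$ and $\Vert\mathsf{\Pi}[\hat{b}-b\mid\bar{\phi}_{k}]\Vert$ merge into one, and constants are absorbed into $C$.

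The main obstacle is not the algebra but making sure the specialization is legitimate. One worry is that the covariance formula in Lemma~\ref{lem:ecc2} may have been derived assuming some structure (for instance bounded moments of $\hat{\varepsilon}_{a}\hat{\varepsilon}_{b}$) that must still hold when $\hat{\varepsilon}_{b} = \hat{\varepsilon}_{a}$. Another is that the $U$-statistic kernel should remain in the same (asymmetric) form, so that the Hoeffding decomposition constants carry over unchanged.

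To settle both points, I would quickly re-derive the Hoeffding projection of the symmetrized kernel. Its first-order projection is $\hat{\varepsilon}_{a}\bar{\phi}_{k}(X)^{\top}\hat{\Sigma}^{-1}\alpha$, doubled under symmetry, and its degenerate part has second moment $\Tr\{(\Sigma_{a,a}\hat{\Sigma}^{-1})^{2}\}$ up to $O(\Vert\alpha\Vert^{2})$ corrections. This confirms the coefficients $2$, $4n-8$ and $4n-6$.

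The final bounds $\lesssim 1/n$ use three facts: $\Tr\{(\Sigma_{a,a}\hat{\Sigma}^{-1})^{2}\} \leq Ck$, bounded conditional fourth moments of $\hat{\varepsilon}_{a}$ (implicit in the model), and $k = o(n)$.
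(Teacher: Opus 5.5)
Your proposal is correct and follows the same route as the paper. The paper obtains Corollary~\ref{cor:ecv2} directly from Lemma~\ref{lem:ecc2} by setting $A = Y$, $b = a$, $\hat{b} = \hat{a}$, so that $\beta = \alpha$ and $\Sigma_{b,b} = \Sigma_{a,b} = \Sigma_{a,a}$; your coefficient bookkeeping ($2$, $4n-8$, $4n-6$, and the factor $4/n$ in the covariance) matches, and your optional Hoeffding re-check is consistent ($4(n-2)+2 = 4n-6$) but not something the paper needs.
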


By Corollary~\ref{cor:ecv2}, $\psi (a)$ belongs to the \emph{monotone bias class}. By piecing together the above two corollaries, we obtain the final theoretical result of this paper. There always exists a distribution in $\calP_{\SA} (\hat{a}, r_{n})$ such that Definition~\ref{def:mbc}(1) holds. To see this, consider the case where $a = \hat{a} + r_{n}^{1 / 2} \beta^{\top} \bar{\phi}_{k}$ with $\Vert \beta \Vert_{2} = 1$, for which $\bias (\hat{\psi}_{2, n} (\bar{\phi}_{k}; \hat{\Sigma})) = \alpha^{\top} (\hat{\Sigma}^{-1} - \mathrm{I}) \alpha \ll \bias (\hat{\psi}_{1, n}) = \alpha^{\top} \alpha$. The rest of the proof can be found in Appendix~\ref{app:ecv}.

\begin{theorem}
\label{thm:ecv} 
Choosing $k$ as in Remark~\ref{rem:Sigma_hat} such that $k = o (n)$ and $\Vert \hat{\Sigma}^{-1} - \Sigma^{-1} \Vert_{\op} = o (1)$. Under Model $\calP_{\SA} (\hat{a}, r_{n})$ and Assumption~\ref{as:Sigma}, $\hat{\psi}_{2, n} (\bar{\phi}_{k}; \hat{\Sigma})$ is \emph{asymptotically minimax}. The same conclusions as in Theorem~\ref{thm:high level} hold by replacing $\psi (\theta)$ as $\psi (a)$, $\hat{\psi}_{1, n}$ as $\hat{\psi}_{1, n}$, and $\hat{\psi}_{2, n}$ as $\hat{\psi}_{2, n} (\bar{\phi}_{k}; \hat{\Sigma})$. The same conclusions also hold when we replace the SA model $\calP_{\SA} (\hat{a}, r_{n})$ with the assumption-lean model $\calP_{\AL} (\hat{a})$ and drop the assumption on $r_{n}$.
\end{theorem}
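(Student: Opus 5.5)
The proof has three parts. First I show minimaxity of $\hat{\psi}_{2, n} (\bar{\phi}_{k}; \hat{\Sigma})$ from Corollary~\ref{cor:ecc} and Corollary~\ref{cor:ecv2}. Second, I check that the ECV functional satisfies both parts of Definition~\ref{def:mbc}, which lets Theorem~\ref{thm:high level} apply. Third, I handle the assumption-lean case by setting $r_{n} = R^{\ast}$.

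\emph{Minimaxity.} By Corollary~\ref{cor:ecv2} and Assumption~\ref{as:Sigma},
\[
|\bias (\hat{\psi}_{2, n})| \leq \Vert \mathsf{\Pi}^{\perp} [\hat{a} - a \mid \bar{\phi}_{k}] \Vert_{2, \bbP}^{2} + \Vert \hat{\Sigma}^{-1} - \mathrm{I} \Vert_{\op} \, \alpha^{\top} \alpha \leq (1 + o (1)) \Vert \hat{a} - a \Vert_{2, \bbP}^{2} \leq (1 + o (1)) r_{n}.
\]
The variance bounds give $\var (\hat{\psi}_{2, n}) \lesssim 1/n$ uniformly over $\calP_{\SA} (\hat{a}, r_{n})$. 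Hence the maximal MSE is $\lesssim r_{n}^{2} + 1/n$, which matches the lower bound in Corollary~\ref{cor:ecc}.

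\emph{Part (1) of Definition~\ref{def:mbc}.} Write $B_{1} = \Vert \mathsf{\Pi}^{\perp} [\hat{a} - a \mid \bar{\phi}_{k}] \Vert^{2} + \alpha^{\top} \alpha$. This equals $|\bias (\hat{\psi}_{1, n})|$, which is at most $r_{n} = \sqrt{\prod_{j} r_{n, j}}$ by the convention $J = 2$. Then
\[
|\bias (\hat{\psi}_{2, n})| \leq \Vert \mathsf{\Pi}^{\perp} [\hat{a} - a \mid \bar{\phi}_{k}] \Vert^{2} + \delta_{n} \, \alpha^{\top} \alpha \leq B_{1},
\qquad \delta_{n} \coloneqq \Vert \hat{\Sigma}^{-1} - \mathrm{I} \Vert_{\op} \to 0.
\]
So the bias ratio is $\leq 1$ once $n$ is large enough that $\delta_{n} \leq 1$. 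This threshold $N$ depends only on the spectral-consistency sequence, not on the law, so it is uniform. For strictness, I use the law $a = \hat{a} + r_{n}^{1/2} \beta^{\top} \bar{\phi}_{k}$ already exhibited before the theorem. There $\mathsf{\Pi}^{\perp}$ vanishes and $|\bias (\hat{\psi}_{2, n})| \leq \delta_{n} r_{n} < r_{n} = |\bias (\hat{\psi}_{1, n})|$ whenever $\delta_{n} < 1$.

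\emph{Part (2) of Definition~\ref{def:mbc}.} I take $v = \var (\hat{\varepsilon}_{a}^{2})$, assumed bounded away from zero and infinity. Corollary~\ref{cor:ecv2} gives
\[
\var (\hat{\psi}_{2, n}) - \var (\hat{\psi}_{1, n}) \leq \frac{Ck}{n^{2}} + \frac{C}{n} \Vert \mathsf{\Pi} [\hat{a} - a \mid \bar{\phi}_{k}] \Vert^{2} + \frac{C}{n} \Vert \mathsf{\Pi} [\hat{a} - a \mid \bar{\phi}_{k}] \Vert.
\]
Since $\Vert \mathsf{\Pi} [\hat{a} - a \mid \bar{\phi}_{k}] \Vert^{2} = \alpha^{\top} \alpha \leq |\bias (\hat{\psi}_{1, n})|$, the last term is at most $C \sqrt{|\bias (\hat{\psi}_{1, n})|}/n$. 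The quadratic term is dominated by the square-root term because the bias is bounded. This is exactly \eqref{var bound}, with $k = o (n)$ so that $k/n \leq C' \eps$ for large $n$.

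\emph{Applying Theorem~\ref{thm:high level}.} With both parts verified, the theorem applies. Its clause (ii) needs a law with a large bias gap, and here that is the same perturbation law. It gives $|\bias (\hat{\psi}_{1, n})| = r_{n} \geq \sqrt{C/n}$ when $r_{n}^{2} \geq C/n$, and $|\bias (\hat{\psi}_{2, n})| \leq \delta_{n} r_{n}$.

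\emph{Main obstacle.} The proof of Theorem~\ref{thm:high level} uses $\bias (\hat{\psi}_{2, n}) = 0$ at the witness law, but here the bias is only $o (r_{n})$. So I redo that final display with squared-bias reduction $(1 - \delta_{n}^{2}) r_{n}^{2}$. The scaled MSE difference is then at most
\[
\frac{-(1 - \delta_{n}^{2}) n r_{n}^{2} + C \sqrt{r_{n}} + Ck/n}{n r_{n}^{2} + v},
\]
which is negative for large $n$ once $n r_{n}^{2} \geq C$ with $C$ taken large enough relative to $\sup_{n} \sqrt{r_{n}} + o(1)$. I also need the witness law to be a valid member of the model, meaning bounded moments of $\hat{\varepsilon}_{a}$ so that $v$ and the constants in Corollary~\ref{cor:ecv2} stay uniform. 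I expect this to be routine, since $\bar{\phi}_{k}$ has a bounded Gram matrix.

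\emph{Assumption-lean model.} Everything above holds verbatim with $r_{n} = R^{\ast}$. Then $n r_{n}^{2} \to \infty$, so clause (ii) holds automatically without any condition on $r_{n}$.
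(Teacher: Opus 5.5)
Your proposal is correct and follows the same route as the paper: minimaxity from Corollary~\ref{cor:ecv2}, verification of Definition~\ref{def:mbc} so that Theorem~\ref{thm:high level} applies, and $r_n = R^{\ast}$ for the assumption-lean model; your explicit handling of the nonzero bias $\alpha^{\top}(\hat{\Sigma}^{-1} - \mathrm{I})\alpha$ at the witness law is in fact more careful than the paper, whose Theorem~\ref{thm:high level} argument assumes $\bias(\hat{\psi}_{2,n}) = 0$ there. One small sharpening: when $n r_n^{2} \geq C$ you have $\sqrt{r_n}/(n r_n^{2}) \leq C^{-3/4} n^{-1/4} \to 0$, so clause (ii) holds for every fixed $C > 0$ and you need not take $C$ large.
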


\begin{remark}
\label{rem:Sigma_hat var} 
We note that $\bias (\hat{\psi}_{2, n} (\bar{\phi}_{k}; \hat{\Sigma}))^{2} - \bias (\hat{\psi}_{1, n})^{2} \asymp- \Vert\mathsf{\Pi}[\hat{a} - a \mid \bar{\phi}_{k}] \Vert_{2, \bbP}^{2} (1 - \Vert\hat{\Sigma}^{-1} - \mathrm{I} \Vert_{\mathrm{op}}) \asymp - \Vert \mathsf{\Pi} [\hat{a} - a \mid \bar{\phi}_{k}] \Vert_{2, \bbP}^{2}$ by Assumption~\ref{as:Sigma}. Thus, asymptotically, the second-order estimator still has smaller bias than the first-order DML estimator $\hat{\psi}_{1, n}$, and the impact of estimating $\Sigma$ is asymptotically negligible. In addition, $\hat{\psi}_{2, n} (\bar{\phi}_{k}; \hat{\Sigma})$ corrects the bias of $\hat{\psi}_{1, n}$ by estimating a lower bound of $\bias (\hat{\psi}_{1, n}) \lesssim r_{n}$.
\end{remark}

\section{Concluding Remarks}
\label{sec:sa}

The SA model introduced in \citet{balakrishnan2026fundamental} is a mathematically appealing construct that has inspired follow-up work \citep{jin2025structure, bonvini2024doubly, jin2025normal, jin2025sharp, gu2026optimally}, including our current paper. The assumption-lean model $\calP_{\AL}(\hat{\theta})$ defined in Definition~\ref{def:al}, is aligned with the goal of understanding what can be learned from a model that makes almost no assumptions. As discussed earlier, in terms of point estimation, both the first-order DML estimator $\hat{\psi}_{1, n}$ and our second-order estimator $\hat{\psi}_{2, n}$ remain minimax with rate $O (1)$ in the assumption-lean model $\calP_{\AL}(\hat{\theta})$ for all the parameters studied in \citet{balakrishnan2026fundamental}; for $\psi$ in \emph{monotone bias class}, our $\hat{\psi}_{2, n}$ continues to asymptotically dominate $\hat{\psi}_{1, n}$ in the scaled MSE.

However, statisticians care about uncertainty quantification or inference as much as or even more than point estimation. Neither the minimaxity and the asymptotic inadmissibility of $\hat{\psi}_{1, n}$ nor the minimaxity of $\hat{\psi}_{2, n}$ in the assumption-lean model $\calP_{\AL}(\hat{\theta})$ offer any guidance on how to quantify uncertainty, absent further knowledge of $\hat{\theta}$ or $\Theta$. The above argument is not new. Before \citet{balakrishnan2026fundamental}, we considered inference on $\psi$ under the assumption-lean model $\calP_{\AL} (\hat{\theta})$ in \citet{liu2020nearly} and \citet{liu2024assumption}. The former paper was discussed by the authors of \citet{balakrishnan2026fundamental}; see \citet{kennedy2020discussion} and \citet{liu2020rejoinder}. Since no uniformly consistent estimators of $\psi$ exist in model $\calP_{\AL}(\hat{\theta})$ \citep{ritov1990achieving, robins1997toward, ritov2014bayesian}, we, instead, developed valid falsification tests of the following null hypothesis for $\psi$ in the \emph{monotone bias class}:

\begin{quote}
$\calH_{0}$: \emph{The bias of the first-order DML estimator $\hat{\psi}_{1, n}$ of $\psi$ is smaller than $\delta \cdot \var^{1 / 2} (\hat{\psi}_{1, n})$ for some small $\delta > 0$, so that a standard Wald CI centered at $\hat{\psi}_{1, n}$ has an asymptotic coverage no smaller than some lower bound of the nominal coverage determined by $\delta$.}
\end{quote}

The proposed tests are only falsification tests because, although valid under $\mathcal{H}_{0}$, they will have no power under many alternatives to $\mathcal{H}_{0}$. However, when a test rejects the null, it provides empirical evidence that the bias of $\hat{\psi}_{1,n}$ is too large for the Wald CI to deliver valid inference. The test statistics used are based on the same second-order estimators that we have analyzed in this paper or their higher-order extensions \citep{robins2008higher, robins2016technical, liu2017semiparametric}. For $\psi$ belonging to the so-called mixed-bias classes (which includes the expected conditional covariance analyzed above) \citep{rotnitzky2021characterization}, in \citet{liu2020nearly} and \citet{liu2024assumption}, we showed that these tests are no longer valid under $\mathcal{H}_{0}$. However, these tests remain valid falsification tests of the so-called rate-double-robustness property, as defined in Remark~\ref{rem:high} or Remark~\ref{rem:dr}. We note that the rate-double-robustness implies that $\mathcal{H}_{0}$ is true. For this reason, complexity-reducing assumptions strong enough to imply rate-double-robustness are often made by investigators to justify the validity of their Wald CIs centering $\hat{\psi}_{1,n}$. In our view, unlike the minimaxity of $\hat{\psi}_{1,n}$ or of $\hat{\psi}_{2,n}$, these falsification tests provide further empirical information even in the assumption-lean model $\calP_{\AL}(\hat{\theta})$, whenever they reject and thus can be of value to domain scientists for whom inference is important. Finally, we remark that the uniform asymptotic inadmissibility of $\hat{\psi}_{1, n}$ in the \emph{monotone bias class}, in fact, follows from a similar intuition to that of the above falsification test procedure, a point implicitly made in Remark~1.7 of \citet{liu2020nearly}. At any finite sample size $n$, we allow $\hat{\psi}_{2, n}$ to be $\eps$-worse than $\hat{\psi}_{1, n}$ in the most pessimistic scenario, for some very small $\eps$ that depends on $n$; but as a reward, $\hat{\psi}_{2, n}$ could potentially be much better than $\hat{\psi}_{1, n}$ in certain optimistic cases, either in the structure-agnostic or assumption-lean model.

\bibliographystyle{plainnat}
\bibliography{\myreferences}

\appendix

\section{Proofs of the Main Theorems}
\label{app:proofs}

\subsection{Proof of Theorem~\ref{thm:quad gsm}}
\label{app:quad gsm}

\begin{proof}
The first statement on the minimaxity of $\hat{Q}_{2, n} (k)$ follows immediately from its bias and variance characterized in Lemma~\ref{lem:quad gsm 2}.

The second statement holds by noticing that Lemma~\ref{lem:quad gsm 2} implies that $Q (\theta)$ satisfies the condition in Definition~\ref{def:mbc}, and hence Theorem~\ref{thm:high level} applies.

The last statement has been proved in the text before Theorem~\ref{thm:quad gsm}.
\end{proof}

\subsection{Proof of Theorem~\ref{thm:quad}}
\label{app:quad}

\begin{proof}
The first statement on the minimaxity of $\hat{\psi}_{2, n} (\bar{\phi}_{k})$ follows immediately from its bias and variance characterized in Lemma~\ref{lem:quad2}.

The second statement holds by noticing that Lemma~\ref{lem:quad gsm 2} implies that $\psi (f)$ satisfies the condition in Definition~\ref{def:mbc}, and hence Theorem~\ref{thm:high level} applies.

The last statement has been proved in the text before Theorem~\ref{thm:quad}.
\end{proof}

\subsection{Proof of Theorem~\ref{thm:ecv}}
\label{app:ecv}

\begin{proof}
The first statement on the minimaxity of $\hat{\psi}_{2, n} (\bar{\phi}_{k}; \hat{\Sigma})$ follows immediately from its bias and variance characterized in Corollary~\ref{cor:ecv2}.

The second statement holds by noticing that Lemma~\ref{lem:quad gsm 2} implies that $\psi (a)$ satisfies the condition in Definition~\ref{def:mbc}, and hence Theorem~\ref{thm:high level} applies.

The last statement has been proved in the text before Theorem~\ref{thm:ecv}.
\end{proof}

\section{Discussion on the Scaled MSE Difference}
\label{app:scale}

In this section, we discuss why we choose the scaled MSE difference as the criterion in Definition~\ref{def:admissible} and discuss the consequence of using other scaling factors instead of $\mse (\hat{\psi}_{1, n})$ in all the examples analyzed in the main text. To simplify the presentation, we use $\hat{\psi}_{2, n}$ to denote a generic second-order estimator. Scaling the MSE difference by the minimax rate of $\hat{\psi}_{1, n}$ is similar to scaling the difference by the MSE of $\hat{\psi}_{1, n}$. Below we consider several other natural scaling choices.

First, we consider the unscaled MSE difference: $\mse (\hat{\psi}_{2, n}) - \mse (\hat{\psi}_{1, n})$. Without any scaling, it is not difficult to see that for all the examples in the \emph{monotone bias class} (Definition~\ref{def:mbc}), we have that as $n \to \infty$, $\mse (\hat{\psi}_{2, n}) - \mse (\hat{\psi}_{1, n})$ will always be non-positive, depending on the difference in biases. We find such a comparison overly simplifying the difference between $\hat{\psi}_{2, n}$ and $\hat{\psi}_{1, n}$ because it does not take into account the possible variance inflation when using $\hat{\psi}_{2, n}$, a disadvantage of $\hat{\psi}_{2, n}$ in particular in finite samples.

Second, we consider the MSE difference scaled by $\var (\hat{\psi}_{1, n}) \equiv v / n$. Then by the same argument as in the proof of Theorem~\ref{thm:high level}, as long as $n \geq N$, $\frac{\mse (\hat{\psi}_{2, n}) - \mse (\hat{\psi}_{1, n})}{\var (\hat{\psi}_{1, n})} \leq \eps$.

Finally, we consider the difference in MSEs, scaled only by $\bias (\hat{\psi}_{1, n})^{2}$ but not the variance. In this case, when $\bias (\hat{\psi}_{1, n})^{2} = o (1 / n)$, since $\var (\hat{\psi}_{2, n}) - \var (\hat{\psi}_{1, n})$ often has a term of order $k / n^{2} = o (1 / n)$ by the choice of $k = o (n)$, then the ratio between the variance difference and $\bias (\hat{\psi}_{1, n})^{2}$ may diverge to $+ \infty$. However, we want to emphasize that this will only happen when the first-order DML estimator $\hat{\psi}_{1, n}$ has bias faster than parametric rates. If this were true, bias alone cannot reflect the statistical behavior of $\hat{\psi}_{1, n}$ due to the $O (1 / n)$ variance, and thus this scaling choice is not very meaningful statistically.

\section{Proofs of Lemmas in the Main Text}
\label{app:details}

In this part, we record the proofs of several lemmas in the main paper. The proofs mainly involve elementary calculations related to $U$-statistics.

\subsection{Proof of Lemma~\ref{lem:quad gsm 2}}
\label{app:lem:quad gsm 2}

\begin{proof}
The bias formula of $\hat{Q}_{2, n} (k)$ is trivial. The variance formula is derived by combining the following three results. Let $Z \sim N (0, n^{-1})$. First,
\begin{align*}
\var (\hat{Q}_{1, n}) = 4 \var (\langle Y, \hat{\theta} \rangle) = \frac{4}{n} \Vert \hat{\theta} \Vert_{2}^{2}.
\end{align*}
Next,
\begin{align*}
& \ \var (\hat{Q}_{2, n} (k) - \hat{Q}_{1, n}) \\
= & \ \sum_{j = 1}^{k} \var (Y_{j, 1} Y_{j, 2} - 2 Y_{j} \hat{\theta}_{j}) \\
= & \ \sum_{j = 1}^{k} \left( \sfE^{2} [Y_{j, 1}^{2}] - \sfE^{4} [Y_{j, 1}] + 4 \hat{\theta}_{j}^{2} \var (Y_{j}) - 4 \hat{\theta}_{j} \cov (Y_{j, 1} Y_{j, 2}, Y_{j}) \right) \\
= & \ \sum_{j = 1}^{k} \left( \sfE^{2} [(Y_{j} + Z)^{2}] - \sfE^{4} [Y_{j} + Z] + 4 \hat{\theta}_{j}^{2} \frac{1}{n} - 4 \hat{\theta}_{j} \{\sfE [(Y_{j} + Z) (Y_{j} - Z) Y_{j}] - \sfE^{2} [Y_{j} + Z] \sfE [Y_{j}]\} \right) \\
= & \ \sum_{j = 1}^{k} \left\{ \left( \frac{2}{n} + \theta_{j}^{2} \right)^{2} - \theta_{j}^{4} + \frac{4}{n} \hat{\theta}_{j}^{2} - 4 \hat{\theta}_{j} \left( \sfE [Y_{j}^{3}] - \sfE [Z^{2}] \theta_{j} - \theta_{j}^{3} \right) \right\} \\
= & \ \sum_{j = 1}^{k} \left\{ \frac{4}{n^{2}} + \frac{4}{n} \theta_{j}^{2} + \frac{4}{n} \hat{\theta}_{j}^{2} - 4 \hat{\theta}_{j} \left( \frac{3}{n} \theta_{j} - \frac{1}{n} \theta_{j} \right) \right\} \\
= & \ \frac{4 k}{n^{2}} + \frac{4}{n} \sum_{j = 1}^{k} (\hat{\theta}_{j} - \theta_{j})^{2} \equiv \frac{4 k}{n^{2}} + \frac{4}{n} \Vert \sfPi_{k} (\hat{\theta} - \theta) \Vert_{2}^{2}.
\end{align*}
Finally,
\begin{align*}
& \ 2 \cov (\hat{Q}_{1, n}, \hat{Q}_{2, n} (k) - \hat{Q}_{1, n}) \\
= & \ 2 \sum_{j = 1}^{k} \cov \left( \langle Y, \hat{\theta} \rangle, Y_{j, 1} Y_{j, 2} - 2 Y_{j} \hat{\theta}_{j} \right) \\
= & \ 2 \sum_{j = 1}^{k} \cov \left( Y_{j} \hat{\theta}_{j}, Y_{j}^{2} - Z^{2} - 2 Y_{j} \hat{\theta}_{j} \right) \\
= & \ 2 \sum_{j = 1}^{k} \hat{\theta}_{j} \left\{ \theta_{j}^{3} + \frac{3}{n} \theta_{j} - \theta_{j} \left( \theta_{j}^{2} + \frac{1}{n} \right) - \frac{2}{n} \hat{\theta}_{j} \right\} \\
= & \ \frac{4}{n} \sum_{j = 1}^{k} \hat{\theta}_{j} (\theta_{j} - \hat{\theta}_{j}) = \frac{4}{n} \langle \sfPi_{k} \theta, \sfPi_{k} \hat{\theta} \rangle - \frac{4}{n} \Vert \sfPi_{k} \hat{\theta} \Vert_{2}^{2}.
\end{align*}
\end{proof}

\subsection{Proof of Lemma~\ref{lem:quad2}}
\label{app:lem:quad2}

\begin{proof}
The bias formula of $\hat{\psi}_{2, n} (\bar{\phi}_{k})$ is trivial. Recall that $\eta = \int \bar{\phi}_{k} (x) f (x) \diff x$ and $\hat{\eta} = \int \bar{\phi}_{k} (x) \hat{f} (x) \diff x$. The variance formula is derived by combining the following three results. First,
\begin{align*}
\var (\hat{\psi}_{1, n}) = \frac{4}{n} \var [\hat{f} (X)].
\end{align*}
Next, we have
\begin{align*}
& \ \var \Big( \bbU_{n, 2} \Big[ \Big( \bar{\phi}_{k} (X_{1}) - \int \bar{\phi}_{k} (x) \hat{f} (x) \diff x \Big)^{\top} \Big( \bar{\phi}_{k} (X_{2}) - \int \bar{\phi}_{k} (x) \hat{f} (x) \diff x \Big) \Big] \Big) \\
= & \ \frac{2}{n (n - 1)} \Big\{ \Tr (\Sigma^{2}) - 4 \hat{\eta}^{\top} \Sigma \eta + 2 \hat{\eta}^{\top} \Sigma \hat{\eta} + 2 \hat{\eta}^{\top} \hat{\eta} \cdot \eta^{\top} \eta + 2 (\hat{\eta}^{\top} \eta)^{2} - 4 \hat{\eta}^{\top} \hat{\eta} \cdot \hat{\eta}^{\top} \eta + (\hat{\eta}^{\top} \hat{\eta})^{2} \Big\} \\
& + \frac{4 n - 8}{n (n - 1)} (\hat{\eta} - \eta)^{\top} \Big( \Sigma - \hat{\eta} \eta^{\top} - \eta \hat{\eta}^{\top} + \hat{\eta} \hat{\eta}^{\top} \Big) (\hat{\eta} - \eta).
\end{align*}
Finally, we can compute the covariance term:
\begin{align*}
& \ 2 \cov \Big( \bbU_{n, 1} [2 \hat{f} (X)], \bbU_{n, 2} \Big[ \Big( \bar{\phi}_{k} (X_{1}) - \int \bar{\phi}_{k} (x) \hat{f} (x) \diff x \Big)^{\top} \Big( \bar{\phi}_{k} (X_{2}) - \int \bar{\phi}_{k} (x) \hat{f} (x) \diff x \Big) \Big] \Big) \\
= & \ \frac{8}{n} \Big( \int \bar{\phi}_{k} (x) f (x) \hat{f} (x) \diff x - \hat{\eta} \Big)^{\top} (\eta - \hat{\eta}).
\end{align*}
\end{proof}

\subsection{Proof of Lemma~\ref{lem:ecc2}}
\label{app:lem:ecc2}

In the proof, we only consider the case where $\Sigma$ is replaced by a generic estimator $\hat{\Sigma}$ treated as fixed, just as $\hat{a}$ and $\hat{b}$. To facilitate the proof, we let $\hat{\psi}_{2, n} (\bar{\phi}_{k}; \hat{\Sigma}) \equiv \hat{\psi}_{1, n} + \hat{U}_{2, n} (\bar{\phi}_{k}; \hat{\Sigma})$.

\begin{proof}
The bias of $\hat{\psi}_{2, n} (\bar{\phi}_{k}; \hat{\Sigma})$ is relatively simple to compute.
\begin{align*}
& \bias (\hat{\psi}_{2, n} (\bar{\phi}_{k}; \hat{\Sigma})) \\
& = \bias (\hat{\psi}_{1, n}) + \sfE [\sfPi (a - \hat{a} \mid \bar{\phi}_{k}) (X) \sfPi (b - \hat{b} \mid \bar{\phi}_{k}) (X)] + \alpha^{\top} (\hat{\Sigma}^{-1} - \I) \beta \\
& = - \sfE [\sfPi^{\perp} (a - \hat{a} \mid \bar{\phi}_{k}) (X) \sfPi^{\perp} (b - \hat{b} \mid \bar{\phi}_{k}) (X)] + \alpha^{\top} (\hat{\Sigma}^{-1} - \I) \beta \\
& \equiv - \langle \sfPi^{\perp} (a - \hat{a} \mid \bar{\phi}_{k}), \sfPi^{\perp} (b - \hat{b} \mid \bar{\phi}_{k}) \rangle_{\bbP} + \alpha^{\top} (\hat{\Sigma}^{-1} - \I) \beta.
\end{align*}
For variance, we directly compute each term in the variance decomposition: $\var \{\hat{\psi}_{2, n} (\bar{\phi}_{k})\} \equiv \var (\hat{\psi}_{1, n}) + \var \{\hat{U}_{n, 2} (\bar{\phi}_{k})\} + 2 \cov \{\hat{\psi}_{1, n}, \hat{U}_{n, 2} (\bar{\phi}_{k})\}$. We first compute the variance of $\hat{\psi}_{1, n}$.
\begin{align*}
\var (\hat{\psi}_{1, n}) & = \frac{1}{n} \var \{(A - \hat{a} (X)) (Y - \hat{b} (X))\} \\
& = \frac{1}{n} [\sfE \{(A - \hat{a} (X))^{2} (Y - \hat{b} (X))^{2}\} - \sfE^{2} \{(A - \hat{a} (X)) (Y - \hat{b} (X))\}].
\end{align*}
We then compute the variance of the bias correction term.
\begin{align*}
& \ \var \{\hat{U}_{n, 2} (\bar{\phi}_{k}; \hat{\Sigma})\} \\
= & \ \var \left( \bbU_{n, 2} [(A_{1} - \hat{a} (X_{1})) \bar{\phi}_{k} (X_{1})^{\top} \hat{\Sigma}^{-1} \bar{\phi}_{k} (X_{2}) (Y_{2} - \hat{b} (X_{2})] \right) \\
= & \ \frac{1}{n^{2} (n - 1)^{2}} \sum_{1 \leq i_{1} \neq i_{2} \leq n} \var [(A_{i_{1}} - \hat{a} (X_{i_{1}})) \bar{\phi}_{k} (X_{i_{1}})^{\top} \hat{\Sigma}^{-1} \bar{\phi}_{k} (X_{i_{2}}) (Y_{i_{2}} - \hat{b} (X_{i_{2}}))] \\
& + \frac{1}{n^{2} (n - 1)^{2}} \sum_{1 \leq i_{1} \neq i_{2} \leq n} \cov \left[ \begin{array}{c}
(A_{i_{1}} - \hat{a} (X_{i_{1}})) \bar{\phi}_{k} (X_{i_{1}})^{\top} \hat{\Sigma}^{-1} \bar{\phi}_{k} (X_{i_{2}}) (Y_{i_{2}} - \hat{b} (X_{i_{2}})), \\
(Y_{i_{1}} - \hat{b} (X_{i_{1}})) \bar{\phi}_{k} (X_{i_{1}})^{\top} \hat{\Sigma}^{-1} \bar{\phi}_{k} (X_{i_{2}}) (A_{i_{2}} - \hat{a} (X_{i_{2}}))
\end{array} \right] \\
& + \frac{1}{n^{2} (n - 1)^{2}} \sum_{1 \leq i_{1} \neq i_{2} \neq i_{3} \leq n} \cov \left[ \begin{array}{c}
(A_{i_{1}} - \hat{a} (X_{i_{1}})) \bar{\phi}_{k} (X_{i_{1}})^{\top} \hat{\Sigma}^{-1} \bar{\phi}_{k} (X_{i_{2}}) (Y_{i_{2}} - \hat{b} (X_{i_{2}})), \\
(A_{i_{1}} - \hat{a} (X_{i_{1}})) \bar{\phi}_{k} (X_{i_{1}})^{\top} \hat{\Sigma}^{-1} \bar{\phi}_{k} (X_{i_{3}}) (Y_{i_{3}} - \hat{b} (X_{i_{3}}))
\end{array} \right] \\
& + \frac{1}{n^{2} (n - 1)^{2}} \sum_{1 \leq i_{1} \neq i_{2} \neq i_{3} \leq n} \cov \left[ \begin{array}{c}
(A_{i_{1}} - \hat{a} (X_{i_{1}})) \bar{\phi}_{k} (X_{i_{1}})^{\top} \hat{\Sigma}^{-1} \bar{\phi}_{k} (X_{i_{2}}) (Y_{i_{2}} - \hat{b} (X_{i_{2}})), \\
(A_{i_{3}} - \hat{a} (X_{i_{3}})) \bar{\phi}_{k} (X_{i_{3}})^{\top} \hat{\Sigma}^{-1} \bar{\phi}_{k} (X_{i_{2}}) (Y_{i_{2}} - \hat{b} (X_{i_{2}}))
\end{array} \right] \\
& + \frac{2}{n^{2} (n - 1)^{2}} \sum_{1 \leq i_{1} \neq i_{2} \neq i_{3} \leq n} \cov \left[ \begin{array}{c}
(A_{i_{1}} - \hat{a} (X_{i_{1}})) \bar{\phi}_{k} (X_{i_{1}})^{\top} \hat{\Sigma}^{-1} \bar{\phi}_{k} (X_{i_{2}}) (Y_{i_{2}} - \hat{b} (X_{i_{2}})), \\
(A_{i_{2}} - \hat{a} (X_{i_{2}})) \bar{\phi}_{k} (X_{i_{2}})^{\top} \hat{\Sigma}^{-1} \bar{\phi}_{k} (X_{i_{3}}) (Y_{i_{3}} - \hat{b} (X_{i_{3}}))
\end{array} \right] \\
= & \ \frac{1}{n (n - 1)} \sfE \left[ (A_{1} - \hat{a} (X_{1}))^{2} (Y_{2} - \hat{b} (X_{2}))^{2} \bar{\phi}_{k} (X_{1})^{\top} \hat{\Sigma}^{-1} \bar{\phi}_{k} (X_{2}) \bar{\phi}_{k} (X_{2})^{\top} \hat{\Sigma}^{-1} \bar{\phi}_{k} (X_{1}) \right] \\
& + \frac{1}{n (n - 1)} \sfE \left[ (A_{1} - \hat{a} (X_{1})) (Y_{1} - \hat{b} (X_{1})) (A_{2} - \hat{a} (X_{2})) (Y_{2} - \hat{b} (X_{2})) \bar{\phi}_{k} (X_{1})^{\top} \hat{\Sigma}^{-1} \bar{\phi}_{k} (X_{2}) \bar{\phi}_{k} (X_{2})^{\top} \hat{\Sigma}^{-1} \bar{\phi}_{k} (X_{1}) \right] \\
& + \frac{n - 2}{n (n - 1)} \sfE \left[ (A_{1} - \hat{a} (X_{1}))^{2} (Y_{2} - \hat{b} (X_{2})) (Y_{3} - \hat{b} (X_{3})) \bar{\phi}_{k} (X_{2})^{\top} \hat{\Sigma}^{-1} \bar{\phi}_{k} (X_{1}) \bar{\phi}_{k} (X_{1})^{\top} \hat{\Sigma}^{-1} \bar{\phi}_{k} (X_{3}) \right] \\
& + \frac{n - 2}{n (n - 1)} \sfE \left[ (Y_{1} - \hat{b} (X_{1}))^{2} (A_{2} - \hat{a} (X_{2})) (A_{3} - \hat{a} (X_{3})) \bar{\phi}_{k} (X_{2})^{\top} \hat{\Sigma}^{-1} \bar{\phi}_{k} (X_{1}) \bar{\phi}_{k} (X_{1})^{\top} \hat{\Sigma}^{-1} \bar{\phi}_{k} (X_{3}) \right] \\
& + \frac{2 (n - 2)}{n (n - 1)} \sfE \left[ (Y_{1} - \hat{b} (X_{1})) (A_{1} - \hat{a} (X_{1})) (Y_{2} - \hat{b} (X_{2})) (A_{3} - \hat{a} (X_{3})) \bar{\phi}_{k} (X_{2})^{\top} \hat{\Sigma}^{-1} \bar{\phi}_{k} (X_{1}) \bar{\phi}_{k} (X_{1})^{\top} \hat{\Sigma}^{-1} \bar{\phi}_{k} (X_{3}) \right] \\
& - \left( \frac{2}{n (n - 1)} + \frac{4 (n - 2)}{n (n - 1)} \right) \left\{ \sfE \left[ (A - \hat{a} (X)) \bar{\phi}_{k} (X)^{\top} \right] \hat{\Sigma}^{-1} \sfE \left[ \bar{\phi}_{k} (X) (Y - \hat{b} (X)) \right] \right\}^{2} \\
= & \ \frac{1}{n (n - 1)} \Tr \left\{ \Sigma_{a, a} \hat{\Sigma}^{-1} \Sigma_{b, b} \hat{\Sigma}^{-1} + \left( \Sigma_{a, b} \hat{\Sigma}^{-1} \right)^{2} \right\} \\
& + \frac{n - 2}{n (n - 1)} \sfE [(Y - \hat{b} (X)) \bar{\phi}_{k} (X)^{\top}] \hat{\Sigma}^{-1} \Sigma_{a, a} \hat{\Sigma}^{-1} \sfE [\bar{\phi}_{k} (X) (Y - \hat{b} (X))] \\
& + \frac{n - 2}{n (n - 1)} \sfE [(A - \hat{a} (X)) \bar{\phi}_{k} (X)^{\top}] \hat{\Sigma}^{-1} \Sigma_{b, b} \hat{\Sigma}^{-1} \sfE [\bar{\phi}_{k} (X) (A - \hat{a} (X))] \\
& + \frac{2 (n - 2)}{n (n - 1)} \sfE [(A - \hat{a} (X)) \bar{\phi}_{k} (X)^{\top}] \hat{\Sigma}^{-1} \Sigma_{a, b} \hat{\Sigma}^{-1} \sfE [\bar{\phi}_{k} (X) (Y - \hat{b} (X))] \\
& - \frac{2 (2 n - 3)}{n (n - 1)} \left\{ \sfE \left[ (A - \hat{a} (X)) \bar{\phi}_{k} (X)^{\top} \right] \hat{\Sigma}^{-1} \sfE \left[ \bar{\phi}_{k} (X) (Y - \hat{b} (X)) \right] \right\}^{2} \\
= & \ \frac{1}{n (n - 1)} \Tr \left\{ \Sigma_{a, a} \hat{\Sigma}^{-1} \Sigma_{b, b} \hat{\Sigma}^{-1} + \left( \Sigma_{a, b} \hat{\Sigma}^{-1} \right)^{2} \right\} \\
& + \frac{n - 2}{n (n - 1)} \left( \alpha^{\top} \hat{\Sigma}^{-1} \Sigma_{b, b} \hat{\Sigma}^{-1} \alpha + \beta^{\top} \hat{\Sigma}^{-1} \Sigma_{a, a} \hat{\Sigma}^{-1} \beta + 2 \alpha^{\top} \hat{\Sigma}^{-1} \Sigma_{a, b} \hat{\Sigma}^{-1} \beta \right) \\
& - \frac{2 (2 n - 3)}{n (n - 1)} (\alpha^{\top} \hat{\Sigma}^{-1} \beta)^{2}.
\end{align*}
Finally, we compute the covariance term.
\begin{align*}
& \ 2 \cov \left[ \hat{\psi}_{1, n}, \hat{U}_{n, 2} (\bar{\phi}_{k}) \right] \\
= & \ \cov \left[ \frac{1}{n} \sum_{i = 1}^{n} (A_{i} - \hat{a} (X_{i})) (Y_{i} - \hat{b} (X_{i})), \frac{1}{n (n - 1)} \sum_{1 \leq i_{1} \neq i_{2} \leq n} (A_{i_{1}} - \hat{a} (X_{i_{1}})) \bar{\phi}_{k} (X_{i_{1}})^{\top} \hat{\Sigma}^{-1} \bar{\phi}_{k} (X_{i_{2}}) (Y_{i_{2}} - \hat{b} (X_{i_{2}})) \right] \\
= & \ \frac{2}{n^{2} (n - 1)} \sum_{i = 1}^{n} \sum_{1 \leq i_{1} \neq i_{2} \leq n} \cov \left[ (A_{i} - \hat{a} (X_{i})) (Y_{i} - \hat{b} (X_{i})), (A_{i_{1}} - \hat{a} (X_{i_{1}})) \bar{\phi}_{k} (X_{i_{1}})^{\top} \hat{\Sigma}^{-1} \bar{\phi}_{k} (X_{i_{2}}) (Y_{i_{2}} - \hat{b} (X_{i_{2}})) \right] \\
= & \ \frac{2}{n^{2} (n - 1)} \sum_{1 \leq i_{1} \neq i_{2} \leq n} \cov \left[ (A_{i_{1}} - \hat{a} (X_{i_{1}})) (Y_{i_{1}} - \hat{b} (X_{i_{1}})), (A_{i_{1}} - \hat{a} (X_{i_{1}})) \bar{\phi}_{k} (X_{i_{1}})^{\top} \hat{\Sigma}^{-1} \bar{\phi}_{k} (X_{i_{2}}) (Y_{i_{2}} - \hat{b} (X_{i_{2}})) \right] \\
& + \frac{2}{n^{2} (n - 1)} \sum_{1 \leq i_{1} \neq i_{2} \leq n} \cov \left[ (A_{i_{2}} - \hat{a} (X_{i_{2}})) (Y_{i_{2}} - \hat{b} (X_{i_{2}})), (A_{i_{1}} - \hat{a} (X_{i_{1}})) \bar{\phi}_{k} (X_{i_{1}})^{\top} \hat{\Sigma}^{-1} \bar{\phi}_{k} (X_{i_{2}}) (Y_{i_{2}} - \hat{b} (X_{i_{2}})) \right] \\
= & \ \frac{2}{n} \left\{ \begin{array}{c}
\sfE [(A - \hat{a} (X)) (Y - \hat{b} (X))^{2} \bar{\phi}_{k} (X)^{\top}] \hat{\Sigma}^{-1} \alpha + \sfE [(A - \hat{a} (X))^{2} (Y - \hat{b} (X)) \bar{\phi}_{k} (X)^{\top}] \hat{\Sigma}^{-1} \beta \\
- \, 2 \sfE [(A - \hat{a} (X)) (Y - \hat{b} (X))] \alpha^{\top} \hat{\Sigma}^{-1} \beta
\end{array} \right\}.
\end{align*}
\end{proof}

\end{document}